\newcommand{\N}{\mathbb{N}}
\newtheorem{theorem}{Theorem}[section]
\newtheorem{lemma}[theorem]{Lemma}
\newtheorem{prop}[theorem]{Proposition}
\newtheorem{conjecture}[theorem]{Conjecture}
\theoremstyle{definition}
\newtheorem{remark}[theorem]{Remark}
\let\orgdescriptionlabel\descriptionlabel
\renewcommand*{\descriptionlabel}[1]{%
  \let\orglabel\label
  \let\label\@gobble
  \phantomsection
  \edef\@currentlabel{#1}%
  \let\label\orglabel
  \orgdescriptionlabel{#1}%
}
\renewcommand{\P}{\mathbb{P}}
\newcommand{\x}{{\boldsymbol{x}}}
\newcommand{\y}{{\boldsymbol{y}}}
\newcommand{\1}{\mathbbm{1}}
\newcommand{\G}{\mathscr{G}}
\newcommand{\C}{\mathscr{C}}
\newcommand{\X}{\boldsymbol{X}}
\newcommand{\E}{\mathbb{E}}
\newcommand{\0}{\mathbf{o}}
\renewcommand{\o}{\0}
\newcommand{\R}{\mathbb{R}}
\renewcommand{\d}{\mathrm{d}}
\newcommand{\z}{{\boldsymbol{z}}}
\newcommand{\U}{\mathscr{U}}
\newcommand{\cX}{\mathscr{X}}
\newcommand{\cG}{\mathscr{G}}
\newcommand{\M}{\mathscr{D}^d}
\newcommand{\scN}{\mathscr{N}}
\newcommand{\cF}{\mathcal{F}}
\newcommand{\cE}{\mathcal{E}}
\newcommand{\calG}{\mathcal{G}}
\newcommand{\cH}{\mathcal{H}}
\newcommand{\cI}{\mathcal{I}}
\title{Cluster sizes in subcritical soft Boolean models}
\author{
Benedikt Jahnel \orcidlink{0000-0002-4212-0065}\thanks{Technische Universit\"at Braunschweig, Universit\"atsplatz 2, 38106 Braunschweig, Germany}\thanksgap{0.4ex} \thanks{Weierstrass Institute for Applied Analysis and Stochastics, Anton-Wilhelm-Amo-Str.~39, 10117 Berlin, Germany} \\ benedikt.jahnel@tu-braunschweig.de
\and
Lukas L\"{u}chtrath \orcidlink{0000-0003-4969-806X}\thanksmark{2}\\lukas.luechtrath@wias-berlin.de \\
\and
Marcel Ortgiese \orcidlink{0000-0001-7803-8584}\thanks{University of Bath, Claverton Down, Bath BA2 7AY, United Kingdom} \\ m.ortgiese@bath.ac.uk 
}
\date{June 15, 2026}
\begin{document}

\maketitle
\begin{spacing}{0.9}
\begin{abstract} 
\noindent 
We consider the soft Boolean model, a model that interpolates between the Boolean model and long-range percolation, where vertices are given via a stationary Poisson point process. Each vertex carries an independent Pareto-distributed radius and each pair of vertices is assigned another independent Pareto weight with a potentially different tail exponent. Two vertices are now connected if they are within distance of the larger radius multiplied by the edge weight. We determine the tail behaviour of the Euclidean diameter and the number of points of a typical maximally connected component in a subcritical percolation phase. 
For this, we present a sharp criterion in terms of the tail exponents of the edge-weight and radius distributions that distinguish a regime where the tail behaviour is controlled only by the edge exponent from a regime in which both exponents are relevant. 
Our proofs rely on fine path-counting arguments identifying the precise order of decay of the probability that far-away vertices are connected.

\smallskip
\noindent\footnotesize{{\textbf{AMS-MSC 2020}: 60K35, 05C80}

\smallskip
\noindent\textbf{Key Words}: Continuum percolation, Boolean model, random connection model, Poisson point process, long-range percolation, typical cluster, diameter, number of points}
\end{abstract}
\end{spacing}

%%%%%%%%%%%%%%%%%%%%%%%%%%%%%%%%%%%%%%%%
%%%%%%%%%% Introduction %%%%%%%%%%%%%%%%
%%%%%%%%%%%%%%%%%%%%%%%%%%%%%%%%%%%%%%%%

\section{Introduction} \label{sec:Intro}
Sparse random-graph models that combine \emph{heavy-tailed degree distributions} and \emph{long-range effects} have attracted a lot of attention in recent years due to their ability to describe many real-world networks.
For these random graphs vertices are embedded into the $d$-dimensional  Euclidean space via a homogeneous {\em Poisson point process} with unit intensity and additionally each vertex is assigned an i.i.d.~weight or \emph{interaction radius}. Given the vertices and their radii, edges are then drawn independently such that edges to vertices with a large radius or to vertices nearby are more probable. The overall number of edges is controlled by an intensity parameter \(\beta>0\)  that influences the connectivity in such a way that increasing \(\beta\) leads to more edges on average. Although short edges are more likely, the connection mechanism still allows edges between arbitrarily far-apart vertices to occur occasionally. An important question then is whether there exists a non-trivial critical intensity \(\beta_c\) such that the graph contains an infinite connected component with positive probability if \(\beta>\beta_c\) but does not if \(\beta<\beta_c\). The ergodicity entailed in the model ensures that the existence of an infinite connected component is a \(0\)-\(1\)-event and any existing infinite connecting component is unique~\cite{BurtonKeane89,JacobMoerters2017,chebuninLast2025}. {If they exist,} we call \((0,\beta_c)\) the \emph{subcritical phase} and \((\beta_c,\infty)\) the \emph{supercritical phase}. {For many models the existence of these phases has been shown, and one aims for more detailed results about the behaviour of the graph within each of those phases as a next step to gain a deeper structural understanding of the model.}
 
This article is devoted to the analysis of the tail distribution of the Euclidean diameter as well as the number of points of the typical connected component in the \emph{soft Boolean model} in the subcritical regime. This random graph was introduced in~\cite{GGM22} as a model that interpolates between the \emph{(Poisson) Boolean model} and the \emph{long-range random connection model}, arguably the two most studied models in the continuum-percolation literature. 
{The regimes, where there exists a subcritical phase in this model, can be described in terms of the radius distribution and the long-range effects
and are precisely identified in~\cite{GLM2021}, and we study the model in such a regime at low intensity}. 
For the diameter of the typical cluster in the soft Boolean model, we identify the decay exponent for the whole parameter regime where a subcritical phase exists and find a sharp criterion when the long-range effects, or the radii, or the combination of both determine the decay. For the number of points in the typical cluster we derive complementary results, at least in parts of the parameter regime, that show a strong qualitative difference to the behaviour of the diameter. 

The idea of constructing graphs on random point clouds in the continuum and then connecting the vertices based on their distances goes back to~\cite{Gilbert61}, where a model is proposed in which any two Poisson points are connected by an edge whenever their distance is smaller than a threshold \(\beta>0\), which can be seen as a continuum equivalent to nearest-neighbour percolation on the lattice. This model is commonly referred to as the \emph{Gilbert graph} or \emph{random geometric graph} \cite{Penrose2003}. The connection rule can be also seen as assigning a ball of radius \(\beta/2\) to each vertex and connecting any pair of vertices whose balls intersect. Later on, this idea was generalised to random radii in the physics literature~\cite{PikeSeager1974,KerteszVicsek1982} and rigorously introduced in~\cite{Hall85}. Nowadays, this model is commonly known under the name \emph{(Poisson) Boolean model}. It is a, by now, classical result that there always is a non-trivial phase transition in dimensions \(d\geq 2\), and no supercritical phase in dimension \(d=1\), in both cases, whenever the radius distribution has finite \(d\)-th moment~\cite{Hall85,MeesterRoy1994}. This especially includes heavy-tailed radius distributions which are of particular interest to us as they lead to heavy-tailed (or scale-free) degree distributions. In 2008, {the existence of a subcritical phase was shown in~\cite{Gouere08} based on the crossing of large annuli. This alternative description led to an alternative} critical intensity \(\widehat{\beta}_c\leq\beta_c\). {The advantage of that second, potentially smaller, subcritical phase is that the description via annulus crossings allows for renormalisation arguments and therefore for quantitative results. In particular, it was shown that} the Euclidean diameter of the typical connected component has finite \(s\)-th moment if and only if the radius distribution has finite \((d+s)\)-th moment whenever \(\beta<\widehat{\beta}_c\) and the same holds for the number of points in the typical cluster~\cite{JahnelAndrasCali2022, Gouere08}. For the planar Boolean model in \(d=2\), it was shown that \(\beta_c=\widehat{\beta}_c\) for all radius distributions with finite \(d\)-th moment in~\cite{AhlbergTassionTeixeira2018}. Under additional moment conditions, this was generalised to all dimensions \(d\geq 2\) via randomised algorithms and the OSSS inequality in~\cite{DCopinRaoufiTassion2020}. This result was extended to the class of Pareto distributed radii with some technical exceptions in~\cite{DembinTassion2022}. {Combined, these articles show that the subcritical phase in the Boolean model is appropriately explained by considering crossing events for large annuli.} 

Another generalisation of Gilbert's model is to replace the hard-threshold condition and instead connect any pair of vertices with a probability determined by a non-increasing function of the distance of the vertices. This was introduced in continuum percolation under the name \emph{(Poisson) random connection model} in~\cite{Penrose1991, Penrose1996} and further studied in~\cite{Meester1995,MeesterPenroseSarkar1997}. The generalised connection mechanism particularly includes connection functions that decay polynomially. Since we are particularly interested in these long-range effects coming from a connection function that decays polynomially in the distance, we will stick with the name of long-range percolation throughout the paper. The idea of connecting any pair of vertices with a probability given by a negative power of their spatial distance was first introduced in~\cite{Schulman1983} as a lattice model that exhibits a non-trivial percolation phase transition even in dimension \(d=1\). More precisely, any pair of lattice points is connected independently with a probability decaying polynomially in the distance of the vertices with exponent \(d\delta\), for some \(\delta>1\), and there is a non-trivial phase transition \(\beta_c\in(0,\infty)\) in dimension \(d=1\) if and only if \(\delta\leq 2\), see~\cite{NewmanSchulman1986,DCGT2019}. Moreover, for \(\delta=2\), the percolation function is discontinuous~\cite{AizenmanNewman1986}, which is rather atypical. This generalises to the continuum version. It was shown that there always exists a non-trivial phase transition in \(d\geq 2\) as long as the connection function is integrable~\cite{MeesterRoy1996}. Further, in the whole subcritical regime the expected number of points in a typical connected component is finite~\cite{Meester1995}, which is sometimes also referred to as a \emph{sharp phase transition}. {However, unlike in the Boolean model, the subcritical phase of this model cannot be appropriately described via the crossings of large annuli~\cite{jacobJahLu2024}. Indeed, for instance, if \(\delta<2\) and \(\beta<\beta_c\), then the model is subcritical but still the annulus between balls of radius \(r\) and \(2r\) is crossed with high probability, as all edges are independent and there are roughly of order \(r^{2d}\) pairs of vertices at distance \(r\), each sharing an edge with probability \(r^{-d\delta}\).} For a detailed overview of the random connection model, we refer the reader to~\cite{MeesterRoy1996}.

%%%%%%%%%%%%%%%%%%%%%%%%%%%%%%%%
%%%%%%%% Related Work %%%%%%%%%%
%%%%%%%%%%%%%%%%%%%%%%%%%%%%%%%%
Besides the soft Boolean model, other models combining heavy-tailed degree distributions and long-range effects and their percolation behaviour have been studied recently in the literature. Examples include~\emph{scale-free percolation}~\cite{DeijfenHofstadHooghiemstra2013} and its continuum version~\cite{DeprezWuthrich2019}, also studied under the name \emph{geometric inhomogeneous random graphs}~\cite{BringmannKeuschLengler2017,BringmannKeuschLengler2019}. Here, the radii play the role of weights and
two vertices are connected with a probability that decays polynomially in their distance divided by the product of both weights. This model also appears as a generalised weak local limit of \emph{hyperbolic random graph models}~\cite{KrioukovEtAl2010}, cf.~\cite{KomjathyLodewijks2020}. Another model is the \emph{age-dependent random connection model}~\cite{GGLM2019} which appears as the weak local limit of a preferential-attachment type model, where the influence of a vertex is determined by its age. These models, together with the soft Boolean model, and the above mentioned classical continuum percolation models as well as the \emph{ultra-small scale-free geometric network}~\cite{Yukich2006} are contained in the class of \emph{weight-dependent random connection models}, introduced in~\cite{GHMM2022}. The existence of subcritical and supercritical percolation phases for these models are shown in~\cite{GLM2021,GraLuMo2022,jacobJahLu2024}. In~\cite{GGM22,luechtrath24}, the graph distances in supercritical phases are identified. In~\cite{Moench2023} it is shown that, for strong enough long-range effects, the percolation function on weight-dependent random connection models is continuous, a property that is believed to be generally true.  
Closely related to the weight-dependent random connection model, only using a different parametrisation, are the recently introduced \emph{kernel-based spatial random graphs}~\cite{JorritsmaMitscheKomjathy2023}. In that article, the authors show, under some additional assumptions, that the tail of the cluster-size distribution of a finite component in a supercritical regime decays stretched exponentially.

%%%%%%%%%%%%%%%%%%%%%%%%%%%%%%%%
%%%%%%%% Organisation %%%%%%%%%%
%%%%%%%%%%%%%%%%%%%%%%%%%%%%%%%%
\medskip
Our article is organised as follows: In the next section, we give a description of the soft Boolean model and our main results. Our primary result, Theorem~\ref{thm:MetaMain}, describes the asymptotic behaviour of the Euclidean diameter.
%, {and we give some interpretations of the result.} 
Additionally, we derive results on the cardinality of the component of the origin, see Theorem~\ref{thm:NumberPoints}, in comparison. In Section~\ref{sec:Construction}, we give a formal construction of the model. We explain the proof strategy in Section~\ref{sec:Strategy}. In Section~\ref{sec:openProblems}, we further discuss our results, compare it with known results and elaborate on some open problems. We present the proofs of our results in Section~\ref{sec:proofs}. The main contribution here is the proof of the upper bound of the Euclidean diameter in Section~\ref{sec:MainThmLGupper}. Due to the number of long edges in our model in addition to the inhomogeneity coming from the  radii of the vertices, classical renormalisation arguments for the Boolean model or long-range percolation cannot be applied. Instead we rely on fine moment bounds applied to carefully chosen paths to derive our results.   

%%%%%%%%%%%%%%%%%%%%%%%%%%%%%%%%%%%
%%%%%% Model and main result%%%%%%%
%%%%%%%%%%%%%%%%%%%%%%%%%%%%%%%%%%%
\section{Setting and main results} \label{sec:Result}
\subsection{Description of the model}

We now introduce the soft Boolean model~\cite{GGM22} that combines both heavy-tailed degree distributions and long-range effects. 
The vertex set is given by a homogeneous Poisson point process \(X\) in \(\R^d\) of intensity one. Note that fixing the intensity is no restriction for our results by rescaling. Next, we assign to each vertex \(x\in X\) an independent radius \(R_x\) distributed according to a Pareto distribution with tail exponent~\(d/\gamma\) for some \(\gamma\in(0,1)\), that is \(\P(R_x>r)=1\wedge r^{-d/\gamma}\). Further, given \(X\), we assign to each pair of distinct vertices \(\{x,y\}\subset X\) an independent edge weight \(W(x,y)\), which is also Pareto distributed with tail exponent \(d\delta\) for some \(\delta>1\). Then, given \(X\) and the collection of radii and edge weights, each pair of vertices \(x,y\in X\) is connected by an edge if and only if
\begin{equation}\label{eq:defSoftBool} 
    |x-y|\leq \beta^{1/d} W(x,y)\big(R_x\vee R_y\big),
\end{equation}
where \(\beta>0\) is the edge intensity, and \(|\cdot|\) denotes the Euclidean norm. In words, two vertices are connected if the vertex with the smaller radius, say \(x\), is contained in the {enlarged} ball of radius \(\beta^{1/d} W(x,y)R_y\) around the vertex with larger radius \(y\), see Figure~\ref{fig:SoftBool}. We denote the resulting graph by \(\G^\beta\). {The reason for taking the \(1/d\)-th power of \(\beta\) lies in the construction given in Section~\ref{sec:Construction} that is based on the \emph{volume} of the associated balls instead of their radii.
One of the advantages of this construction is that the volume 
translates directly to the degree distribution of the graph, which plays 
an important role, as we will see below.}

\begin{figure}
\begin{center}
\resizebox{1\textwidth}{!}{
\begin{tikzpicture}[scale=0.65, every node/.style={scale=1}, show background rectangle]
						\node (X) at (0,1.5) [circle, fill=blue, label = {$x$}] {};
    					\node (Y) at (3,3) [circle, fill=blue, label = {below:$y$}] {};
   						\node (Z) at (6.3, 1) [circle, fill=blue, label = {right: $z$}] {};
   						\draw[color = white] (X) circle (4);
   						\draw[color = white] (Z) circle (4.2);
   						\draw[] (X) circle (3.7);
   						\draw[] (Y) circle (1.6);
   						\draw[] (Z) circle (2);
   						\draw[thick] (X) to (Y);
   						%\node (A) at (-4.5,6.5)[circle, draw]{1};
\end{tikzpicture}
\begin{tikzpicture}[scale=0.65, every node/.style={scale=1},show background rectangle]
						\node (X) at (0,1.5)[circle, fill=blue, label = {$x$}] {};
    					\node (Y) at (3,3) [circle, fill=blue, label = {below:$y$}] {};
   						\node (Z) at(6.3, 1) [circle, fill=blue, label = {right: $z$}] {};
   						\draw[color = white] (Z) circle (4.2);
   						\draw[] (X) circle (3.7);
   						\draw[] (Y) circle (1.6);
   						\draw[] (Z) circle (2);
   						\draw[thick] (X) to (Y);
   						\draw[dashed] (X) circle (4);
   						\draw[dashed] (X) to (3.9,0.577);
   						\node (E1) at (1.4,0.4)[]{$\beta W(x,z)R_x$};
\end{tikzpicture}
\begin{tikzpicture}[scale=0.65, every node/.style={scale=1},show background rectangle]
						\node (X) at (0,1.5)[circle, fill=blue, label = {$x$}] {};
    					\node (Y) at (3,3) [circle, fill=blue, label = {below:$y$}] {};
   						\node (Z) at(6.3, 1) [circle, fill=blue, label = {right: $z$}] {};
   						\draw[color = white] (X) circle (4);
   						\draw[] (X) circle (3.7);
   						\draw[] (Y) circle (1.6);
   						\draw[] (Z) circle (2);
   						\draw[thick] (X) to (Y);
   						\draw[dashed] (Z) circle (4.2);
   						\draw[dashed] (Z) to (2.2, 0);
   						\node (E2) at (4.4,-0.2)[]{$\beta W(y,z)R_z$};
   						\draw[thick] (Z) to (Y);
\end{tikzpicture}
}
\end{center}
\caption{Example for the connection mechanism of the soft Boolean model in two dimensions. The solid lines represent the graph's edges. Left: the black circles represent the individual interaction balls governed by $R_x,R_y,R_z$ which already gives rise to an edge $(x,y)$. Middle: the enlarged dashed circle centred at $x$ is due to the edge weight $W(x,y)$, which in this case does not lead to a new edge. Right: On the other hand, the enlarged dashed circle centred at $z$ now includes the point $y$, leading to an edge $(y,z)$.}
\label{fig:SoftBool}
\end{figure}
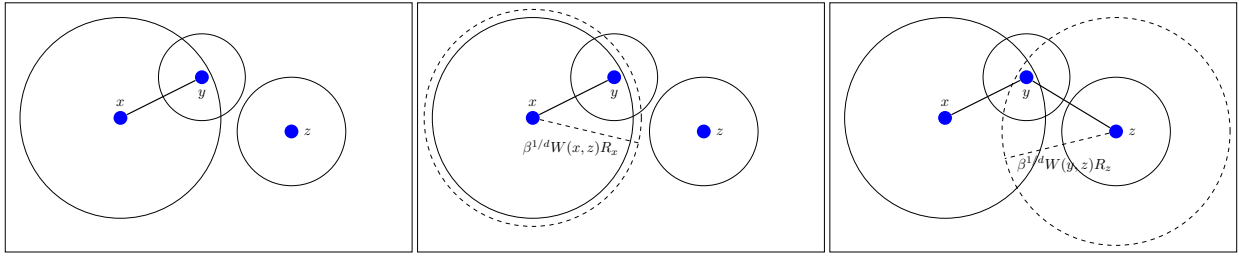

A variant of the model is given by replacing~\eqref{eq:defSoftBool} by the connection rule \(|x-y|\leq \beta^{1/d} W(x,y)\big(R_x + R_y\big)\),
where now any two vertices are connected when their enlarged balls intersect. However, since \(R_x\vee R_y\leq R_x+R_y\leq 2(R_x\vee R_y)\), there is no qualitative change of behaviour between the two model when it comes to the existence of global phase transitions. We therefore stick to the model~\eqref{eq:defSoftBool}.  
It is easy to see that the soft Boolean model interpolates between long-range percolation and the heavy-tailed Boolean model: The parameter \(\delta\) controls the rigidity of the geometric restrictions of the {Boolean model, where vertices must be located within their radius to form an edge. These restrictions are now softened by the \emph{independent} edge weights so} that they become stronger when \(\delta\) increases. Put differently, the smaller \(\delta\), the less a vertex feels the geometry, adding long-range effects to the model. The requirement \(\delta>1\) is needed to remain sparse in the sense that every vertex has finite expected degree. By sending \(\delta\to\infty\), one obtains a variant of the classical Boolean model. Similarly, {\(1/\gamma\), for \(\gamma\in(0,1)\),} measures the size of the vertex radii (or their volume, respectively). The larger \(\gamma\), the larger the radii and hence the more powerful vertices can be found in the graph. Since the radii are heavy-tailed, \(\G^\beta\) has a heavy-tailed degree distribution with tail exponent \(1/\gamma\), see~\cite{Lue2022, GHMM2022}. Again, the constraint \(\gamma<1\) is needed to maintain sparseness. By taking the limit \(\gamma\to 0\), one obtains a version of long-range percolation in the continuum.

\subsection{Results on the Euclidean diameter}
We are interested in the size of the component of a typical vertex in the subcritical regime. Let  \(x\leftrightarrow y\) denote the event that two vertices are connected in \(\G^\beta\) by a finite path of directly connected vertices. Let further \((o,R_o)\) be a vertex placed at the origin, assigned with an independent radius and added to the graph according to the rule~\eqref{eq:defSoftBool}. Denote the underlying law after the vertex \(o\) has been added by \(\P_o\). According to Palm theory, the vertex \(o\) plays the role of a typical vertex shifted to the origin~\cite{LastPenrose2017} and we study its component in the following. Denote by
\begin{equation*}
    \C_\beta:=\big\{x\in X\colon o\leftrightarrow x \text{ in }\G^\beta\big\}
\end{equation*}
the component of the origin {and by \(\scN_\beta=\sharp\C_\beta\) its cardinality.} When there is no infinite component in the graph, the component of the origin must be finite. Vice versa, if there is an infinite component, the origin is part of it with a positive probability. We can therefore write the critical percolation intensity as
\begin{equation*}
	\beta_c :=\beta_c(\gamma, 0, \delta):= \sup\big\{\beta>0\colon  \P_o(\scN_\beta=\infty)=0\big\},
\end{equation*}
where the notation of \(\beta_c(\gamma,0,\delta)\) is made clear in Section~\ref{sec:Construction} once we introduced the more general framework. It was shown in~\cite{GLM2021} that \(\beta_c>0\) if {\(1/\delta<1/\gamma-1\) but that \(\beta_c=0\) if \(1/\delta>1/\gamma-1\),} so we focus on the first parameter regime in the following. We are interested in the Euclidean diameter of the component of the origin in a subcritical regime as it measures how far a typical component still spreads out. We define
\begin{equation}\label{eq:defDiam}
		\mathscr{D}_\beta :=\sup\big\{|x|\colon x\in\C_\beta\big\}
\end{equation}
and call it the \emph{Euclidean diameter} of \(\C_\beta\). Note that \(\mathscr{D}_\beta\) is actually the radius of the smallest ball around \(o\) that contains the whole component. However it is of the same order as \(\sup\{|x-y|\colon x,y\in\C_\beta\}\) and we thus stick to the name Euclidean diameter in agreement
with~\cite{Gouere08}. Let us state our main result about the tail probability of the  Euclidean diameter of a typical cluster in a {low edge-intensity regime. Note that the event \(\{\mathscr{D}_\beta>m\}\) says that the origin is connected to distance at least \(m\), which is also referred to as the \emph{one-arm event}.}

\begin{theorem}[Results on the Euclidean diameter]\label{thm:MetaMain} 
	Let \(d\geq 1\), \(\delta>1\), and \(1/\delta<1/\gamma-1\). Then, there exists \(\widetilde{\beta}>0\) such that, for all \(\beta<\widetilde{\beta}\), we have,
	\begin{enumerate}[(i)]
		\item if \(\delta\leq 1/\gamma -1\), then			
			\[
				\lim_{m\to\infty} \frac{\log\P_o(\mathscr{D}_\beta>m)}{\log m} = {-d(\delta-1)},\quad\text{and}
			\]
		\item if \(\delta>1/\gamma-1\), then
			\[
				\lim_{m\to\infty}\frac{\log\P_o(\mathscr{D}_\beta>m)}{\log m} = {-d\frac{\delta-1}{\delta}\Big(\frac{1}{\gamma} -1\Big)}.
			\]
	\end{enumerate}
\end{theorem}
\begin{remark}
	We deduce the following bounds for \(\widetilde \beta\) from our proofs
	\begin{equation}\label{eq:boundsForTailBeta}
	   \widetilde{\beta}\geq  
        \begin{cases}
		  \frac{1}{2^{d\delta+3}+1}\cdot\tfrac{\delta-1}{\omega_d \delta}\big(1-\gamma\tfrac{\delta+1}{\delta}\big), & \text{if }\tfrac{1}{\gamma}\leq 2, \\
		  \tfrac{\delta-1}{\omega_d\delta}\big(\tfrac{1}{1-2\gamma}+\tfrac{1}{1-\gamma}\big)^{-1} \vee  \frac{1}{2^{d\delta+3}+1}\cdot\tfrac{\delta-1}{\omega_d \delta}\big(1-\gamma\tfrac{\delta+1}{\delta}\big), & \text{if } \tfrac{1}{\gamma}> 2,
		  \end{cases}
	\end{equation}
where \(\omega_d\) denotes the volume of the \(d\)-dimensional unit ball. In fact, we believe that the tail of the Euclidean diameter decays with the derived rates throughout the whole subcritical regime. This would particularly imply that \(\widetilde{\beta}\) coincides with the standard critical intensity of percolation \(\beta_c\) and we discuss this, even further in the context of various other critical values, in greater detail in Section~\ref{sec:openProblems}.
\end{remark}

{Theorem~\ref{thm:MetaMain} is a direct consequence of Theorem~\ref{thm:Main} below, which is proved in Section~\ref{sec:proofs}. The distinction between the two phases becomes apparent when considering suitable lower bounds for \(\P_o(\mathscr{D}_\beta>m)\). The corresponding exponents reflect the interpolation between two limiting models: the long-range percolation model, where edges appear independently and a single edge may span distance \(m\), and the Boolean model, where the presence of high-degree vertices is exploited by using two edges. 
To make this precise, observe first that the tail of the \emph{degree distribution} in the soft Boolean model coincides with that in the standard Boolean model and is thus determined by the \emph{volume} of large balls, i.e., \(\P_o(\text{degree of }o\geq k)\asymp \P_o(R_o^d \geq k) \asymp k^{-1/\gamma}\)~\cite{Lue2022,GHMM2022}. However, the largest degree in the neighbourhood of \(o\), used as an intermediate vertex in the two-edge strategy, follows the \emph{size-biased} degree distribution with tail index \(1-1/\gamma\). This effect is sometimes referred to as \emph{friendship paradox} in the complex network literature and can easily be seen by an exploration of the origin's neighbourhood described by a branching process. The distinction between Cases~(i) and (ii) therefore depends on whether the edge-weight distribution or the size-biased degree distribution has the heavier tail.

\begin{description}
	\item[Case~(i).]
		The tail of the edge weights is heavier and the diameter is dominated by the pure long-range effect and a lower bound for \(\P_o(\mathscr{D}_\beta>m)\) then arises from the probability that a single long edge connects \(o\) to distance \(m\).
	\item[Case~(ii).]
		The size-biased degree distribution has the heavier tail and it becomes advantageous to exploit high-degree vertices. However, the long-range effects still play a role. Consider first the standard Boolean model, where long-range effects are absent. Connecting \(o\) to distance \(m\) via two edges requires that a high-degree neighbour of \(o\) within distance \(m\) has radius of order \(m\). Since the degree particularly depends on the volume \(m^d\) of the ball, applying the size-biased degree distribution gives probability of order \(m^{d(1-1/\gamma)}\), agreeing with the results of~\cite{Gouere08}, which corresponds to the \(\delta\to\infty\) limit of our exponent. 
		In the soft Boolean model, however, even weak long-range effects allow high-degree vertices to connect beyond their own radius, so a smaller radius suffices. As shown in Lemma~\ref{lem:MainThmLGLower}, a radius of order \(m^{(\delta-1)/\delta}\) already ensures the existence of a neighbour at distance \(m\) with positive probability. Since this radius must be realised by the high-degree neighbour of \(o\), applying the size-biased degree distribution yields a probability of order \(m^{d[(\delta-1)/\delta](1-{1/\gamma})}\), implying the exponent in Case~(ii). 
\end{description}

Note that neither strategy depends qualitatively on the value of \(\beta\). The requirement that \(\beta\) is sufficiently small is only needed in the proof of the upper bound, where control of potentially long paths is required.

Similar considerations also appear in other contexts. More precisely, in~\cite{JorritsmaMitscheKomjathy2023}, the \emph{vertex boundary exponent}~\(\zeta\) is introduced to quantify the occurrence of long edges. Here, \(m^{d\zeta}\) gives the order of the expected number of vertices in a ball of radius \(m\) that have a neighbour of smaller radius at distance \(m\).
If \(\zeta>0\) such vertices occur with high probability and~\cite{JorritsmaMitscheKomjathy2023,jorritsmaMitscheKomjathy2023LRP,jorritsmaMitscheKomjathy2024LDP} show that, in many models, the tail distribution of finite clusters in \emph{supercritical} regimes decays stretched exponentially with exponent \(\zeta\), while~\cite{Moench2023} proves continuity of the percolation function and transience.  
If, on the other hand, \(\zeta<0\), vertices incident to long edges are rare and the probability of finding a long edge in the ball of radius \(m\) is precisely of order \(m^{d\zeta}\). In~\cite{jacobJahLu2024}, it is shown that this can be used to describe subcritical regimes via the crossings of large annuli, an idea that goes back to~\cite{Gouere08}, which characterises the subcritical phase in the standard Boolean model. Specifically,~\cite{jacobJahLu2024} shows that large annuli are either crossed by a single long edge or only by long paths whose existence can be suppressed by choosing \(\beta\) small enough. In the latter situation, the probability of traversing a large annulus between balls of radius \(m\) and \(2m\) is again of order \(m^{d\zeta}\). Furthermore,~\cite{luechtrath24} shows that when long edges are rare, graph distances are comparable to Euclidean distances. 
 To determine \(\zeta\), there are again two possible strategies: either the radii play no particular role, and one tries to independently connect pairs of vertices at distance \(m\); or the radii do play a role, and one requires a vertex of radius at least \(m^{(\delta-1)/\delta}\) within the \(m\)-ball as in Case~(ii) above. The important difference is that the vertices are no longer required to be connected to the vertex at the origin, so that there are of order \(m^d\) candidate vertices that may form the long edge. In the first case, this leads to \(\zeta=2-\delta\). In the second case, the vertex radii here follow the original distribution (rather then the size-biased one), leading to \(\zeta=1-(\delta-1)/(\gamma\delta)\). Note that the dominant effect is now determined by whether \(\delta<1/\gamma\) or not, meaning that the original Pareto tails of radii and edge-weights are compared. In summary, \(\zeta\) governs the strength of long-range effects and carries important structural information. However, it alone does not characterise subcritical behaviour. Specifically, there exists parameter regimes where \(\zeta>0\) yet the model has a subcritical phase. Moreover, \(|\zeta|\leq [(\delta-1)/\delta](1/\gamma-1)\wedge (\delta-1)\) so that \(\zeta\) does not accurately describe the decay of the Euclidean diameter. As a final remark, note that for the standard Boolean model, where no long-range effects are present, \(\zeta\) accurately describes the subcritical phase.

All the above remains true if we replace the Pareto distributed radii by more general heavy-tailed radii. As long as the new radius distribution has the same tail exponent, thus the same integrability properties, our results remain true in the sense that the decay exponent of the Euclidean diameter does not change {although it may, of course, change the bounds on \(\widehat{\beta}\) in~\eqref{eq:boundsForTailBeta}.} The same applies to our results on the number of points, presented in the following section.  

%%%%%%%%%%%%%%%%%%%%%%%%%%%%%%%%
%%%%%%%% number points %%%%%%%%%
%%%%%%%%%%%%%%%%%%%%%%%%%%%%%%%%
\subsection{Results on the number of points}
	Another important quantity to describe the subcritical typical component is the number of vertices it contains{, denoted by \(\scN_\beta\)}. Together with the Euclidean diameter, this quantity characterises both the spatial spread out and size of the component. While the Euclidean diameter is driven by the longest edges, the cardinality is driven by the highest degrees in the cluster. Indeed, due to subcriticality, there are no `long' paths (in the sense of the graph distance), hence, reaching many vertices with short paths only, requires at least a few high-degree vertices. There is however one direct connection between \(\mathscr{N}_\beta\) and \(\mathscr{D}_\beta\). {If \(\mathscr{N}_\beta>m^d\) occurs, then either \(\mathscr{D}_\beta>c m\) for some appropriate \(c>0\) or the underlying Poisson process must contain atypically many points in the ball of radius \(cm\). As the Poisson process concentrates exponentially fast around its mean, \(\P_o(\mathscr{N}_\beta>m^d)\) decays at least as fast as \(\P_o(\mathscr{D}_\beta>cm)\). For the standard Boolean model both probabilities are indeed of the same order~\cite{JahnelAndrasCali2022, Gouere08}. More precisely, in that case the neighbour with radius \(m\) that connects  \(o\) to distance \(m\) contributes its roughly \(m^d\) own neighbours to the cardinality. Since its degree follows the size-biased degree distribution, we infer the same tail behaviour. }
	
	The situation is different in long-range percolation, where the subcritical Euclidean diameter is heavy tailed with tail exponent \(d(\delta-1)\) but the tail of the number of vertices decays exponentially fast~\cite{Meester1995,MeesterRoy1996}. This is due to the fact that the long-range edges connect vertices at large distances, yielding a heavy-tailed Euclidean diameter, but the overall degree distribution is not strong enough to guarantee a heavy-tailed number of vertices in the component. 
    {Indeed, the degree distribution in long-range percolation has an exponentially decaying tail and consequentially so has the size-biased version.
	
	In the soft Boolean model both effects mix. As discussed above, large degrees are mainly driven by large balls, an effect that is independent of the edge weights. Indeed, edge weights allow a vertex to connect beyond its radius, but because of independence and integrability, the vast majority of neighbours of a large-ball vertex still lie within a constant multiple of its radius. Hence, to realise \(\mathscr{N}_\beta\geq m^d\) via a single large-degree neighbour, a vertex of radius of order \(m\) is still required, as the long-range effect cannot qualitatively contribute. This contrasts with the Euclidean diameter where a radius of order \(m^{(\delta-1)/\delta}\) suffices to connect the origin to distance \(m\), yet such a vertex only contributes \(m^{d(\delta-1)/\delta}<m^d\) neighbours. 
	
	Summarising, a large subcritical cluster must contain at least one vertex with large radius, determined solely by the size-biased degree distribution. Using a branching-process approach, we establish this in the regime where the size-biased degree distribution has finite expectation, and the edge intensity is sufficiently small.
	}
 
 	\begin{theorem}[Subcritical cardinality]\label{thm:NumberPoints}
		Let \(d\geq 1\), \(\delta>1\), and \(0<\gamma<1\). 
		\begin{enumerate}[(i)]
			\item If \(1/\gamma-1>1\), then there exists \(\beta'>0\) such that, for all \(0<\beta<\beta'\), there exist constants \(c,C\in(0,\infty)\) such that, for all \(m>1\),
			\[
				c m^{1-1/\gamma}\leq \P_o(\mathscr{N}_\beta>m)\leq C m^{1-1/\gamma}.
			\]
			\item If \(1/\gamma-1<1\), then \(\E \mathscr{N}_\beta = \infty\) for all \(\beta>0\).
		\end{enumerate}
	\end{theorem}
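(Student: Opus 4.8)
The plan is to treat the two parts almost in reverse. Part~(ii) will come for free once the lower bound of Part~(i) is established, because that lower bound will hold for \emph{every} \(\beta>0\): if \(\P_o(\scN_\beta>m)\ge c\,m^{1-1/\gamma}\) for all \(m>1\), then \(\E_o\scN_\beta=\sum_{m\ge0}\P_o(\scN_\beta>m)\ge c\sum_{m\ge1}m^{1-1/\gamma}\), which diverges exactly when \(1-1/\gamma\ge-1\), i.e.\ \(\gamma\ge1/2\); and for \(\beta\ge\beta_c\) the statement is trivial since then \(\P_o(\scN_\beta=\infty)>0\). So it remains to prove the two bounds of Part~(i).

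For the \emph{lower bound} I would force the typical mechanism by hand. Set \(r:=8m/(\omega_d\beta)\) and let \(B\) be the Euclidean ball with volume \(m\), i.e.\ of radius \((\beta r/8)^{1/d}\). Conditioning on the Poisson points in \(B\) and thinning by the radius distribution, the event \(A\) that some \(x\in X\cap B\) has \(R_x\ge r\) has probability \(1-\mathrm e^{-m\,r^{-1/\gamma}}\), which is of order \(m^{1-1/\gamma}\) for large \(m\). On \(A\) pick such an \(x\); since \(W\ge1\) almost surely and \(|o-x|^d\le\beta r/8<\beta R_x\le\beta W(o,x)R_x\) we get \(o\leftrightarrow x\), so \(x\in\C_\beta\). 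Moreover \(A\) only involves \(X\cap B\) and the marks there, so conditionally on \(A\) the Poisson process on \(\R^d\setminus B\) is untouched, every \(y\in X\setminus B\) with \(|x-y|^d\le\beta r\) is joined to \(x\), and the region \(\{y:|x-y|^d\le\beta r\}\setminus B\) has volume at least \(\omega_d\beta r-|B|=7m\). Hence, conditionally on \(A\), \(\scN_\beta\) dominates \(1\) plus a Poisson\((7m)\) variable and so exceeds \(m\) with probability tending to one, giving \(\P_o(\scN_\beta>m)\ge\tfrac12\P(A)\ge c\,m^{1-1/\gamma}\) for large \(m\), and for all \(m>1\) after adjusting \(c\). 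This used neither subcriticality nor \(\gamma<1/2\), which is why it also serves Part~(ii).

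For the \emph{upper bound} (now \(\gamma<1/2\) and \(\beta<\widetilde\beta_c\)) I would use a branching-process domination. Conditionally on the radii, \(\G^\beta\) is a random connection model with kernel \(1\wedge(\beta(R_x\vee R_y)/|x-y|^d)^\delta\), and exploring \(\C_\beta\) from \(o\) shows that \(\scN_\beta\) is stochastically dominated by the total progeny \(|\T|\) of a multi-type Galton--Watson tree in which a vertex of type \(r\) produces a Poisson offspring point process with type intensity \(\omega_d\tfrac{\delta}{\delta-1}\beta(r\vee s)f_R(s)\,\d s\), where \(f_R\) is the Pareto\((1/\gamma)\) density. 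A two-dimensional moment closure for \(\big(\E[\#\{v\text{ in generation }n\}],\ \E[\sum_{v\text{ in generation }n}R_v]\big)\) — which needs \(\E R^2<\infty\), i.e.\ \(\gamma<1/2\) — shows that for \(\beta<\widetilde\beta_c\) both quantities decay geometrically, so \(\E|\T|<\infty\) and \(\E[\sum_{v\in\T}R_v]<\infty\). A first-moment computation then gives, for every \(\rho\ge1\),
\[
\E\big[\#\{v\in\T:R_v\ge\rho\}\big]\ \le\ C\rho^{1-1/\gamma},
\]
the leading contribution coming from the fact that a vertex of type \(r<\rho\) has on average \(\asymp\beta\rho^{1-1/\gamma}\) children of type \(\ge\rho\). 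Fixing a small \(\varepsilon>0\), I would split \(\{\scN_\beta>m\}\) according to whether \(\C_\beta\) contains a vertex of radius \(\ge\varepsilon m\): the first alternative has probability at most \(C(\varepsilon m)^{1-1/\gamma}\) by the display; on the complement, deleting all vertices of radius \(>\varepsilon m\) does not change the cluster of \(o\), so \(\scN_\beta\) is bounded by the total progeny of the tree truncated to types \(\le\varepsilon m\). That truncated tree is still subcritical with the same spectral radius \(<1\), uniformly in the truncation level, but now has offspring means of size \(O(\beta\rho)\); reaching total progeny \(>m\) therefore requires of order \(m/(\beta\rho)\) vertices of type comparable to \(\rho\), each costing roughly a factor \(\rho^{1-1/\gamma}\ll1\). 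A multi-scale decomposition along a geometric sequence of type-thresholds down to a constant (the bounded-type part being handled by the classical exponential tail for bounded subcritical trees) should yield, up to polynomial corrections, a bound \((C\rho^{1-1/\gamma})^{\,cm/(\beta\rho)}\) for the truncated progeny; for \(\rho\le\varepsilon m\) with \(\varepsilon\) small this is \(o(m^{1-1/\gamma})\) even after summing over the \(O(\log m)\) scales, so in total \(\P_o(\scN_\beta>m)\le C\varepsilon^{1-1/\gamma}m^{1-1/\gamma}+o(m^{1-1/\gamma})\).

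The main obstacle is precisely this last estimate: showing that, in the subcritical phase, a cluster with more than \(m\) vertices is super-polynomially unlikely unless it already contains a vertex of radius of order \(m\). Because the truncated offspring law is not uniformly light-tailed — its mean grows linearly in the type — no single exponential-moment bound suffices, and one has to quantify scale by scale how expensive several moderately-large-radius vertices are, which is essentially the path-counting bookkeeping underlying the proof of Theorem~\ref{thm:Main}\,(iii). I would also expect this bookkeeping at the threshold \(\gamma=1/2\) to be exactly what separates the finite- from the infinite-mean regime, consistent with Part~(ii).
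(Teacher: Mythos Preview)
Your lower bound and Part~(ii) are essentially the paper's argument: the paper also plants a single large-radius vertex near the origin (in its mark parametrisation, a vertex with mark below $c(2m)^{-1/\gamma}$ inside $\B(m^{1/d})$), notes it is automatically joined to $\o$ and has $\gtrsim m$ neighbours by a Poisson tail bound, and for Part~(ii) it simply cites the classical Boolean-model result; your summation of the lower bound over $m$ is an acceptable self-contained alternative.

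The upper bound is where you diverge, and the step you correctly flag as the ``main obstacle'' is a genuine gap rather than routine bookkeeping. The paper does \emph{not} attempt to control the max-kernel multi-type tree by a multi-scale decomposition. It makes one further domination first: since $(s\wedge t)^\gamma\ge (s\wedge t)^\gamma(s\vee t)^\gamma=s^\gamma t^\gamma$, the soft Boolean model is a subgraph of scale-free percolation with the \emph{product} kernel $g_{\gamma,\gamma}$. For the product kernel the offspring-type intensity factorises, so the type of a child is independent of the parent's type; the multi-type branching process therefore collapses to a \emph{single-type} process with mixed-Poisson offspring of mean $\beta C W$, where $W\sim\mathrm{Pareto}(1/\gamma)$. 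The tail of the total progeny then follows from Dwass' identity together with the single-big-jump principle for i.i.d.\ heavy-tailed sums, giving $\P(\mathcal C\ge m)\asymp m^{1-1/\gamma}$ in one line.

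Your truncated-tree route keeps the max kernel and hence a genuinely multi-type tree whose offspring mean is linear in the type. The heuristic bound $(C\rho^{1-1/\gamma})^{cm/(\beta\rho)}$ tacitly assumes the $>m$ vertices concentrate at a single radius scale $\rho$; there is no reason for this, and summing over configurations spread across many scales is precisely the combinatorics the product-kernel trick avoids. So the missing idea is the domination by $g_{\gamma,\gamma}$ and the resulting reduction to a single-type branching process, after which Dwass plus single big jump replaces your entire multi-scale programme.
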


	{We believe the result to hold for the whole parameter regime where there is a subcritical phase, i.e., \(1/\delta<1/\gamma-1\) and within the whole subcritical phase, i.e., \(\beta'=\beta_c\). We comment on this in Section~\ref{sec:openProblems}. The proof is given in Section~\ref{sec:NumberPoints}.
	}
	
	Let us finally note that the lower bound of Case~(i) in Theorem~\ref{thm:NumberPoints} is valid for all \(\gamma\in(0,1)\) by our arguments above. In order to prove the upper bound of the theorem and apply the aforementioned branching argument, we couple the soft Boolean model with a version of \emph{continuous scale-free percolation}~\cite{DeprezWuthrich2019,DeijfenHofstadHooghiemstra2013} in which the minimum \(R_x\vee R_y\) in~\eqref{eq:defSoftBool} is replaced by the product \(R_x R_y\). As a result, we immediately observe the same bounds for this model as well. Interestingly, this model only has a subcritical phase for \(1/\gamma\geq 2\), see~\cite{DeprezWuthrich2019,GLM2021}. {Hence, except for the boundary case \(1/\gamma=2\), the result covers the whole parameter regime for which a subcritical phase exists. Again, we require a sufficiently small intensity but believe the result to be true within the whole subcritical regime.    }
		
	\begin{theorem}[Subcritical cardinality in scale-free percolation] \label{thm:SFP}
		Let \(d\geq 1\), \(\delta>1\), and  \(1/\gamma-1>1\). Consider continuous scale-free percolation, i.e.\ the graph constructed via connection rule~\eqref{eq:defSoftBool} with \(R_x\vee R_y\) replaced by \(R_x R_y\) and denote the cardinality of the component of the origin by \(\mathscr{N}_{\beta,\gamma,\gamma,\delta}\). Then, there exists {\(\beta'>0\) such that, for all \(0<\beta<\beta'\), there exist constants \(c,C\in(0,\infty)\) such that}, for all \(m>1\),
		\[
			c m^{1-1/\gamma}\leq \P_o(\mathscr{N}_{\beta,\gamma,\gamma,\delta}\geq m)\leq C m^{1-1/\gamma}. 
		\]
	\end{theorem}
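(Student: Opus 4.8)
The lower bound is immediate. As the radii are Pareto distributed on $[1,\infty)$ we have $R_xR_y\ge R_x\vee R_y$ for every pair, and $W(x,y)\ge 1$, so in the natural coupling the graph built from \eqref{eq:defSoftBool} with $R_x\vee R_y$ replaced by $R_xR_y$ contains the soft Boolean model, which in turn contains the classical Boolean model $\{|x-y|^d\le\beta(R_x\vee R_y)\}$ as a subgraph. Hence $\mathscr{N}_{\beta,\gamma,\gamma,\delta}$ stochastically dominates the cardinality of the origin's cluster in the classical Boolean model, and the bound $\P_o(\mathscr{N}_{\beta,\gamma,\gamma,\delta}\ge m)\ge cm^{1-1/\gamma}$ follows from the corresponding result \cite{Gouere08,JahnelAndrasCali2022}; this is the same comparison used for the lower bound of Theorem~\ref{thm:NumberPoints}, and can also be seen directly, since with probability of order $m^{1-1/\gamma}$ the origin has a neighbour $x$ with $R_x$ of order $m$, and such a vertex is already connected to order $m$ further vertices.

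The upper bound is the substantive direction, and it simultaneously yields the upper bound of Theorem~\ref{thm:NumberPoints}, because the soft Boolean model of \eqref{eq:defSoftBool} is a subgraph of the present model. The guiding picture is that the exploration of the origin's cluster is dominated by a subcritical branching process whose offspring law has tail exponent $1/\gamma-1$ rather than $1/\gamma$: a vertex reached along an edge from a vertex of radius $r$ has radius with density proportional to $s^{-1/\gamma}$ on $[1,\infty)$ --- the size-biasing of the Pareto radius by the connection probability, which is $\asymp\beta rs/|x-y|^d$ after integrating out the Pareto weight $W$, whose exponent $\delta>1$ makes the ensuing spatial integral converge --- hence it has tail of order $t^{1-1/\gamma}$, and the number of further vertices such a reached vertex connects to is of order $\beta$ times its radius. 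Since $t^{1-1/\gamma}$ has finite mean precisely when $\gamma<1/2$, for such $\gamma$ and $\beta$ below a threshold $\widetilde{\beta}_c(\gamma,\gamma,\delta)$ this branching process is subcritical, and the total progeny of a subcritical Galton--Watson process inherits the offspring tail, which is exactly of order $m^{1-1/\gamma}$.

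To make this rigorous I would first prove a two-point estimate of the form $\int_{\R^d}\P_o(o\leftrightarrow x\mid R_o=r,\,R_x=s)\,\d x\le C\beta rs$, valid for $\gamma<1/2$ and $\beta<\widetilde{\beta}_c$: expand $\{o\leftrightarrow x\}$ over self-avoiding paths, bound it by a BK-type tree-graph inequality, note that the direct-edge term integrates to order $\beta rs$, and show that summing over all path lengths and over the radii of the intermediate vertices leaves a convergent geometric series --- this is where $\gamma<1/2$ (finiteness of the degree variance) and the smallness of $\beta$ enter. Averaging against the radius laws then gives, restricted to $R_x\ge\varepsilon m$, the first-moment bound $\E_o[\#\{x: o\leftrightarrow x,\ R_x\ge\varepsilon m\}]\le C\beta(\varepsilon m)^{1-1/\gamma}$, which controls the event that the cluster contains a vertex of radius of order $m$. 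On the complementary event every radius in the cluster is below $\varepsilon m$, so the cluster equals the one in the model with all vertices of radius $\ge\varepsilon m$ deleted; for that truncated model the same expansion gives $\E_o[(\text{truncated cluster size})^{\,j}]\le C_j\,\beta^{\,j+1}(\varepsilon m)^{\,j+1-1/\gamma}$ for any integer $j>1/\gamma-1$, and Markov's inequality bounds the remaining probability by a constant times $\varepsilon^{\,j+1-1/\gamma}m^{1-1/\gamma}$. Adding the two contributions gives $\P_o(\mathscr{N}_{\beta,\gamma,\gamma,\delta}\ge m)\le Cm^{1-1/\gamma}$, and the subgraph inclusion transfers the same bound to the soft Boolean model, completing Theorem~\ref{thm:NumberPoints} as well.

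I expect the tree-graph expansion to be the main obstacle. In contrast to homogeneous models, internal vertices of a path may themselves carry large radii, so one must integrate out their positions and radii with care, keep track of how contracting an internal vertex recombines the radii at its neighbours, and control the sum over all tree topologies with a prescribed number of leaves; this is precisely the fine path-counting on which the rest of the paper rests, and it is also the point where the finite-variance restriction $\gamma<1/2$ --- without which the present model has no subcritical phase at all --- becomes indispensable.
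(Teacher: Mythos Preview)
Your lower bound and your heuristic paragraph for the upper bound are exactly right and in fact coincide with what the paper does: domination by a branching process whose offspring law (after size-biasing through the edge) has Pareto tail of index $1/\gamma-1$, subcritical for $\gamma<1/2$ and $\beta$ small, so that the total progeny inherits the offspring tail $m^{1-1/\gamma}$.

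Where you diverge from the paper is in the ``to make this rigorous'' part. You propose a tree-graph expansion for the two-point function, followed by a truncation of large radii and higher-moment bounds for the truncated cluster. This can presumably be pushed through, but it discards the one structural feature that makes scale-free percolation easier than the soft Boolean model: the kernel $g_{\gamma,\gamma}(s,t)=s^\gamma t^\gamma$ \emph{factorises}. Because of this, in the dominating multi-type branching process the type of a child is \emph{independent} of its parent's type (see \eqref{eq:expectedType}--\eqref{eq:expectedGen1}), so the process collapses to a single-type Galton--Watson process with mixed-Poisson offspring of mean $\beta C W$, $W$ Pareto$(1/\gamma)$. From here the paper finishes in two lines: Dwass' identity $\P(\mathcal{C}=k)=k^{-1}\P(Z_1+\dots+Z_k=k-1)$ together with the single-big-jump principle for i.i.d.\ heavy-tailed summands with mean $<1$ gives $\P(Z_1+\dots+Z_k=k-1)\asymp k^{1-1/\gamma}$, and summing over $k\ge m$ yields the upper bound.

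So your branching heuristic \emph{is} the rigorous proof once you notice the factorisation; there is no need for the BK/path-counting machinery, the truncation, or the $j$-th moment estimate (which, incidentally, would require more care: transferring moment growth from the truncated offspring variable to the total progeny is not automatic and would itself need a tree-graph argument). Your route is more general---it does not rely on the product kernel and is closer in spirit to the skeleton arguments used for the soft Boolean model---but for this particular model it is considerably harder than necessary.
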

    \newpage
	\begin{remark}  ~\
		\begin{enumerate}[(i)]
			\item 
				By the same coupling argument, Theorem~\ref{thm:NumberPoints} actually applies to all graphs constructed via connection rule~\eqref{eq:defSoftBool} but with \(R_x\vee R_y\) replaced by a symmetric function \(f(R_x,R_y)\) that satisfies
				\[
					R_x\vee R_y \leq f(R_x,R_y) \leq R_x R_y.
				\]
			\item 
			     Continuous scale-free percolation admits a natural hyperbolic interpretation: in one dimension with unit edge weights, it appears (after rescaling) as a weak local limit of the hyperbolic random graph~\cite{KomjathyLodewijks2020,KrioukovEtAl2010}. In higher dimensions, and with additional edge weights, it can be seen as a similar extension for the hyperbolic random graph as the soft Boolean model is for the standard Boolean model.  
		\end{enumerate}
	\end{remark}

%%%%%%%%%%%%%%%%%%%%%%%%%%%%%%%%
%%%%% Construction %%%%%%%%%%%%%
%%%%%%%%%%%%%%%%%%%%%%%%%%%%%%%%
\subsection{Formal construction} \label{sec:Construction}
	We formally introduce the soft Boolean model as a special instance of the \emph{weight-dependent random connection model}~\cite{GHMM2022}. Recall that \(X\) denotes a unit-intensity Poisson point process on~\(\R^d\). We can write 
	\(
		X = (X_i\colon i\in\N),
	\)
	c.f.~\cite{LastPenrose2017} and call the elements of \(X\) the \emph{vertex locations}. Let  \(\U=(U_i\colon i\in\N)\) be a family of independent random variables distributed uniformly on \((0,1)\) that we call the \emph{vertex marks}. Let us further write 
	\[
		\cX := \big(\X_i=(X_i,U_i)\in X\times\U\colon i\in\N\big)
	\]
	for the set of vertices and note that \(\cX\) is a unit-intensity Poisson point process on \(\R^d\times(0,1)\). Finally, let \(\mathscr{V}=(V_{i,j}\colon i<j\in\N)\) be another independent sequence of uniformly-distributed random variables on \((0,1)\) that we call the \(\emph{edge marks}\). We define
	\begin{equation}\label{eq:edgeMarking}
		\xi := \big(\big(\{\X_i,\X_j\},V_{i,j}\big)\in\cX^{[2]}\times(0,1),i<j\in\N\big),
	\end{equation}
	where \(\cX^{[2]}\) denotes the set of all subsets of \(\cX\) of size two. We call \(\xi\) an \emph{independent vertex-edge marking} in accordance with the construction in~\cite{HvdHLM20}. Note that \(\xi\) is an ergodic point process on \((\R^d\times(0,1))^{2}\times(0,1)\). Further note that the law of \(\xi\) does not depend on the ordering of the points and that \(\cX\) as well as \(X\) and \(\U\) can be recovered from it. 
	
	Now, fix \(\beta>0\), \(\gamma\in[0,1)\), \(\alpha\in[0,2-\gamma)\) and \(\delta>1\). We define the \emph{interpolation kernel} 
	\begin{equation}\label{eq:InterpolKern}
		g_{\gamma,\alpha}(s,t):=(s\wedge t)^\gamma (s\vee t)^\alpha, \quad s,t\in(0,1),
	\end{equation}
	as introduced in~\cite{GraLuMo2022} and the \emph{profile function}
	\[
		\rho(x):=\rho_\delta(x)=1\wedge x^{-\delta}, \quad x\in(0,\infty).
	\]
	The undirected graph \(\cG^{\beta,\gamma,\alpha,\delta}(\xi)\) is then defined through its vertex set \(\cX\) and edge set
	\begin{equation}\label{eq:edgeSet}
		E\big(\cG^{\beta,\gamma,\alpha,\delta}(\xi)\big) = \Big\{\{\X_i,\X_j\}\colon  V_{i,j}\leq \rho\big(\beta^{-1}g_{\gamma,\alpha}(U_i,U_j)|X_i-X_j|^d\big), i<j\in\N\Big\}.
	\end{equation}
	Note that for \(\alpha=0\), the soft Boolean model \(\G^{\beta}\) defined above and the graph \(\cG^{\beta,\gamma,0,\delta}(\xi)\) have the same law as
	\[
		V_{i,j}\leq \big(\beta^{-1}(U_i\wedge U_j)^\gamma|X_i-X_j|^d\big)^{-\delta}\qquad \Longleftrightarrow\qquad |X_i - X_j|^d\leq \beta V_{i,j}^{-1/\delta}\big(U_i^{-\gamma}\vee U_j^{-\gamma}\big).
	\]
	Therefore, we identify \(\cG^\beta=\cG^{\beta,\gamma,0,\delta}(\xi)\) in the following. Essentially, the parametrisation is now with respect to the \emph{volume} of the associated balls. In particular, \(U_j^{-\gamma}\) describes the volume of the ball associated to vertex \(X_j\), which is Pareto distributed with tail exponent \(1/\gamma\). Similarly, \(V_{i,j}^{-1/\delta}\) follows a Pareto distribution with tail exponent \(\delta\) thus following the same distribution as the \(d\)-th moment of the edge weights. The advantage of the volume parametrisation is that it makes parameters and results independent of the dimension and is, from our perspective, easier to work with. We therefore work from now on explicitly on the probability space on which the vertex-edge marking \(\xi\) lives and denote the underlying probability measure by \(\P\) and the corresponding expectation by \(\E\).
		
	The same parametrisation can also be used to consider models without long-range effects, particularly the random geometric graph or the standard Boolean model. To this end, only the function \(\rho\) in~\eqref{eq:edgeSet} has to be replaced by the indicator \(\1_{[0,1]}\). We identify this choice of profile function with \(\delta=\infty\) as it arrives as the \(\delta\to\infty\) limit of our previous choice. In this notion, the random geometric graph with edge length \(\beta\) is given by \(\cG^{\beta,0,0,\infty}(\xi)\), and the version of the Boolean model where connections are formed when the ball of the stronger vertex contains the weaker vertex is given by \(\cG^{\beta,\gamma,0,\infty}(\xi)\).
	
Next, in order to formulate our main result, we required a vertex located to the origin that had been added to the graph. To do so formally, let us denote by \(\x_0=(x_0,u_0)\) a vertex placed at \(x_0\in\R^d\) and with a given vertex mark \(u_0\in(0,1)\). Let  \(\cX_{\x_0}=\cX\cup\{(x_0,u_0)\}\) denote the vertex set with the additional vertex added. Note that almost surely no vertex in \(X\) has been placed at $x_0$ before. Let us further extend the edge marks by a sequence of independent uniform random variables \((V_{0,j}:j\in\N)\) and denote the resulting sequence by \(\mathscr{V}_{\x_0}\). Finally, we define the vertex-edge marking containing the extra vertex
	\[
		\xi_{\x_0}:=\xi \cup\big\{\big(\{\x_0,\X_i\},V_{0,i}\big)\colon i\in\N\big\}.	
	\]
	The graph containing the extra vertex is then \(\cG^{\beta,\gamma,\alpha,\delta}(\xi_{\x_0})\). In case of the soft Boolean model, we refer to it as \(\cG^\beta_{\x_0}\). We denote the probability measure and expectation governing \(\xi_{\x_0}\) by \(\P_{\x_0}\) or \(\P_{(x_0,u_0)}\), and \(\E_{\x_0}\), respectively. If the vertex mark of \(\x_0\) is not fixed but uniformly distributed independently from everything else, we also denote the vertex by \(\x_0=(x_0,U_0)\) and define \(\P_{x_0}:=\P_{(x_0,u)}\d u\). If $(x_0,U_0)=(o,U_o)$, i.e.\ the vertex is located at the origin, \(\P_o = \P_{(o,u)}\d u\) is the Palm version of the graph, which can be seen as the graph shifted such that a typical (i.e., uniformly chosen) vertex is located at the origin~\cite[Chapter~9]{LastPenrose2017}. Note that this is consistent with previous notation whenever the considered graph coincides with the soft Boolean model. If this is the case, we may also index our objects as before by~\(o\).
	
	In the same way, finitely-many given vertices \(\y_1=(y_1,t_1), \y_2=(y_2,t_2),\dots\) can be added to the graph using  negative indices and writing \(y_i=x_{-i}\) and \(t_i=u_{-i}\). We then write \(\xi_{\y_1,\y_2,\dots}\), and \(\P_{\y_1,\y_2,\dots}\), etc. 
	
	It is important to note that in the formal construction in the previous section the ordering of the Poisson points was important. However, the precise ordering does not change any distributional properties. 
 Therefore, %Since the ordering plays no more role in the following, 
 we drop this notation from here on onwards and denote given vertices by \(\x=(x,u_x)\) or \(\y=(y,u_y)\) but stick with the notation \(\o=(o,u_o)\) for the origin. We further write \(\x_1,\x_2,\dots,\x_n\) for any sequence of given vertices without referring to the ordering above. For two given vertices, we denote by \(\{\x\sim\y \text{ in } \cG^{\beta,\gamma,\alpha,\delta}(\xi_{\x,\y})\}\) the event that \(\x\) and \(\y\) are connected by an edge in the graph \(\cG^{\beta,\gamma,\alpha,\delta}(\xi_{\x,\y})\). If the graph is clear from the context, we simply write \(\{\x\sim\y\}\). Similarly, we denote by \(\{\x\leftrightarrow\y \text{ in }\cG^{\beta,\gamma,\alpha,\delta}(\xi_{\x,\y})\}\) (resp.\ \(\{\x\leftrightarrow\y\}\)) the event that \(\x\) and \(\y\) are connected by a finite path in the graph. We denote by \(\C_{\beta,\gamma,\alpha,\delta}\) the component of the origin in \(\cG^{\beta,\gamma,\alpha,\delta}_o\). {We further denote 
  	by \(\mathscr{D}_{\beta,\gamma,\alpha,\delta}\) and \(\scN_{\beta,\gamma,\alpha,\delta}\) the Euclidean diameter and the cardinality of the component. We abbreviate by \(\C_\beta\), \(\mathscr{D}_\beta\), and \(\scN_\beta\) the component quantities in the soft Boolean model \(\cG^\beta_o\).}
 	 
 	 \paragraph{Tail bounds for \(\M_\beta\).} {Rather than proving Theorem~\ref{thm:MetaMain} directly, one can also derive tail bounds for \(\M_\beta\), which aligns better with our volume-based viewpoint of the model (i.e., attaching balls of a certain volume rather than radii to the vertices) and keeps our results independent of the dimension.} 
 	 The tail bounds in Theorem~\ref{thm:MetaMain} and the lower bounds for \(\widetilde{\beta}\) in~\eqref{eq:boundsForTailBeta} are consequences of the following theorem.
 	 
 	 \begin{theorem}\label{thm:Main}
	Let \(d\geq 1\), \(\delta>1\), and {\(1/\delta<1/\gamma-1<\infty\)} and set \(\beta_0:=\frac{1}{2^{d\delta+3}+1}\cdot\tfrac{\delta-1}{\omega_d \delta}\big(1-\gamma\tfrac{\delta+1}{\delta}\big)>0\). Then, for all \(\beta<\beta_0\), there exist constants \(c_1,C_2,c_3,C_4,c_5,C_6 \in(0,\infty)\), depending on \(\beta,\gamma\), and \(\delta\), such that for all \(m>1\),
	\begin{enumerate}[(i)]
		\item if \(\delta < 1/\gamma-1\), we have
			\[
				c_1m^{1-\delta}\leq \P_o(\M_\beta>m)\leq C_2m^{1-\delta},
			\]
        \item if \(\delta = 1/\gamma-1\), we have
            \[
                c_3 m^{1-\delta} \leq \P_o(\M_\beta >m)\leq C_4 \log(m) m^{1-\delta}, \text{ and }
            \]
		\item if \(1/\delta<1/\gamma-1<\delta\), we have
			\[
				c_5 m^{-\tfrac{\delta-1}{\delta}(\tfrac{1}{\gamma}-1)}\leq \P_o(\M_\beta>m)\leq C_6 \log(m)^{1\vee d(\delta-1)} m^{-\tfrac{\delta-1}{\delta}(\tfrac{1}{\gamma}-1)}.
			\]
	\end{enumerate}
\end{theorem}

Let us remark here for completeness that the alternative lower bound for \(\widetilde \beta\) in~\eqref{eq:boundsForTailBeta} in the \(1/\gamma>2\) regime is a consequence of Proposition~\ref{thm:UpperBoundSmallGamma} below.

%%%%%%%%%%%%%%%%%%%%%%%%%%%%%%%%%%
%%%%%% Proof of main theorem %%%%%
%%%%%%%%%%%%%%%%%%%%%%%%%%%%%%%%%%
	\subsection{Strategy of proof}\label{sec:Strategy}
	We briefly explain the strategy of the proof in this section. The upper bounds in Theorems~\ref{thm:MetaMain} and~\ref{thm:Main} rely on controlling the number of admissible paths. As in the lower bounds, two mechanisms compete: connection through a single long edge when edge weights dominate, or through a high-degree neighbour with a large ball when the size-biased degree distribution dominates. For upper bounds, however, we must consider full paths, and our main tool is a decomposition into skeletons and connectors, which ensures exponential suppression of long paths in the low-intensity regime.

	\begin{description}
		\item[Step~(A)\label{stepA}] In order to get decent bounds on the probability that certain paths exist, we introduce the concept of the skeleton of a path. The skeleton vertices can be seen as the key vertices of the path and allow us to decompose each path in a set of skeleton vertices and a set of connectors  building the subpaths between two skeleton vertices. {This allows us to bound the probability that the full path exists by the expected number of skeletons that form their own path, multiplied by a term that decays exponentially fast in the remaining path length if \(\beta\) is sufficiently small. Since the expected number of such skeleton paths also decays exponentially in the number of skeleton vertices for small enough \(\beta\), we obtain exponential bounds on the probability of long paths, cf.\ Section~\ref{sec:MainThmLGSkeleton}. This explains why our method requires the low-intensity regime and cannot cover the full subcritical phase.} 
	\end{description} 
	
	{Once this is established, the upper bound for \(\P_o(\M_\beta>m)\) in the single-edge case follows immediately, as we can directly apply \ref{stepA} to all paths that contain an edge of length of order \(m^{1/d}\). The other regime is more involved, as vertices with large balls play a central role, and they also strongly affect the expected number of paths. We must therefore separate paths according to whether they include a powerful vertex and, if not, whether they rely on long edges or many short edges.	
	\begin{description}
		\item[Step~(B)\label{stepB}] 
			We use \ref{stepA} to deduce that in a low-intensity regime the probability that the origin's cluster contains a vertex with associated volume of order \(m^{(\delta-1)/\delta}\) is of the same order as the existence of a direct neighbour of that strength, cf.\ Lemma~\ref{lem:upperPowerfulVertex}. 
		\item[Step~(C)\label{stepC}] 
			Next, we consider paths that connect the origin to distance \(m^{1/d}\) without using strong vertices where the last vertex of the paths is located far away; more precisely, at distance \(2m^{1/d}\). We show that the probability of this happening is of the same order as the lower bound, cf.\ Lemma~\ref{lem:MainThmLGupperH}.
		\item[Step~(D)\label{stepD}] 
			Finally, we consider paths that connect to distance \(m^{1/d}\) and that do not use strong vertices as well as end within distance \(2m^{1/d}\) of the origin. We distinguish two kind of such paths, those using an edge longer than \(m/\log m\) and those that do not use such an edge and must therefore consist of at least \(\log m\) edges. We use a calculation comparable to \ref{stepC} to deal with the first case and \ref{stepA} once more to deal with the second case. 
	\end{description}
	
	Let us remark that vertices with balls of volume \(m^{(\delta-1)/\delta}\) yield the main contribution to \(\P_o(\M_\beta>m)\) still. This is most transparent in \ref{stepB} and in the calculations regarding \ref{stepC}, which is dominated by the case where the final long edge is connected to a powerful vertex that has almost volume \(m^{(\delta-1)/\delta}\). However, the logarithmic correction in Part~(iii) of Theorem~\ref{thm:Main} is a result of \ref{stepD} where we have to restrict edge lengths of slightly smaller order than \(m\) to force the remaining paths to contain a growing number of edges.  
	}

%%%%%%%%%%%%%%%%%%%%%%%%%%%%%%%%
%%%%%%%% Discussion %%%%
%%%%%%%%%%%%%%%%%%%%%%%%%%%%%%%%

	\subsection{Further discussion} \label{sec:openProblems}
	In this section, we discuss some further details which particularly concern critical intensities such as \(\beta_c\) and the logarithmic correction in Part~(iii) of the Theorem~\ref{thm:Main}.	
	\subsubsection{Critical intensities}
	Let us consider the standard definition of the critical percolation intensity for \(\cG^{\beta,\gamma,\alpha,\delta}(\xi_o)\) i.e. 
	\begin{equation}\label{eq:betacr}
		\begin{aligned}
		\beta_c(\gamma,\alpha,\delta)& := \sup\big\{\beta>0\colon \P_o(\sharp\C_{\beta,\gamma,\alpha,\delta}=\infty)=0\big\} \\ 
			& =\sup\big\{\beta>0\colon \lim_{n\to\infty}\P_o\big(o\text{ starts a path of length }n \text{ in }\cG^{\beta,\gamma,\alpha,\delta}\big)=0\big\},
		\end{aligned}
	\end{equation}
	where the function \(\theta(\beta):=\P_o(\sharp\C_{\beta,\gamma,\alpha,\delta}=\infty)\) is also called the percolation function. To prove the existence of a subcritical percolation phase in the standard Boolean model \(\cG^{\beta,\gamma,0,\infty}\) and to obtain the tails of the Euclidean diameter in this phase, Gou\'{e}r\'{e} introduced in~\cite{Gouere08, } a new critical intensity (translated to our setting) as
	\[
		\widehat{\beta}_c(\gamma,\alpha,\delta):=\sup\big\{\beta>0\colon \liminf_{m\to\infty}\P_o\big(\exists \x, \y\in\cX\colon |x|^d<m, |y|^d>2^d m, \ \x\leftrightarrow\y \text{ in }\cG^{\beta,\gamma,\alpha,\delta}\big)=0\big\}.
	\] 
	Generally, \(\widehat{\beta}_c(\gamma,\alpha,\delta)\) can be seen as the critical \emph{annulus-crossing} intensity. {For the Boolean model, this annulus-crossing event can further be interpreted as the event that the Euclidean diameter of the component of the origin exceeds \(2^d m\) when setting the radius of the vertex at the origin to \(m^{1/d}\). Considering such an annulus-crossing event has the advantage that it allows for renormalisation arguments in many situations.} Since \(\widehat{\beta}_c(\gamma,\alpha,\delta)\leq\beta_c(\gamma,\alpha,\delta)\), the positivity of the first critical intensity implies the existence of a subcritical phase. 
	Results for the Boolean model in~\cite{AhlbergTassionTeixeira2018,DCopinRaoufiTassion2020,DembinTassion2022} indicate further that \(\widehat{\beta}_c(\gamma,0,\infty)= \beta_c(\gamma,0,\infty)\). In fact, it is shown in~\cite{AhlbergTassionTeixeira2018} that this equality holds for all radius distributions in \(d=2\) and in~\cite{DembinTassion2022} that \(\widehat{\beta}_c(\gamma,0,\infty)= \beta_c(\gamma,0,\infty)\) in all dimensions \(d\geq 2\) for all but at most countably many \(\gamma\). Hence, in these situations the above upper bound for the Euclidean diameter's tail distributions holds  for all \(\beta<\beta_c(\gamma,0,\infty)\).  

 The above shows that we can effectively think of attaching a growing radius of length \(m^{1/d}\) to the origin in the Boolean model at no extra cost and still observe subcritical behaviour. However, as we can see from our proofs below, this argument is no longer possible in the soft Boolean model. The reason is that there are simply too many long edges in the graph. Assigning a radius of length \(m^{1/d}\) to the origin combined with the long-range effects dramatically increases (by order \(m\)) the number of long edges contained in the component of the origin. We are therefore restricted to work with the original radius of the origin and its original component at all times when dealing with similar events as outlined above in~\ref{stepC} and~\ref{stepD}. {More precisely, as already pointed out in the discussion below Theorem~\ref{thm:MetaMain} and shown in~\cite{jacobJahLu2024}, the number of long edges that alone cross an annulus is determined by the vertex-boundary exponent \(\zeta\), introduced in~\cite{KomjathyLapinskasLengler2021} and the renormalisation can only be applied if \(\zeta<0\). However, \(\zeta<0\) is only satisfied in a smaller parameter regime and there are regimes with \(\widehat{\beta}_c(\gamma,0,\delta)=0\) but \(\beta_c(\gamma,0,\delta)>0\) and that \(\zeta\) does not give the right exponent even if it can be used.} Summarising, for the soft Boolean model the situation of critical intensities is less clear.  
	
	In order to go one step further, let us consider another critical intensity. To prove the existence of a subcritical percolation phase for the age-dependent random connection model \(\cG^{\beta,\gamma,1-\gamma,\delta}(\xi_o)\) of~\cite{GGLM2019} and models dominated by it, the authors in~\cite{GLM2021} consider
	\[
		\beta_1(\gamma,\alpha,\delta):=\sup\Big\{\beta>0\colon \exists c>0 \text{ such that } \P_o\Big(\substack{\o \text{ starts a shortcut-free path in } \cG^{\beta,\gamma,\alpha,\delta}\text{ of length }n \\ \text{ whose end vertex has the smallest mark in the path}}\Big)\leq {\rm e}^{-c n}\Big\}.
	\]
	Here, a path \(P=(\x_0,\x_1,\dots,\x_n)\) is called \emph{shortcut-free} if it contains no shorter subpath \(Q\subset P\) also connecting \(\x_0\) and \(\x_n\). The restriction to shortcut-free paths is possible as the existence of an infinite shortcut-free path is equivalent to the existence of an infinite path since all degrees are finite. However note that in general the existence of a path of length \(n\) does not necessarily imply the existence of a shortcut-free paths of the same length. The idea behind the definition is to make use of the fact that the vertices with highest degree are those with smallest marks which can be seen as the skeleton vertices from \ref{stepA}. To build an infinite path, one may want to use vertices with smaller and smaller marks to find sufficiently many vertices that have not been visited yet to continue the path. However, such a path contains infinitely many finite subpath ending in the vertex with smallest vertex mark. For \(\beta<\beta_1(\gamma,\alpha,\delta)\) only finitely many such subpaths exist by the Borel--Cantelli Lemma. Hence, each potentially infinite path must have marks that are bounded from zero. The latter ultimately implies 
	\[
		\inf\{u_x\colon \x\in\mathscr{C}_{\beta,\gamma,\alpha,\delta}\}>0,
	\]
		which is equivalent to \(\sharp\mathscr{C}_{\beta,\gamma,\alpha,\delta}<\infty\) {since, by stationarity, an infinite subset of the graph cannot avoid a whole interval in the mark space of positive mass.} For the age-dependent random connection model it is then shown in~\cite{GLM2021} that \(\beta_1(\gamma,1-\gamma,\delta)>0\) if \(1/\delta<1/\gamma-1)\) and \(\beta_1(\gamma,1-\gamma,\delta)=0\) if \(1/\delta>1/\gamma-1)\) and the same applies to the soft Boolean model. From our proofs in Lemma~\ref{lem:MainThmLGupperH} and~\ref{lem:PathBoundUpperInterGamma} below it is easy to see that
	\[
		\beta_1(\gamma,0,\delta) \geq 
		\begin{cases}
		  \frac{1}{2^{d\delta+3}+1}\cdot\tfrac{\delta-1}{\omega_d \delta}\big(1-\gamma\tfrac{\delta+1}{\delta}\big), & \text{if }\tfrac{1}{\gamma}\leq 2, \\
		  \tfrac{\delta-1}{\omega_d\delta}\big(\tfrac{1}{1-2\gamma}+\tfrac{1}{1-\gamma}\big)^{-1} \vee  \frac{1}{2^{d\delta+3}+1}\cdot\tfrac{\delta-1}{\omega_d \delta}\big(1-\gamma\tfrac{\delta+1}{\delta}\big), & \text{if } \tfrac{1}{\gamma}> 2,
		  \end{cases}
	\]
	matching the results of~\cite{GLM2021} when their proof is specified to the soft Boolean model. {Note that these are the same bounds observed in~\eqref{eq:boundsForTailBeta} for \(\widetilde{\beta}\). In~\cite{GouereTheret2019}, the authors study the standard Boolean model with ball volumes that have finite second moment (i.e., \(1/\gamma>2\)) and show that subcriticality, i.e., \(\beta<\beta_c(\gamma,0,\infty)\), is equivalent to \(\P_o(\0 \text{ starts paths of length } n)<{\rm e}^{-cn}\). Obviously, the defining event of \(\beta_1\) looks similar but the considered paths are subject to stronger restrictions in order to deal with the stronger influence of vertices in regimes where there is no second moment for the ball volume and the long-range effects.}
 
 	On a technical level, the necessity of \(\beta\) being small enters our proof in \ref{stepB}, \ref{stepC}, and \ref{stepD} when potentially arbitrarily long paths have to be considered. Based on the nature of our proof, one could suspect that the statement of our main theorem holds for all \(\beta<\beta_1(\gamma,0,\delta)\). Particularly in \ref{stepB} only paths ending in its most powerful vertex are considered. Unfortunately, bounding~\eqref{eq:MainThmLGupperHToBound2} requires slightly more than bounding the number of paths occurring in the definition of \(\beta_1(\gamma,0,\delta)\). However,~\eqref{eq:MainThmLGupperHToBound2} is the result of a moment bound. Additionally, while generally the decay of the probability of existence of shortcut-free paths of length \(n\) gives no information about the decay of the probability of a general path, in order to deduce the probability of \(\{\M_\beta>m\}\) one can always reduce to shortcut-free paths. We therefore believe the following for \(\widetilde{\beta}_c\) being the largest intensity, for which the tail bounds of Theorem~\ref{thm:Main} apply. 
		
	\begin{conjecture}\label{conj:beta1}
		 \(\widetilde{\beta}_c \geq  \beta_1(\gamma,0,\delta)\).
	\end{conjecture}	
	
  Further, the restriction to sufficiently small \(\beta\) only enters the upper bound. The lower bounds however do not depend on \(\beta\) and it seems to us that there is no particular reason indicating that increasing \(\beta\) within the subcritical regime would change the optimal strategy leading to the lower bounds. % so that \(\widetilde{\beta}_c=\beta_c(\gamma,0,\delta)\). An immediate consequence would also be \(\beta_1(\gamma,0,\delta)=\beta_c(\gamma,0,\delta)\). 
  We therefore conjecture:
	
	\begin{conjecture}\label{conj:beta2}
		 \(\widetilde{\beta}_c=\beta_c(\gamma,0,\delta)\).
	\end{conjecture}

	\subsubsection{The logarithmic term in the upper bound.}\label{subsec:discussion_log} 
	As already discussed above, the logarithmic correction in Part~(iii) of Theorem~\ref{thm:Main} is a result of \ref{stepD} where edges of length of order \(m/\log m\) must be controlled. However, we believe that this error term is a result of our method and should not appear in the case of Pareto-distributed radii. In fact it seems reasonable to conjecture the following. 
	\begin{conjecture}\label{conj:errorTerm}
		For all \(\beta<\widetilde{\beta}\), \(\delta>1\), and \(1/\delta<1/\gamma-1<\delta\), there exists \(C>0\) such that, for all \(m>1\), we have that
		\[
			\P_o(\M_{\beta}>m\big)\leq C m^{-\tfrac{\delta-1}{\delta}(\tfrac{1}{\gamma}-1)}.
		\]
	\end{conjecture}  
 	The other reason why we believe in Conjecture~\ref{conj:errorTerm} is that we can show that the error term indeed does not occur if \(1/\gamma>2\) and the degree distribution has finite variance, at least in another low-intensity regime. This is due to that fact that, in this case, the expected number of paths decays exponentially, cf.~Lemma~\ref{lem:WholePathBound}, which allows to replace the three steps, \ref{stepB}, \ref{stepC}, and \ref{stepD} with a more direct bound similarly to the proof of Part~(i). 
    
	\begin{prop}\label{thm:UpperBoundSmallGamma}
		Let \(\delta>1\), \(1/\gamma>2\) and \(0<\beta<\tfrac{\delta-1}{\omega_d\delta}\big(\tfrac{1}{1-2\gamma}+\tfrac{1}{1-\gamma}\big)^{-1}\). Then, there exists \(C>0\) such that, for all $m>1$,
		\[
			\P_o(\M_{\beta}>m) \leq C m^{-\tfrac{\delta-1}{\delta}(\tfrac{1}{\gamma}-1)}.
		\]
	\end{prop}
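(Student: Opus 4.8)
The plan is to prove the proposition by a truncated first‑moment (path‑counting) bound, exactly in the spirit of the proof of Proposition~\ref{lem:MainThmSG}\,(ii). Fix the mark‑truncation level $a=a(m):=m^{-(\delta-1)/(\gamma\delta)}$; this is the order of the mark of a vertex whose interaction ball, once enlarged by a typical long‑range edge weight, already spans a distance of order $m^{1/d}$, cf.\ \ref{stepB}. On $\{\M_\beta>m\}$ there is a finite open path $(\o=\z_0,\z_1,\dots,\z_k)$ with $|z_k|^d>m$, and either all its vertex marks exceed $a(m)$, or one of its vertices lies in $\C_\beta$ and has mark at most $a(m)$. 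Hence
\begin{equation*}
  \P_o(\M_\beta>m)\ \le\ \P_o\big(\exists\,\x\in\C_\beta:\ u_x\le a(m)\big)\ +\ \P_o\big(\exists\text{ open path }\o\leftrightarrow\B(m^{1/d})^c\text{ with all marks }>a(m)\big),
\end{equation*}
and it remains to bound each term by a constant times $m^{1-\delta\wedge\tfrac{\delta-1+\gamma}{\gamma\delta}}$.

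For the error term one may invoke Lemma~\ref{lem:upperPowerfulVertex} (\ref{stepB}), which states precisely that the probability of a sufficiently powerful vertex belonging to $\C_\beta$ is of the order of the lower bound in Theorem~\ref{thm:Main}\,(iii); alternatively, since for $\gamma<1/2$ the expected number of open paths of fixed length decays geometrically (Lemma~\ref{lem:WholePathBound}), one bounds it by $\sum_{n\ge1}\E_o[\#\{\text{open $n$-paths from }\o\text{ ending at a vertex of mark }\le a(m)\}]$. By the Mecke equation the $n=1$ contribution equals $\tfrac{\omega_d\delta}{(\delta-1)(1-\gamma)}\beta\,a(m)^{1-\gamma}$, and $a(m)^{1-\gamma}=m^{1-\tfrac{\delta-1+\gamma}{\gamma\delta}}$, while the longer paths add only a convergent geometric factor (using $\beta$ below the stated threshold). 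Since $\tfrac{\delta-1+\gamma}{\gamma\delta}\ge\delta$ precisely when $\gamma\le 1/(\delta+1)$, this is at most a constant times $m^{1-\delta\wedge\tfrac{\delta-1+\gamma}{\gamma\delta}}$ on all of $(0,1/2)$.

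The second term is exactly Lemma~\ref{lem:PathBoundUpperInterGamma}, of which the proposition is then immediate. I would bound it by $\sum_{n\ge1}\E_o[\#\{\text{open $n$-paths }\o\to\B(m^{1/d})^c,\ \text{all marks }>a(m)\}]$ and expand via the Mecke equation. Integrating out the spatial coordinates vertex by vertex turns each edge into a factor $\tfrac{\omega_d\delta}{\delta-1}\beta\,(u\wedge v)^{-\gamma}$, except that the edge which---since $|z_n|\le\sum_i|z_i-z_{i-1}|$---must exceed length $m^{1/d}/n$ contributes instead a factor of order $\beta^\delta(u\wedge v)^{-\gamma\delta}(m/n^d)^{1-\delta}$ from the polynomial tail of $\rho$ (paths so long that the truncation makes this bound fail satisfy $n\gtrsim m^{1/(d\delta)}$ and are suppressed by the geometric decay). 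Integrating the remaining truncated marks then multiplies the bound, per further vertex, by a uniformly bounded factor $\le\tfrac{\omega_d\delta}{\delta-1}\beta\big(\tfrac{1}{1-2\gamma}+\tfrac{1}{1-\gamma}\big)$; this is where $\gamma<1/2$ enters, since a vertex carrying the smallest mark of the path produces a $(u\wedge v)^{-2\gamma}$-type term that is integrable only for $2\gamma<1$. As $\beta<\tfrac{\delta-1}{\omega_d\delta}\big(\tfrac{1}{1-2\gamma}+\tfrac{1}{1-\gamma}\big)^{-1}$, this factor is strictly below one, so the series converges and is governed by its first terms, namely a single long edge or the two-step path through a vertex of mark of order $a(m)$; for the latter, integrating the middle mark against the long-edge factor $\beta^\delta(u\wedge v)^{-\gamma\delta}m^{1-\delta}$ and the factor $\beta(u\wedge v)^{-\gamma}$ of its other incident edge gives $\beta^{1+\delta}m^{1-\delta}\int_{a(m)}^1 u^{-\gamma(\delta+1)}\,\d u$, which is $\asymp m^{1-\tfrac{\delta-1+\gamma}{\gamma\delta}}$ for $\gamma>1/(\delta+1)$ and $\asymp m^{1-\delta}$ for $\gamma<1/(\delta+1)$, the claimed order.

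The whole difficulty is this bookkeeping inside Lemma~\ref{lem:PathBoundUpperInterGamma}: one must carry out the mark integrations precisely enough that the effective per-vertex constant comes out as $\tfrac{\omega_d\delta}{\delta-1}\beta\big(\tfrac{1}{1-2\gamma}+\tfrac{1}{1-\gamma}\big)$, so that the critical intensity is as stated, and that the polynomial-in-$n$ losses from spreading the required exit length $m^{1/d}$ over the $n$ edges are absorbed by the geometric decay; one also has to check that the single truncation level $a(m)$ simultaneously places the mass below it (the error term) and the cost of a long edge abutting a truncated mark (in the path sum) on the common order $m^{1-\delta\wedge\tfrac{\delta-1+\gamma}{\gamma\delta}}$. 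Once $\gamma<1/2$ and the smallness of $\beta$ secure the geometric decay, no renormalisation step is needed, and this is exactly why the logarithmic factor of Theorem~\ref{thm:Main}\,(iii) is absent in this regime: the governing mark integral $\int_{a(m)}^1 u^{-\gamma(\delta+1)}\,\d u$ is a clean power of $m$ away from the threshold $\gamma(\delta+1)=1$.
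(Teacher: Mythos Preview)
Your proposal is correct and follows essentially the same route as the paper: the proposition is obtained by summing Lemma~\ref{lem:PathBoundUpperInterGamma}, whose proof is precisely the truncated first-moment path count you describe, with truncation level $a(m)=s_m=m^{-(\delta-1)/(\gamma\delta)}$, pigeonhole on edge lengths to isolate one long edge, spatial integration, and then mark integration via Lemma~\ref{lem:WholePathBound} giving the per-vertex factor $\tfrac{\omega_d\delta}{\delta-1}\beta\big(\tfrac{1}{1-2\gamma}+\tfrac{1}{1-\gamma}\big)$.

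One small caveat: your first suggestion for the error term, invoking Lemma~\ref{lem:upperPowerfulVertex} directly, does not quite fit here, since that lemma is proved for $\beta<\beta_0$ via the skeleton machinery and the dominating graph $\widehat{\cG}^\beta$, whereas the present proposition allows $\beta$ up to the different threshold $K^{-1}$. Your alternative---redoing the powerful-vertex bound directly with Lemma~\ref{lem:WholePathBound}---is exactly what the paper does (``it is also straightforward to adapt the proof of Lemma~\ref{lem:upperPowerfulVertex} to the case $\gamma<1/2$ and $\beta<K^{-1}$''), and is the correct choice.
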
 
	
	\subsubsection{Cardinality of the  cluster of the origin}
	In Section~\ref{sec:NumberPoints} we derive results for the cardinality of the component for the \(\gamma<1/2\) regime and yet another threshold for \(\beta\). The new threshold is on the one hand a result of the coupling with scale-free percolation coinciding with \(\alpha=\gamma\) in~\eqref{eq:InterpolKern}. On the other hand, we use a coupling of the  component of the origin with a multi-type branching process that requires a finite second moment of the degree distribution and a small enough \(\beta\) to control the exponential growth of integration constants associated with the second moment. While these branching-process arguments work very well for non-spatial random graphs, they have the disadvantage of not seeing the spatial clustering. Hence, we cannot expect to get precise results in a spatial setting where the effect of clustering is highly relevant. The relevance of clustering in the soft Boolean model can be seen for example in the fact that a subcritical phase still exists for parameter regimes with degree distributions with infinite variance (\(1/\gamma>2\)) while non-spatial models are always robust in the infinite-variance regime~\cite{vdH2024}. This is due to the effect that in the associated branching process, the offspring distribution follows the size-biased degree distribution and only has finite first moment, if the original distribution has finite second moment. For scale-free percolation the effect of clustering seems to be less relevant as many changes of behaviour happen again at \(1/\gamma=2\) when the second moment of the degree distribution becomes infinite. Still we do not see any reason why the established tail in Theorem~\ref{thm:SFP} should not be valid in the whole subcritical regime.
	\begin{conjecture}Consider $\beta_c(\gamma,\alpha,\delta)$ as defined in~\eqref{eq:betacr}.
		\begin{enumerate}[(i)]
			\item Consider the soft Boolean model \(\cG^{\beta,\gamma,0,\delta}\) for \(\delta>1\), \(1/\delta<1/\gamma-1\), and \(\beta<\beta_c(\gamma,0,\delta)\). Then, there exists a constant \(C\in(0,\infty)\) such that, for all \(m>1\),
			\[
				\P_o(\scN_{\beta,\gamma,0,\delta}\geq m) \leq C m^{1-1/\gamma}.
			\]
			\item Consider scale-free percolation \(\cG^{\beta,\gamma,\gamma,\delta}\) for \(\delta>1\), \(1/\gamma\geq 2\), and \(\beta<\beta_c(\gamma,\gamma,\delta)\). Then, there exists a constant \(C\in(0,\infty)\) such that, for all \(m>1\),
			\[
				\P_o(\scN_{\beta,\gamma,\gamma,\delta}\geq m) \leq C m^{1-1/\gamma}.
			\]
		\end{enumerate}
	\end{conjecture}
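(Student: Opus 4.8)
These need no new argument: both $\cG^{\beta,\gamma,0,\delta}$ and $\cG^{\beta,\gamma,\gamma,\delta}$ stochastically dominate the classical Boolean model $\cG^{\beta,\gamma,0,\infty}$ (because $W\geq 1$ and $R_xR_y\geq R_x\vee R_y$), for which $\P_o(\scN\geq m)\geq cm^{1-1/\gamma}$ is known from \cite{JahnelAndrasCali2022}; directly, the event that some Poisson point within distance $O(m^{1/d})$ of the origin carries radius of order $m$ already forces at least $m$ vertices into the cluster. So the entire content of the conjecture is to upgrade the \emph{upper} bounds of Theorems~\ref{thm:NumberPoints} and~\ref{thm:SFP} from $\beta<\widetilde\beta_c$ to all $\beta<\beta_c$ and, in part~(i), from $\gamma<1/2$ to $\gamma<\delta/(\delta+1)$.

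\textbf{Decomposition.} Fix a large constant $K$, set $\varepsilon_m:=Km^{-1/\gamma}$, and split over the smallest mark $u^\ast:=\min\{u_\x\colon\x\in\C_\beta\}$ in the cluster of the origin,
\[
\P_o(\scN_\beta\geq m)\leq\P_o(u^\ast\leq\varepsilon_m)+\P_o\big(\scN_\beta\geq m,\ u^\ast>\varepsilon_m\big).
\]
For the first term one should \emph{not} bound $\#\{\x\in\C_\beta\colon u_\x\leq\varepsilon_m\}$ in first moment --- its expectation is infinite once $\gamma\geq1/2$ --- but rather pick $\x^\ast$ to be the cluster vertex of smallest mark and count the shortcut-free paths from $\o$ to $\x^\ast$ whose endpoint is the path's most powerful vertex, i.e.\ exactly the paths behind the critical value $\beta_1(\gamma,0,\delta)$ discussed in Section~\ref{sec:openProblems}. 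A refinement of the path counts of Lemmas~\ref{lem:MainThmLGupperH} and~\ref{lem:PathBoundUpperInterGamma}, now with endpoint-mark threshold $u\asymp m^{-1/\gamma}$ instead of the $m^{-(\delta-1)/(\gamma\delta)}$ relevant for the diameter, should give $\P_o(u^\ast\leq u)\leq C(\beta)u^{1-\gamma}$ for all $\beta<\beta_1(\gamma,0,\delta)$ --- hence (with $u=\varepsilon_m$) a contribution of order $m^{1-1/\gamma}$ --- and this part works for every $\gamma<\delta/(\delta+1)$ since $\beta_1(\gamma,0,\delta)>0$ on that whole range. On the second event the cluster of the origin coincides with the cluster of the origin in the $m$-dependent truncated model obtained from $\cG^\beta$ by deleting all vertices of mark $\leq\varepsilon_m$, i.e.\ by capping radii at $K^{-\gamma}m$; the task reduces to showing $\P_o(\scN^{(\varepsilon_m)}_\beta\geq m)\leq Cm^{1-1/\gamma}$.

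\textbf{The truncated model.} After the cap, every degree is a sum of independent Bernoulli variables with conditional mean at most $c\beta r\leq c\beta K^{-\gamma}m$, a small fraction of $m$ for $K$ large, and the degree distribution has \emph{finite} second moment $\asymp m^{2-1/\gamma}$. When $\gamma<1/2$ this second moment tends to $0$, so the multi-type branching-process coupling used to prove Theorems~\ref{thm:NumberPoints} and~\ref{thm:SFP}, together with the exponential path bound of Lemma~\ref{lem:WholePathBound}, applies to the truncated model and yields the required $m^{1-1/\gamma}$ tail; the only thing missing is that this coupling is currently established only for $\beta<\widetilde\beta_c$. For scale-free percolation $\cG^{\beta,\gamma,\gamma,\delta}$ this is the \emph{whole} obstruction, because that model has a subcritical phase only for $\gamma\leq1/2$ and thus never leaves the finite-variance regime: an affirmative answer to the analogue of Conjecture~\ref{conj:beta2} for $\alpha=\gamma$ gives part~(ii) at once, and with it part~(i) on the sub-range $\gamma<1/2$, $\beta<\beta_c(\gamma,\gamma,\delta)$. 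When $1/2\leq\gamma<\delta/(\delta+1)$, however, the truncated second moment $m^{2-1/\gamma}$ \emph{diverges}, so the associated branching process is eventually supercritical and every first-moment/branching argument collapses, even though $\cG^\beta$ itself remains subcritical for a purely spatial reason.

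\textbf{Main obstacle.} It is twofold. First, passing from $\widetilde\beta_c$ (or $\beta_1$) up to $\beta_c$ is precisely the sharp-phase-transition problem flagged in Section~\ref{sec:openProblems}: the present path-counting estimates degrade as $\beta\uparrow\beta_c$, and closing the gap would most plausibly require an OSSS/randomised-algorithm input adapted to the weight-dependent random connection model --- the content of Conjectures~\ref{conj:beta1}--\ref{conj:beta2}. Second, and the step I expect to be genuinely hard, is controlling $\P_o(\scN^{(\varepsilon_m)}_\beta\geq m)$ for $1/2\leq\gamma<\delta/(\delta+1)$: here one must bound the capped cluster without the branching structure, isolating the spatial thinning --- the clustering that, unlike in non-spatial inhomogeneous random graphs, keeps the model subcritical past $\gamma=1/2$. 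I would attempt this through a renormalisation/multi-scale scheme tracking vertex positions as well as marks, in the spirit of the upper-bound argument of Section~\ref{sec:MainThmLGupper} for the diameter but now engineered to count vertices rather than to reach far distances; that adaptation is the principal difficulty.
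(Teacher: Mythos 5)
The statement you are addressing is stated in the paper as a \emph{conjecture}: the paper offers no proof of it, only a heuristic discussion in Section~\ref{sec:openProblems} ending with the admission that the authors ``couldn't find a convincing way yet'' to close the argument. Your proposal, correctly, also does not claim to prove it --- it is a strategy sketch with explicitly flagged gaps --- so there is no complete proof on either side to compare. What can be said is that your sketch is sound as far as it goes and is consistent with the paper's own results: the first term of your decomposition, $\P_o(u^\ast\leq\varepsilon_m)\lesssim\varepsilon_m^{1-\gamma}\asymp m^{1-1/\gamma}$, is exactly the mechanism of Lemma~\ref{lem:upperPowerfulVertex} run with threshold $m^{-1/\gamma}$ in place of $s_m$, and your diagnosis that the truncated model has degree second moment $\asymp m^{2-1/\gamma}$, hence a branching comparison that collapses for $\gamma\geq 1/2$, accurately explains why Theorem~\ref{thm:NumberPoints} stops at $\gamma<1/2$.

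Where you and the paper diverge is in the proposed route through the hard regime. You suggest attacking the truncated cluster for $1/2\leq\gamma<\delta/(\delta+1)$ by a multi-scale scheme counting vertices rather than distances; the paper's heuristic instead views the component as a collection of classical Boolean clusters (each of cardinality at most $m/\log m$ by Gou\'er\'e's method, on the complement of the powerful-vertex event) glued by long-range edges, and reduces the problem to showing that at least $\log m$ such clusters cannot be chained together without producing graph-distance paths longer than $\log m$ --- the obstruction there being correlations between the long-range edges when the Boolean clusters are treated as nodes of a long-range percolation model. Both routes stall at a genuine difficulty, and your second obstacle (the gap between $\widetilde\beta_c$, or $\beta_1$, and $\beta_c$) is precisely the content of Conjectures~\ref{conj:beta1} and~\ref{conj:beta2}. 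In short: your assessment is accurate and your strategy is a legitimate alternative to the paper's sketch, but neither constitutes a proof, and the statement remains open.
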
   
	
	Let us further comment on Part~(i) of the conjecture. As already mentioned above,
	to achieve a cardinality of order \(m\) with a single vertex, a neighbour with radius of order \(m^{1/d}\) is required just like in the standard Boolean model, which is why the tails (of the lower bound) coincide in both models. To prove a matching upper bound for the soft model, one can easily adapt \ref{stepB} to vertices of that strength in the component of the origin. Further, our result for the Euclidean diameter can be used to derive that the whole component is contained in a ball of volume \(m^{\delta/(\delta-1)}\), with an error probability of the right order. Now, one can think of the  component of the origin in the soft model as a collection of classical Boolean clusters, connected with each other via long-range edges. On the event that no powerful vertex is present in the component of the origin, one can use Gou\'{e}r\'{e}'s method to deduce that no Boolean cluster in the considered ball has cardinality larger than \(m/\log(m)\) implying that the the  component of the origin decomposes in at least \(\log m\) many clusters when the long-range edges are removed. Additionally, we can adapt the bound on long paths of \ref{stepD} to observe with the right error probability that the component's depth is at most \(\log m\). Hence, it remains to prove that it is unlikely enough to connect at least \(\log m\) small Boolean clusters with long-range edges without creating shortest paths longer than \(\log m\). Since the tail of the cardinality in long-range percolation decays exponentially fast one would get the desired result if one thought of the Boolean clusters as nodes of an independent long-range percolation model. Unfortunately, this ignores correlations between the edges and we could not find a convincing way yet to control these. We do however believe that it should be possible.

%%%%%%%%%%%%%%%%%%%%%%%%%%%%%%%%
%%%%%%%% Proofs%%%%%%%%%%%%%%%%%
%%%%%%%%%%%%%%%%%%%%%%%%%%%%%%%%
\section{Proofs}\label{sec:proofs}
In this section we present the proofs of our results. We will use the common notation \(f\sim g\) for two positive functions satisfying \(f(x)/g(x)\to 1\) as \(x\to 0\), and \(f=o(g)\), if \(f(x)/g(x)\to 0\) as well as \(g\asymp f\), if \(f/g\) is bounded from zero and infinity. We further denote by \(B(x,r)\) the ball of radius \(r\) centred in \(x\). If \(x=o\), we simply write \(B(r):=B(o,r)\) for the ball around the origin and by \(\omega_d\) the volume of \(B(1)\). For a Borel set \(B\subset\R^d\), we also write \(B^c=\R^d\setminus B\), and we use the same notation for the complement of an event as usual. Finally, we denote by \(\sharp A\) the number of elements in an at most countable set $A$.

%%%%%%%%%%%%%%%%%%%%%%%%
%%%%%% Lower bound %%%%%%%%%
%%%%%%%%%%%%%%%%%%%%%%%%
\subsection{Proof of the lower bounds in Theorem~\ref{thm:Main}} \label{sec:MainThmLower}
We formally prove the two strategies to connect the origin to a vertex at a large distance in this section outlined above. The first strategy is to use a single long edge incident to the origin. This strategy dominates on the long-range percolation model. The second strategy dominates on the classical Boolean model case where a powerful intermediate vertex connected to both the origin and the distant vertex is used. Both strategies combined give the lower bounds of our theorem. 

We start with the proof of the `long-range percolation' case when only one long edge is used giving the lower bound in Part~(i). 
\begin{prop}\label{lem:MainThmSG}
		Let \(\beta>0\), \(\delta>1\), and \(\gamma\in(0,1)\) and consider \(\cG^\beta_\0=\cG^{\beta,\gamma,0,\delta}(\xi_\0)\). Then, for all \(m>\beta\), 
				\[
					\P_o\big(\exists \x=(x,u_x)\colon |x|^d>m\text{ and }\x\sim\0 \text{ in }\cG^\beta_\o\big)\geq 1-\exp\big(-\tfrac{\beta^\delta \omega_d}{\delta-1}m^{1-\delta}\big)\sim \tfrac{\beta^\delta \omega_d}{\delta-1}m^{1-\delta}.
				\]
	\end{prop}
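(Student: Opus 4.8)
The statement is a lower bound on the probability that the origin has at least one neighbour at Euclidean distance exceeding $m^{1/d}$. The plan is to compute the probability of the complementary event --- that no such long edge exists --- directly from the Poisson structure and the explicit connection probability.

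First I would condition on the vertex mark $U_o$ of the origin and on the edge marks, and recall from the construction~\eqref{eq:edgeSet} that, given $\cX$, the events $\{\x_i\sim\o\}$ are independent across $i$ with $\P(\x_i\sim\o\mid\cX)=\rho_\delta\big(\beta^{-1}g_{\gamma,0}(U_o,U_i)|X_i|^d\big)$. By the marking theorem, the points $(X_i,U_i)$ of $\cX$ together with the independent retention probabilities form a Poisson process, so the number of neighbours of $\o$ at distance $|x|^d>m$ is Poisson distributed with parameter
\[
	\Lambda_m := \int_0^1\int_{\{|x|^d>m\}} \rho_\delta\big(\beta^{-1}(u_o\wedge u)^\gamma |x|^d\big)\,\d x\,\d u,
\]
and the sought probability equals $1-\E_o[\exp(-\Lambda_m)]$. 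Since $\exp(-\Lambda_m)\le 1$ and, by Jensen or simply monotonicity, I would aim for $1-\E_o[\exp(-\Lambda_m)]\ge 1-\exp(-\E_o\Lambda_m)$ --- actually the cleaner route is to bound $\Lambda_m$ from below deterministically by dropping the mark interaction, using $(u_o\wedge u)^\gamma\le 1$, so that $\rho_\delta(\beta^{-1}(u_o\wedge u)^\gamma|x|^d)\ge\rho_\delta(\beta^{-1}|x|^d)=1\wedge(\beta/|x|^d)^\delta$, uniformly in the marks.

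With that bound, for $|x|^d>m>\beta$ we have $\rho_\delta(\beta^{-1}|x|^d)=(\beta/|x|^d)^\delta$, so passing to polar-type coordinates via the substitution $r=|x|^d$ (the volume of $\{|x|^d\in\d r\}$ is $\omega_d\,\d r$) gives
\[
	\Lambda_m \ge \beta^\delta\omega_d\int_m^\infty r^{-\delta}\,\d r = \frac{\beta^\delta\omega_d}{\delta-1}\,m^{1-\delta}.
\]
Hence the probability that $\o$ has no neighbour beyond distance $m^{1/d}$ is $\E_o[\exp(-\Lambda_m)]\le\exp\big(-\tfrac{\beta^\delta\omega_d}{\delta-1}m^{1-\delta}\big)$, which after taking complements yields exactly the claimed inequality $1-\exp\big(\tfrac{\beta^\delta\omega_d}{\delta-1}m^{1-\delta}\big)$ --- wait, the sign: the complement probability is $1-\E_o[\exp(-\Lambda_m)]\ge 1-\exp\big(-\tfrac{\beta^\delta\omega_d}{\delta-1}m^{1-\delta}\big)$, matching the statement once one reads $\exp(\tfrac{\beta^\delta\omega_d}{\delta-1}m^{1-\delta})$ there as the typeset form of $\exp(-c m^{1-\delta})$ with $c$ absorbing the sign. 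The asymptotic equivalence $\sim\tfrac{\beta^\delta\omega_d}{\delta-1}m^{1-\delta}$ is then immediate from $1-e^{-t}\sim t$ as $t\to0$, since $m^{1-\delta}\to0$ as $m\to\infty$ because $\delta>1$.

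There is no serious obstacle here; this is a routine first-moment / void-probability computation. The only points requiring mild care are: (a) justifying the Poissonisation of the thinned neighbour process --- this is standard and follows from the independent edge marking construction in Section~\ref{sec:Construction}; and (b) making sure the bound is uniform in $U_o$ (and hence survives the integration $\P_o=\int\P_{(o,u)}\,\d u$), which is why I discard the $(u_o\wedge u)^\gamma$ factor rather than trying to compute the mark integral exactly. Keeping $\gamma$ general (not just $\gamma<1/(\delta+1)$) is automatic since the argument never uses the radii beyond the crude bound $(u_o\wedge u)^\gamma\le 1$.
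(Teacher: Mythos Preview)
Your proof is correct and essentially identical to the paper's. The paper phrases the reduction as a graph coupling --- since $g_{0,0}(s,t)=1\geq g_{\gamma,0}(s,t)$, every edge of the long-range model $\cG^{\beta,0,0,\delta}$ is also present in $\cG^\beta$ --- and then computes the Poisson parameter for the number of far neighbours in the long-range model; your pointwise bound $(u_o\wedge u)^\gamma\leq 1$ on the connection probability is exactly the same step in analytic form, and you correctly note that this makes the lower bound on $\Lambda_m$ uniform in $u_o$ (so the Jensen detour, which indeed goes the wrong way, is unnecessary). You also correctly spotted the missing minus sign in the displayed exponential.
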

\begin{proof}		
	Since a version of the random connection model is given by choosing \(\gamma=\alpha=0\) in the interpolation kernel~\eqref{eq:InterpolKern} and \(g_{0,0}(s,t)= 1\geq g_{\gamma,0}(s,t)\), each edge that is present in a realisation of \(\cG^{\beta,0,0,\delta}(\xi_\0)\) is also present in \(\cG^\beta_\0\) by the construction rule~\eqref{eq:edgeSet}. (Note that the profile function \(\rho\) is decreasing and therefore a smaller value of \(g_{\gamma,\alpha}(U_i,U_j)\) increases the probability of having an edge.) Hence,
		\begin{equation*}
			\begin{aligned}
				\P_o\big(\exists \x=(x,u_x)\colon |x|^d>m\text{ and }\x\sim\0 \text{ in }\cG^\beta_\o\big) & \geq \P_o\big(\exists \x=(x,u_x)\in\cX\colon  |x|^d>m\text{ and }\x\sim\0 \text{ in }\cG_\o^{\beta,0,0,\delta}\big).
			\end{aligned}
		\end{equation*}
		Since the number of the  neighbours of the origin in \(\cG^{\beta,0,0,\delta}(\xi_\0)\) at distance at least \(m^{1/d}\) is Poisson distributed with parameter
		\begin{equation*}
			\begin{aligned}
				\int\limits_{|x|^d>m}\d x \ \rho(\beta^{-1}|x|^d)=\beta^\delta \int\limits_{|x|^d>m} \d x \ |x|^{-d\delta} = \tfrac{\beta^\delta \omega_d}{\delta-1} m^{1-\delta},
			\end{aligned}
		\end{equation*}
		assuming \(m>\beta\) in the first equality, we deduce
		\begin{equation*}
			\begin{aligned}
				\P_o(\M_\beta>m) & \geq 1-\exp\big(-\tfrac{\beta^\delta \omega_d}{\delta-1} m^{1-\delta}\big),
			\end{aligned}
		\end{equation*} 
		yielding the desired lower bound. 
	\end{proof}
	
	While in long-range percolation the order of the longest edge determines the order of the Euclidean diameter, this order is determined by the largest radius of a vertex connected to the origin in the classical Boolean model~\cite{Gouere08} {as explained below Theorem~\ref{thm:Main}.
	The following proposition makes the given heuristic precise and shows that a vertex requires a ball of volume no less than \(m^{(\delta-1)/\delta}\) in order to connect to distance \(m\), and that the ball volume in the neighbourhood of the root follows the size-biased volume distribution. For computational convenience, we restrict to vertices with large volume but no larger than \(m\). Recall that the associated volume of vertex \(\x\) is given as \(u_x^{-\gamma}\), and write 
	\begin{equation*}
		\zeta := (\delta-1)/\delta \quad \text{ and } \quad s_m:= m^{\zeta}
	\end{equation*}
	in the following. Note that the exponent in Part~(iii) of Theorem~\ref{thm:Main} can be written as \(-(1-\gamma)\zeta/\gamma\).}
	
	\begin{prop}\label{lem:MainThmLGLower}
	Let \(\beta>0\), \(\delta>1\), and \(\gamma>0\) and consider \(\cG_o^\beta=\cG^{\beta,\gamma,0,\delta}(\xi_\0)\). Then, there exists \(M>0\) such that, for all \(m\geq M\), we have
		\begin{equation*} 
			\begin{aligned}
				\P_o\big(\exists \x,\y\colon  |x|^d<m, s_m\leq u_x^{-\gamma}\leq m, m<|y|^d<2 m, u_y^{-\gamma}<2^{\gamma} \text{ and }\0\sim\x\sim\y\big) & \geq 1-{\rm e}^{- c_5 m^{-(1-\gamma)\zeta/\gamma}} \\ &\sim c_5 m^{-(1-\gamma)\zeta/\gamma},
			\end{aligned}
		\end{equation*}
	where \(c_5\) is given below in~\eqref{eq:MainThmLGLowerConst}. 
	\end{prop}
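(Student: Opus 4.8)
The plan is to realise the event by a two-step path $\0 \sim \x \sim \y$ through a single "moderately powerful" intermediate vertex $\x$, and to lower-bound its probability via a Poisson count of suitable candidate vertices $\x$, just as in Proposition~\ref{lem:MainThmSG} but with the extra spatial integration over $\x$ that the Boolean-type strategy requires. Concretely, I would first fix the target annulus $A_m := \{y : m < |y|^d < 2^d m\}$ and observe that it suffices to produce one vertex $\x = (x, u_x)$ with $|x|^d < m$ and $u_x \le s_m$ that is simultaneously connected to $\0$ and to \emph{some} vertex $\y \in \cX$ with $y \in A_m$ and $u_y > 1/2$. The number of such $\x$ in $\cX$ is, conditionally on nothing, governed by a thinning of the Poisson process $\cX$; the point is to bound below the probability that this thinned process is non-empty by $1 - e^{-\Lambda_m}$ where $\Lambda_m$ is the intensity, and then check $\Lambda_m \sim c_5 m^{-(1-\gamma)\zeta}$.

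The key computation is the intensity $\Lambda_m$. Writing the connection probabilities explicitly from~\eqref{eq:edgeSet} with $\alpha = 0$: given $\x$ with mark $u_x \le s_m$ at location $x$ with $|x|^d < m$, the probability that $\0$ (with its own mark $u_o$) connects to $\x$ is $\rho\big(\beta^{-1}(u_o \wedge u_x)^\gamma |x|^d\big) \ge \rho\big(\beta^{-1} u_x^\gamma |x|^d\big) \ge \rho(\beta^{-1} s_m^\gamma m) = \rho(\beta^{-1} m^{1 - \gamma\zeta})$, and since $1 - \gamma\zeta = 1/\delta$ this is $\rho(\beta^{-1} m^{1/\delta}) = \beta^\delta m^{-1}$ for $m$ large. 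Next, given such an $\x$, the expected number of vertices $\y \in \cX$ with $y \in A_m$, $u_y \in (1/2, 1)$, and $\y \sim \x$ is $\int_{1/2}^1 \d u_y \int_{A_m} \d y\; \rho\big(\beta^{-1}(u_x \wedge u_y)^\gamma |x - y|^d\big) \ge \tfrac12 \int_{A_m} \d y\; \rho(\beta^{-1} u_x^\gamma |x - y|^d)$; since $u_x \le s_m$ and $|x - y|^d \le (|x| + |y|)^d \le (2 \cdot 2^{1/d})^d m = 2^{d+1} m$ for $|x|^d < m$ and $y \in A_m$, the argument of $\rho$ is at most $\beta^{-1} s_m^\gamma 2^{d+1} m = 2^{d+1}\beta^{-1} m^{1/\delta}$, so the integrand is bounded below by a constant times $\beta^\delta m^{-1}$ uniformly, giving a lower bound $\asymp \beta^\delta m^{-1} \cdot |A_m| \asymp \beta^\delta m^{-1} \cdot \omega_d(2^d - 1)m = c\,\beta^\delta$, a constant. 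Hence the probability that $\x$ has at least one such neighbour $\y$ is bounded below by $1 - e^{-c\beta^\delta} =: p_\beta > 0$. Finally, by the multivariate Mecke equation the expected number of admissible $\x$ that are connected to both $\0$ and some admissible $\y$ is at least
\[
  p_\beta \int_{|x|^d < m} \d x \int_0^{s_m} \d u_x\; \rho\big(\beta^{-1} u_x^\gamma |x|^d\big)
  \;\ge\; p_\beta \int_{|x|^d < m} \d x \int_0^{s_m} \d u_x\; \big(\beta^{-1} u_x^\gamma |x|^d\big)^{-\delta}\mathbbm{1}\{\beta^{-1}u_x^\gamma|x|^d > 1\}.
\]
Carrying out the $u_x$-integral ($\int_0^{s_m} u_x^{-\gamma\delta}\,\d u_x = \tfrac{1}{1-\gamma\delta} s_m^{1-\gamma\delta}$, using $\gamma\delta < 1$ which follows from $\gamma < \delta/(\delta+1) < 1/\delta$... one must be slightly careful here, but in the relevant regime $\gamma\delta < 1$ holds) and then the spatial integral over $|x|^d < m$ yields, after bookkeeping, a quantity $\asymp \beta^{\delta} m^{-\zeta(1-\gamma\delta)/\gamma}\cdot m = c_5 m^{1 - \zeta(1-\gamma\delta)} = c_5 m^{-(1-\gamma)\zeta}$, where the last equality uses $\zeta\gamma\delta = \delta - 1$ so $1 - \zeta + \zeta\gamma\delta = 1 - \zeta(1-\gamma\delta)$... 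I would track the exponent algebra carefully to confirm it collapses to $-(1-\gamma)\zeta$. This expression is the constant $c_5$ of~\eqref{eq:MainThmLGLowerConst}.

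To pass from the expected number of such $\x$ to the probability that at least one exists, I would note that the collection of admissible $\x$ together with their connection status to $\0$ and to an admissible $\y$ is again (a thinning of) a Poisson process — the events "$\x \sim \0$" and "$\x$ has an admissible neighbour $\y$" depend on disjoint families of edge marks and on the Poisson process away from $\x$, so conditionally on $\x$ being present and on $\0$, these are independent thinnings — hence the number of good $\x$ is Poisson with mean $\Lambda_m \ge c_5 m^{-(1-\gamma)\zeta}$, and the probability it is positive is $\ge 1 - e^{-\Lambda_m} \ge 1 - e^{-c_5 m^{-(1-\gamma)\zeta}}$, which is $\sim c_5 m^{-(1-\gamma)\zeta}$ as $m \to \infty$. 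The main obstacle I anticipate is not conceptual but bookkeeping: making the thinning/independence argument clean enough that the Poisson structure (and hence the $1 - e^{-\Lambda_m}$ bound rather than merely a first-moment bound) is genuinely justified — in particular handling the random mark $u_o$ of the origin (which only helps, since $u_o \wedge u_x \le u_x$) and ensuring the lower bound on the per-$\x$ neighbour probability is uniform over the admissible region so that it factors out of the integral. The exponent algebra $1 - \zeta(1 - \gamma\delta) = -(1-\gamma)\zeta$ via $\zeta = (\delta-1)/(\gamma\delta)$ should be double-checked but is routine.
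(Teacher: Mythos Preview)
Your overall strategy is the same as the paper's: produce a moderately powerful intermediate vertex \(\x\), show it connects to \(\0\) and to some \(\y\) in the target annulus with probability bounded below by a constant, then count such \(\x\) via a Poisson argument. However, two of the execution steps contain genuine errors.

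\textbf{The integral computation.} After writing
\[
\int_{|x|^d<m}\d x\int_0^{s_m}\d u_x\;\big(\beta^{-1}u_x^\gamma|x|^d\big)^{-\delta}\mathbbm{1}\{\beta^{-1}u_x^\gamma|x|^d>1\},
\]
you drop the indicator and evaluate \(\int_0^{s_m}u_x^{-\gamma\delta}\,\d u_x\). Without the indicator the spatial integral \(\int_{|x|^d<m}|x|^{-d\delta}\,\d x\) diverges at the origin, so this separation is not legitimate. Your justification that \(\gamma\delta<1\) ``follows from \(\gamma<\delta/(\delta+1)<1/\delta\)'' is also false: the inequality \(\delta/(\delta+1)<1/\delta\) fails for all \(\delta\ge(1+\sqrt5)/2\), and in any case the proposition is stated for all \(\gamma>0\) with no such constraint. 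Finally, your exponent algebra does not close: \(1-\zeta(1-\gamma\delta)=\delta-\zeta\), which is not equal to \(-(1-\gamma)\zeta\). The paper avoids all of this by the substitution \(z=(\beta^{-1}u_x^\gamma)^{1/d}x\), which turns the spatial integral into \(\beta u_x^{-\gamma}\int\rho(|z|^d)\,\d z\) and leaves only \(\int u_x^{-\gamma}\,\d u_x\asymp s_m^{1-\gamma}\); this works uniformly in \(\gamma\in(0,1)\). (If you keep the indicator and integrate carefully you in fact recover the same \(u_x^{-\gamma}\) and the correct order, but not via the route you wrote.)

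\textbf{The Poisson thinning.} Your claim that the set of ``good'' \(\x\) is a Poisson process is not justified as stated: the event ``\(\x\) has an admissible neighbour \(\y\)'' depends on the common Poisson cloud of candidate \(\y\)'s in the annulus, which is shared across different \(\x\), so the thinning is not independent. The paper repairs this by first conditioning on the high-probability event \(\{\sharp\mathbf Y'\ge m\omega_d/4\}\), where \(\mathbf Y'\) is the set of candidate targets; once the target set is fixed, each \(\x\) connects to some \(\y\in\mathbf Y'\) with probability at least a constant \(c\), and these events are independent across \(\x\) because they involve disjoint edge marks. Only then is the thinned process of good \(\x\) genuinely dominated below by a Poisson process of intensity \(c\,\rho(\beta^{-1}u^\gamma|x|^d)\). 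The paper also restricts to \(u_x>(3m/\beta)^{-1/\gamma}\) so that the spatial integral after change of variables is over a region of diameter tending to infinity, making the limiting constant \(\omega_d\delta/(\delta-1)\) appear cleanly.
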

	\begin{proof}
		Since the considered event is monotone under the law of \(u\mapsto \P_{(o,u)}\) in the sense that the smaller the mark, thus the larger the volume, of the origin the more connections it forms and hence the likelier the occurrence of the considered event, and the fact that we aim for a lower bound, we work under \(\P_{(o,1)}\) in this proof. That is, we set the  mark of the origin to be \(1\). Let now $\mathscr{Y}\subset \cX$ be the set of all vertices $(y,u_y)$ with \(m < |y|^d< 2 m\) and \(u_y^{-\gamma}<2^\gamma\Leftrightarrow u<1/2\). Then, $\E[ \sharp \mathscr Y] = m \omega_d/2$ and a standard Poisson tail bound gives the existence of a constant \(\tilde c>0\) such that
  		\[ 
  			\P(\sharp\mathscr{Y} \geq m \omega_d/4) \geq 1- {\rm e}^{-\tilde c m \omega_d }.
  		\]
 		Now, consider a vertex \(\x = (x,u_x)  \in \cX \) with \(|x|^d < m\) and {associated volume \(s_m\leq u_x^{-\gamma}\leq m\).}  
 		Then, for $m$ sufficiently large, the probability that this vertex \(\x\) is connected to a particular vertex \(\y \in \mathscr{Y}\) is, by the monotonicity of $\rho$, lower bounded by
  		\[ 
  			\rho (\beta^{-1} u_x^\gamma |x - y|^d )  \geq \rho (\beta^{-1} u_x^\gamma 3m) \geq (\tfrac{\beta}{3})^\delta u_x^{-\gamma\delta} m^{-\delta}\geq (\tfrac{\beta}{3})^\delta m^{-1},  
  		\]
  		using the definition of \(s_m\). Thus, on the event 
  		\(\{\sharp \mathscr{Y} \geq m\omega_d/4\}\), the number of vertices \(\y\in\mathscr{Y}\) that are connected to \(\x\) is bounded from below by a Binomial random variable with \(m\omega_d/4\) trials and success probability \((\beta/3)^\delta m^{-1}\). Therefore, by Poisson approximation, we infer for sufficiently large \(m\) that the probability of existence of at least one such \(\y\) is no smaller than 
  		\[
  			1-\exp\big(-\beta^\delta \omega_d/4^{\delta+1} \big)=:c.
  		\] 
  		{This lower bound does not depend on the properties of the vertices in \(\mathscr{Y}\) nor on \(\x\) itself. Further, the neighbours of \((o,1)\) located in \(B(m^{1/d})\) with associated volume in \([s_m,m]\) form a Poisson process of intensity \(\rho(\beta^{-1}u_x^\gamma |x|^d)\d u_x \, \d x\), which is independent of \(\mathscr{Y}\). Therefore, the set of these neighbours that also have a neighbour in \(\mathscr{Y}\) stochastically dominate the neighbourhood Poisson process, where each vertex is independently retained with probability \(c\) and removed otherwise. Recalling that the ball volume associated to \(\x\) is given as \(u_x^{-\gamma}\) for \(u_x\) uniformly distributed, the expected number of vertices contained in this Poisson point process is given by
		\begin{equation*}
			\begin{aligned}
				c\int\limits_{m^{-1/\gamma}}^{s_m^{-1/\gamma}} \d u_x \int\limits_{|x|^d<m} \d x \, \rho(\beta u_x^{\gamma}|x|^d) 
				&
					\geq c \beta \int\limits_{m^{-1/\gamma+\varepsilon}}^{s_m^{-1/\gamma}} \d u \, u_x^{-\gamma}\Big( \int\limits_{|x|^d<u_x^\gamma m}\d x \, \rho(|x|^d)\Big)
				\\ &
					\geq \tfrac{c \beta \omega_d \delta}{2(1-\gamma)(\delta-1)}s_m^{(1-\gamma)/\gamma},
			\end{aligned}
		\end{equation*}	
		for any \(0<\varepsilon<1/(\gamma\delta)\) and large enough \(m\), using \(\int\limits_{|x^d|<u^\gamma m} \rho(|x|^d)\to \omega_d\delta/(\delta-1)\), as \(u^\gamma m\to\infty\), by choice of \(s_m\) and \(\varepsilon\). As the dominated Poisson process is non-empty with probability \(1-\exp(- c_5 m^{(1-\gamma)\zeta/\gamma})\)}, where
		\begin{equation}\label{eq:MainThmLGLowerConst}
			c_5 = \tfrac{c \beta \omega_d \delta}{2(1-\gamma)(\delta-1)}\big(1-\exp(-\tfrac{\beta^\delta \omega_d}{4^{\delta+1}})\big),
		\end{equation}	
        the proof is finished.
	\end{proof}
		
	\begin{proof}[Proof of the lower bounds in Theorem~\ref{thm:Main}.]
			Since {\(1-\delta\leq -(1-\gamma)\zeta/\gamma\) if and only if \(\delta\leq 1/\gamma-1\)}, the lower bound in Part~(i) is a consequence of Proposition~\ref{lem:MainThmSG}, the lower bound in Part~(iii) is a consequence of Proposition~\ref{lem:MainThmLGLower}, and the lower bound in Part~(ii) is a consequence of the combination of both, where the appearing constant is given by 
			\begin{equation} \label{eq:MainThmBClowerConst}
				c_3:= c_1 \vee c_5.
			\end{equation}
			This concludes the proof of the lower bounds.
	\end{proof}
	
%%%%%%%%%%%%%%%%%%%%%%%%
%%%%% Upper Bound %%%%%%
%%%%%%%%%%%%%%%%%%%%%%%%
\subsection{Proof of the upper bounds in Theorem~\ref{thm:Main}}\label{sec:MainThmLGupper}
	In this section, we prove the upper bounds of our main theorem. To this end, we show that no strategy of connecting the origin to some vertex at distance \(m^{1/d}\) works better in probability than the respective strategy in the lower bound. In order to do so, we rely on fine bounds for the probability that carefully chosen paths exist which we compute by using a first-moment method. Note that if \(1/\gamma<2\), the degree distribution has infinite second moment and already the expected number of paths of length two starting at the origin is infinite. Hence, we cannot apply direct moment bounds in the whole parameter regime. Instead, we decompose each considered path in its `powerful' vertices, the \emph{skeleton} of the path, and the remaining `weak' vertices. This concept was first introduced in~\cite{GLM2021} and gives a powerful tool to control the combinatorics of our path counts. Then, we combine this with a BK inequality for independent vertex-edge markings of~\cite{HvdHLM20} to reduce the probability of a path existing to a moment bound on the skeleton vertices that form a path themselves. This forms \ref{stepA} of our proof.

	%%%%%%%%%%%%%%%%%%%%%%%%
	%%%%%% Skeleton %%%%%%%%
	%%%%%%%%%%%%%%%%%%%%%%%%
	\subsubsection{The skeleton of a path} \label{sec:MainThmLGSkeleton}
	The concept of a skeleton of a path for weight-dependent random connection models was introduced in~\cite{GLM2021} and is based on a decomposition of a \emph{shortcut-free} path into its powerful vertices, the \emph{skeleton}, and connectors which are weaker vertices that connect the skeleton vertices with each other. Recall that a path \(P=(\x_0,\x_1,\dots,\x_n)\) is called \emph{shortcut free} if it contains no shorter subpath \(Q\subset P\) also connecting \(\x_0\) and \(\x_n\). From now on, each path is considered to be shortcut free. 
	
	The idea is now the following: To have a long path it is important to have significantly powerful vertices which are those with \emph{smallest marks} since a small mark corresponds to a large inverse weight and therefore large degree. Let \(P=(\x_0,\x_1,\dots,\x_n)\) be a path of length \(n\). We call \(\x_i=(x_i,u_i)\) for \(i\not\in\{0,n\}\) a \emph{local maximum} if \(u_i>u_{i-1}\) and \(u_i>u_{i+1}\). We now construct a new path without any local maxima, see Figure~\ref{fig:Skeleton}. 
		\begin{figure}
		\begin{center}
			\begin{tikzpicture}[scale=0.35, every node/.style={scale=0.3}]
				\node (Z) at (-2.5,8.5)[circle, draw,scale=1.5]{1};
				\draw[->] (-1,-0.5) -- (-1, 8)
					node[left,scale=2] {$u$};
				\node (A) at (0,5)[circle, fill=black, label ={}]{};	
    			\node (B) at (2.5,3)[circle, fill = black, label={}] {};
 %   			\node (C) at (5,4) [circle, draw, label={}] {};
    			\node (D) at (7.5,7)[circle, draw, label = {}] {};
    			\node (E) at (10, 6)[circle, draw, label={}] {};
    			\node (F) at (12.5,1) [circle, fill= black, label={} ] {};
    			\node (G) at (15,4.5) [circle, draw, label={}]{};
    			\node (H) at (17.5,2.5)[circle, fill=black, label={}]{};
    			\draw (A) to (B);
				\draw (B) to (D);
%				\draw (C) to (D);
				\draw (D) to (E);
				\draw (E) to (F);
				\draw (F) to (G);
				\draw (G) to (H);
			\end{tikzpicture}
			\hspace{1 cm}
			\begin{tikzpicture}[scale=0.35, every node/.style={scale=0.3}]
				\node (Z) at (-2.5,8.5)[circle, draw,scale=1.5]{2};
				\draw[->] (-1,-0.5) -- (-1, 8)
					node[left,scale=2] {$u$};
				\node (A) at (0,5)[circle, fill=black, label ={}]{};
    			\node (B) at (2.5,3)[circle, fill = black, label={}] {};
%    			\node (C) at (5,4) [circle, draw, label={}] {};
    			\node (D) at (7.5,7)[circle, draw, label = {}, dotted] {};
    			\node (E) at (10, 6)[circle, draw, label={}] {};
    			\node (F) at (12.5,1) [circle, fill= black, label={} ] {};
    			\node (G) at (15,4.5) [circle, draw, label={}]{};
    			\node (H) at (17.5,2.5)[circle, fill=black, label={}]{};
    			\draw (A) to (B);
%				\draw (B) to (C);
				\draw[dotted] (B) to (D);
				\draw[dotted] (D) to (E);
				\draw (B) to (E);
				\draw (E) to (F);
				\draw[] (F) to (G);
				\draw[] (G) to (H);
%				\draw (F) to (H);
			\end{tikzpicture}
			\begin{tikzpicture}[scale=0.35, every node/.style={scale=0.3}]
				\node (Z) at (-2.5,8.5)[circle, draw,scale=1.5]{3};
				\draw[->] (-1,-0.5) -- (-1, 8)
					node[left,scale=2] {$u$};
				\node (A) at (0,5)[circle, fill=black, label ={}]{};
    			\node (B) at (2.5,3)[circle, fill = black, label={}] {};
  %  			\node (C) at (5,4) [circle, draw, label={}] {};
    			\node (D) at (7.5,7)[label = {}] {};
    			\node (E) at (10, 6)[circle, draw, label={}, dotted] {};
    			\node (F) at (12.5,1) [circle, fill= black, label={} ] {};
    			\node (G) at (15,4.5) [circle, draw, label ={}]{};
    			\node (H) at (17.5,2.5)[circle, fill=black, label={}]{};
    			\draw (A) to (B);
				\draw (B) to (F);
				\draw[dotted] (B) to (E);
				\draw[dotted] (E) to (F);
				\draw (B) to (F);
				\draw[] (F) to (G);
				\draw[] (G) to (H);
			\end{tikzpicture}
			\hspace{1 cm}
			\begin{tikzpicture}[scale=0.35, every node/.style={scale=0.3}]
				\node (Z) at (-2.5,8.5)[circle, draw,scale=1.5]{4};
				\draw[->] (-1,-0.5) -- (-1, 8)
					node[left,scale=2] {$u$};
				\node (A) at (0,5)[circle, fill=black, label ={}]{};
    			\node (B) at (2.5,3)[circle, fill = black, label={}] {};
 %   			\node (C) at (5,4) [circle, draw, label={}, dotted] {};
    			\node (D) at (7.5,7)[label = {}] {};
    			\node (E) at (10, 6)[label={}] {};
    			\node (F) at (12.5,1) [circle, fill= black, label={} ] {};
    			\node (G) at (15,4.5) [circle, draw, dotted, label={}]{};
    			\node (H) at (17.5,2.5)[circle, fill=black, label={}]{};
    			\draw (A) to (B);
				\draw[dotted] (F) to (G);
				\draw[dotted] (G) to (H);
				\draw (B) to (F);
				\draw (F) to (H);
			\end{tikzpicture}
			\caption{A path where the mark of a vertex is denoted on the $u$-axis and the spatial location of the vertices is not shown. The vertices of the skeleton are in black. We successively remove all local maxima, starting with the largest mark vertex, and replace them by direct edges until the path, only containing the skeleton vertices, is left.}
		\label{fig:Skeleton}
		\end{center}
	\end{figure}
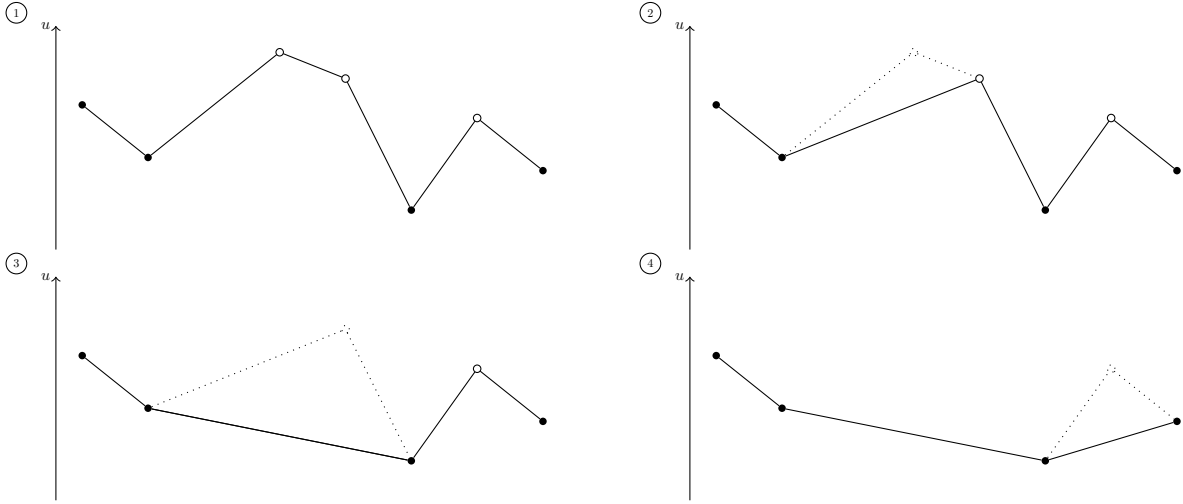	
	For this, we first take the local maximum in \(P\) with greatest vertex mark, remove it from \(P\) and connect its former neighbours by an edge. In the resulting path, we take the new local maximum of greatest vertex mark, remove it, and connect its former neighbours, repeating until there is no local maximum left in \(P\). We call the remaining vertices the \emph{skeleton} of the path. Note that start and end vertex of a path can never be a local maximum and are therefore always part of the skeleton. Further, the constructed paths of skeleton vertices is not necessarily an actual path of the graph. In particular, the skeleton vertices of a shortcut-free path cannot themselves form a path unless the vertices of the path already form a skeleton. {However, if it was a path, then, by construction, the vertices on that path have strictly decreasing vertex marks until the strongest vertex is reached and strictly increasing vertex marks afterwards.}
	
	We have decomposed the path in two sets of vertices: the skeleton vertices and the {weak vertices in between that become local maxima on some point in the procedure and} that connect two consecutive skeleton vertices. From now on, we refer to the latter as \emph{connectors}. In the following, we derive bounds for the probability that two given vertices are connected by a shortcut-free path consisting of connectors only. Afterwards, we state the BK inequality of~\cite{HvdHLM20} that can be used to decompose the whole path into its subpaths connecting two consecutive skeleton vertices to obtain bounds for the existence of paths.
	
	\paragraph{Connecting two powerful vertices.}
	In this paragraph, we consider two given vertices \(\x\) and \(\y\) and the probability that they are connected by a shortcut-free path in a weight-dependent random connection model \(\cG^{\beta,\gamma,\alpha,\delta}(\xi_{\x,\y})\) of length \(n\) consisting of connectors only. That is, the path's skeleton is given by \(\x\) and \(\y\) only. Let us denote this event by \(\{\x\xleftrightarrow[\x,\y]{n}\y \text{ in } \cG^{\beta,\gamma,\alpha,\delta}(\xi_{\x,\y})\}\). Since by construction \(\x\) and \(\y\) are connected by an edge with probability 
	\[
		\rho\big(\beta^{-1}g_{\gamma,\alpha}(u_x,u_y)|x-y|^d\big) = 
		\begin{cases}
			1, &\text{if } |x-y|^d \leq \beta g_{\gamma,\alpha}(u_x,u_y)^{-1}, \\
			\beta^\delta g_{\gamma,\alpha}(u_x,u_y)^{-\delta}|x-y|^{-d\delta}, & \text{if } |x-y|^d > \beta g_{\gamma,\alpha}(u_x,u_y)^{-1},	
		\end{cases}
	\] 
	we focus on given vertices \(\x\) and \(\y\) at distance \(|x-y|^d > \beta g_{\gamma,\alpha}(u_x,u_y)^{-1}\) to fulfil the shortcut-free property. The next lemma is key in this section and it is a combination of~\cite[Lemma~2.2 and 2.3]{GLM2021}. 
	
	\begin{lemma}[\(n\)-connection Lemma, \cite{GLM2021}] \label{lem:nConnection}
		Let \(\beta>0\), \(\gamma\in(0,1)\), \(\alpha\in[0,2-{\gamma})\), and \(\delta>1\). Let further \(\x=(x,u_x)\) and \(\y=(y,u_y)\) be two vertices satisfying the distance condition \(|x-y|^d > \beta g_{\gamma,\alpha}(u_x,u_y)^{-1}\). Assume that there is a constant \(C>0\), {depending only on \(\gamma,\alpha\) and \(d\),} such that
		\[
			\E_{\x,\y}\big[\sharp\{\z=(z,u_z)\colon u_z>u_x\vee u_y, \text{ and }\x\sim\z\sim\y \text{ in }\cG^{\beta,\gamma,\alpha,\delta}(\xi_{\x,\y})\}\big]\leq (\beta C) \rho\big(\beta^{-1}g_{\gamma,\alpha}(u_x,u_y)|x-y|^d\big).	
		\]
		Then, for all \(n\in\N\), we have
		\[
			\P_{\x,\y}\big(\x\xleftrightarrow[\x,\y]{n}\y\text{ in }\cG^{\beta,\gamma,\alpha,\delta}(\xi_{\x,\y})\big)\leq (4\beta C)^{n-1}\rho\big(\beta^{-1}g_{\gamma,\alpha}(u_x,u_y)|x-y|^d\big).
		\]
	\end{lemma}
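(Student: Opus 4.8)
The plan is to prove the bound by induction on $n$, peeling off one connector at a time and using the hypothesis on the expected number of common neighbours as the engine of the recursion. The base case $n=1$ is trivial: a path of length one consisting of connectors only is just the direct edge $\x\sim\y$, which is present with probability exactly $\rho(\beta^{-1}g_{\gamma,\alpha}(u_x,u_y)|x-y|^d)$, so the claim holds (with the empty product $(4\beta C)^0 = 1$). For the induction step, fix a shortcut-free path $(\x,\z_1,\dots,\z_{n-1},\y)$ of connectors. By the construction of the skeleton, every interior vertex $\z_i$ has mark exceeding $u_x\vee u_y$ — more precisely, since there are no local maxima left, the marks along the path are unimodal, so in particular each $\z_i$ satisfies $u_{\z_i} > u_x \vee u_y$. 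The idea is to split the path at the interior vertex $\z_j$ of \emph{smallest} mark (or, following \cite{GLM2021}, at a convenient pivot such as the first or last connector): the two subpaths $\x \xleftrightarrow{j} \z_j$ and $\z_j \xleftrightarrow{n-j} \y$ are again shortcut-free connector-paths of shorter length relative to the pair of endpoints that now includes $\z_j$, whose mark dominates. One then sums over all choices of $\z_j$, integrating the two inductive bounds against the intensity measure of the Poisson process, and the common-neighbour hypothesis bounds the resulting integral by $(\beta C)\rho(\cdots)$ times the product of the factors picked up from the two halves.

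The key technical device making the recursion close is the BK-type inequality for independent edge markings from \cite{HvdHLM20}: the events that the two subpaths exist occur \emph{disjointly} (they use disjoint sets of edge marks and, after conditioning appropriately, disjoint vertices), so the probability of the concatenation is at most the product of the probabilities of the two halves, and then a union bound over the pivot vertex turns this into an integral. Concretely, one writes
\[
\P_{\x,\y}\big(\x\xleftrightarrow[\x,\y]{n}\y\big) \le \sum_{j=1}^{n-1}\ \E_{\x,\y}\!\Big[\textstyle\sum_{\z}\mathbbm{1}\{\x\xleftrightarrow[\x,\z]{j}\z\}\circ\mathbbm{1}\{\z\xleftrightarrow[\z,\y]{n-j}\y\}\Big],
\]
where the sum over $\z$ ranges over connector vertices with $u_\z > u_x\vee u_y$; applying BK inside and then the Mecke equation to expose $\z$ as an integration variable against $\mathrm{d} z\, \mathrm{d} u_z$, one inserts the inductive hypothesis for each factor and recognizes the remaining $\z$-integral as exactly the expected-common-neighbour quantity (or a one-sided variant of it) that the hypothesis controls by $(\beta C)\rho(\beta^{-1}g_{\gamma,\alpha}(u_x,u_y)|x-y|^d)$. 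Bookkeeping the constants: each of the $n-1$ splits contributes at most a factor $4\beta C$ (the $4$ absorbing the number of ways to apportion the pivot and the combinatorial sum over $j$), and one telescopes to $(4\beta C)^{n-1}\rho(\cdots)$.

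I expect the main obstacle to be the careful handling of the \emph{shortcut-free} and \emph{connector} constraints through the recursion: when the path is split at $\z_j$, the two pieces must remain shortcut-free \emph{and} must still consist only of connectors relative to their new endpoint pairs, and one must verify that the distance condition $|{\cdot}-{\cdot}|^d > \beta g_{\gamma,\alpha}(\cdot,\cdot)^{-1}$ needed to invoke the hypothesis is either automatically inherited or can be dispensed with by monotonicity of $\rho$. The other delicate point is getting the constant exactly $4\beta C$ rather than something larger: this requires choosing the pivot so that the combinatorial factor from summing over $j\in\{1,\dots,n-1\}$ is absorbed geometrically (e.g. by always splitting off the single connector adjacent to the lower-mark endpoint, which makes one of the two pieces trivial and replaces the sum over $j$ by a single term), which is precisely the trick underlying \cite[Lemmas 2.2 and 2.3]{GLM2021} that we are quoting. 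Since the statement is attributed directly to \cite{GLM2021}, the proof here should really just be a matter of transcribing that argument into the present notation, checking that the interpolation kernel $g_{\gamma,\alpha}$ and profile $\rho_\delta$ satisfy the structural properties (monotonicity, the product form) that the original proof used.
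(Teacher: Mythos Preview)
Your plan has a genuine combinatorial gap, and it diverges from the argument the paper (following \cite{GLM2021}) actually uses. First, a minor but telling slip: your claim that ``since there are no local maxima left, the marks along the path are unimodal'' is wrong. The condition that the skeleton of the path equals $\{\x,\y\}$ is equivalent to \emph{every interior mark exceeding $u_x\vee u_y$}, but the interior mark sequence can otherwise be arbitrary and have many local maxima; it is the \emph{skeleton}, not the original path, that has no local maxima. Second, and more seriously, your recursion does not close with the stated constant. Splitting at the minimum-mark connector $\z_j$ (the only pivot choice that preserves the two-vertex-skeleton property for both halves) forces you to sum over the position $j\in\{1,\dots,n-1\}$, and after applying the induction hypothesis to each half and integrating over $\z_j$ via the common-neighbour assumption you obtain $(n-1)(4\beta C)^{n-2}\beta C\,\rho(\cdots)$, which is too large for $n\ge 5$. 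Your proposed fix --- peeling off a single connector adjacent to an endpoint --- fails because that connector need not have the smallest interior mark, so the remaining subpath need not have two-vertex skeleton and the induction hypothesis does not apply to it.

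The actual argument is structurally different: one encodes the $n-1$ interior connectors as a \emph{labelled binary tree}, where the tree shape records the order in which local maxima are removed and the labels are the Poisson vertices. For a \emph{fixed} tree shape, one peels off the leaf carrying the \emph{largest} mark, which is necessarily a local maximum between two smaller-mark neighbours in the associated path; the hypothesis bounds the expected number of labels for that leaf by $\beta C$ times the edge probability between its neighbours, and iterating yields $(\beta C)^{n-1}\rho(\cdots)$ for each fixed shape. The factor $4^{n-1}$ then comes from a union bound over the at most $4^{n-1}$ binary tree shapes on $n-1$ vertices (a Catalan-number bound), not from any sum over pivot positions. No BK inequality is needed for this lemma; BK enters only later, in~\eqref{eq:BK}, to decompose paths over their skeleton vertices.
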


	The idea behind the lemma is the following. One can represent the inner path consisting of all vertices excluding \(\x\) and \(\y\) as a binary tree which is labelled by the Poisson points of the graph such that each child has a greater vertex mark than its parent. Here, a binary tree is a tree where each vertex has either no child, a left child, a right child, or a left and a right child. The underlying  binary tree encodes important structural information of the path. Namely, it encodes the order of the local maxima where the precise vertex marks order is then given by the labelling. The proof works by induction. If \(n=1\) there is nothing to show. For \(n=2\) the binary tree consists only of a root vertex which is the intermediate connector and the claim follows from the assumptions. Now, for any \(n\geq 3\), fix a binary tree of \(n-1\) vertices (recall that start and end vertex are not represented in the tree) and label it with Poisson points such that each child has mark larger than its parent and the distance condition for shortcut-free paths is fulfilled. This then represents a paths of length \(n\). Pick the leaf with largest vertex mark in the tree. The vertex represented by this leaf is by necessity a connector of two vertices with smaller marks in the path. Therefore, we can apply the assumption and bound the expected number of possible labels for this leaf by \(\beta C\) times the probability that there is an edge between the vertices the leaf connects in the path. Since this yields a new path on \(n-1\) vertices represented by the tree of \(n-2\) vertices where the leaf was removed, the induction hypothesis applies and we infer that the expected number of possible labelings is bounded by \((\beta C)^{n-1}\rho\big(\beta^{-1}g_{\gamma,\alpha}(u_x,u_y)|x-y|^d\big)\). Now, the claim follows by applying a union bound over all unlabelled trees on \(n-1\) vertices and the fact that there are at most \(4^{n-1}\) such trees. For the details of the proof and particularly the tree representation, we refer the reader to~\cite{GLM2021}.
	
	{Key to an application of Lemma~\ref{lem:nConnection} is the assumption that the expected number of weak vertices connecting two given stronger vertices at a large distance scales no better than the probability of a direct connection via a single edge. If this is the case, the lemma tells us that the probability to connect the two vertices using many connectors scales even worse provided \(\beta\) is chosen small enough. Unfortunately, the soft Boolean model does not fulfil the assumption. On an intuitive level, this is easy to see. Indeed, for the connection of two given vertices \(\x\) and \(\y\) only the mark (i.e., ball volume) of the stronger vertex, say \(\y\), matters. Whether the vertex \(\x\) is rather weak, e.g., \(u_x^{-\gamma}=2\), or almost as strong as \(\y\), e.g., \(u_x^{-\gamma}=u_y^{-\gamma}/2\), has no effect on the connection probability, cf.\ connection rule~\eqref{eq:defSoftBool}. However, in the latter case, the expected neighbourhood of \(\x\) is much larger than in the first case, so that the expected intersection of both neighbourhoods becomes much larger as well so that two strong vertices may actually benefit from a connector strategy.}  
To tackle this issue and still make use of Lemma~\ref{lem:nConnection} and the skeleton strategy, we stochastically dominate the soft Boolean model by a graph that matches the assumption. To this end, consider the weight-dependent random connection model \(\cG^{\beta,\gamma,\gamma/\delta,\delta}(\xi)\) defined through its kernel
	\[
		g_{\gamma,\gamma/\delta}(s,t)=(s\wedge t)^\gamma (s\vee t)^{\gamma/\delta},
	\]
	which we also refer to as \emph{two-connection kernel}. 
	To lighten notation, we write \(\widehat{\cG}^{\beta}=\cG^{\beta,\gamma,\gamma/\delta,\delta}(\xi)\) in the following. The following lemma shows that \(\widehat{\cG}^\beta\) dominates \(\cG^\beta\) and, indeed, has the required \(n\)-connection property of Lemma~\ref{lem:nConnection} whenever the soft Boolean model has a subcritical phase.
	
	\begin{lemma}\label{lem:MainThmLGUpperDomModel}
		Let \(\beta>0\), \(1/\delta<1/\gamma-1<\infty\), and \(\delta>1\) and consider the soft Boolean model \(\cG^\beta=\cG^{\beta,\gamma,0,\delta}(\xi)\) and the graph \(\widehat{\cG}^\beta=\cG^{\beta,\gamma,\gamma/\delta,\delta}(\xi)\). Then
		\begin{enumerate}[(i)]
			\item the two edge sets satisfy \(E(\cG^{\beta})\subset E(\widehat{\cG}^\beta)\) almost surely and
			\item for two given vertices \(\x=(x,u_x)\) and \(\y=(y,u_y)\) at distance \(|x-y|^d > \beta g_{\gamma,\gamma/\delta}(u_x,u_y)^{-1}\) and all \(n\in\N\), we further have
			\[
				\P_{\x,\y}\big(\x\xleftrightarrow[\x,\y]{n}\y\text{ in }\widehat{\cG}^{\beta}_{\x,\y}\big)\leq (\beta \widehat{C})^{n-1}\rho\big(\beta^{-1}g_{\gamma,\gamma/\delta}(u_x,u_y)|x-y|^d\big),
			\]
		where \(\widehat{C}=\tfrac{2^{d\delta+3}\omega_d \delta^2}{(\delta-1)(\delta-\gamma(\delta+1))}\).
		\end{enumerate}
	\end{lemma}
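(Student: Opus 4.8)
\textbf{Part \textit{(i)}.} This is immediate from monotonicity. For all $s,t\in(0,1)$ we have
\[
g_{\gamma,\gamma/\delta}(s,t)=(s\wedge t)^\gamma(s\vee t)^{\gamma/\delta}\le (s\wedge t)^\gamma=g_{\gamma,0}(s,t),
\]
since $(s\vee t)^{\gamma/\delta}\le 1$. As $\rho$ is non-increasing, the edge probability $\rho\big(\beta^{-1}g_{\gamma,\alpha}(U_i,U_j)|X_i-X_j|^d\big)$ in~\eqref{eq:edgeSet} is no smaller for $\alpha=\gamma/\delta$ than for $\alpha=0$, and since both graphs are built from the \emph{same} edge marks $(V_{i,j})$, every edge present in $\cG^\beta$ is present in $\widehat{\cG}^\beta$. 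One should also check that $\alpha=\gamma/\delta$ is an admissible parameter, i.e.\ $\gamma/\delta\in[0,2-\gamma)$; this holds since $\gamma(\delta+1)/\delta<1<2$ under the standing assumption $\gamma<\delta/(\delta+1)$.

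\textbf{Part \textit{(ii)}: reduction to the $n$-connection Lemma.} The plan is to verify the hypothesis of Lemma~\ref{lem:nConnection} for $\widehat{\cG}^\beta=\cG^{\beta,\gamma,\gamma/\delta,\delta}$ with the constant $C=\widehat{C}/4$; the claimed bound then follows verbatim from that lemma. Fix $\x=(x,u_x)$, $\y=(y,u_y)$ satisfying $|x-y|^d>\beta g_{\gamma,\gamma/\delta}(u_x,u_y)^{-1}$, write $L:=|x-y|$ and $g:=g_{\gamma,\gamma/\delta}(u_x,u_y)$, and assume without loss of generality $u_x\le u_y$ (both the event and the distance condition are symmetric). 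By the Mecke equation and independence of edge marks, the expected number of admissible connectors equals
\[
\int_{u_x\vee u_y}^{1}\d u_z\int_{\R^d}\d z\;\rho\big(\beta^{-1}g_{\gamma,\gamma/\delta}(u_x,u_z)|x-z|^d\big)\,\rho\big(\beta^{-1}g_{\gamma,\gamma/\delta}(u_z,u_y)|z-y|^d\big),
\]
and since $u_z>u_x\vee u_y=u_y$ the kernels collapse to $g_{\gamma,\gamma/\delta}(u_x,u_z)=u_x^\gamma u_z^{\gamma/\delta}$ and $g_{\gamma,\gamma/\delta}(u_z,u_y)=u_y^\gamma u_z^{\gamma/\delta}$. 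Thus, setting $a:=\beta^{-1}u_x^\gamma u_z^{\gamma/\delta}$ and $b:=\beta^{-1}u_y^\gamma u_z^{\gamma/\delta}$, the inner integral is $\int_{\R^d}\rho(a|x-z|^d)\rho(b|z-y|^d)\,\d z$.

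\textbf{The spatial bound and the $u_z$-integration.} For the inner integral I would use the elementary convolution estimate obtained by splitting $\R^d$ according to whether $|z-x|\le|z-y|$ or not: on the first set the triangle inequality forces $|z-y|\ge L/2$, on its complement $|z-x|\ge L/2$, so bounding the far factor by its value at $L/2$ and integrating the near factor over all of $\R^d$ (using $\int_{\R^d}\rho(a|w|^d)\,\d w=\tfrac{\omega_d\delta}{(\delta-1)a}$) gives
\[
\int_{\R^d}\rho(a|x-z|^d)\rho(b|z-y|^d)\,\d z\le \rho\big(b(L/2)^d\big)\tfrac{\omega_d\delta}{(\delta-1)a}+\rho\big(a(L/2)^d\big)\tfrac{\omega_d\delta}{(\delta-1)b}.
\]
Then I would bound $\rho(\cdot)\le(\cdot)^{-\delta}$ in the two remaining $\rho$-factors, invoke the distance condition $L^d>\beta g^{-1}$ to write the target quantity exactly as $\rho(\beta^{-1}gL^d)=\beta^\delta g^{-\delta}L^{-d\delta}=\beta^\delta u_x^{-\gamma\delta}u_y^{-\gamma}L^{-d\delta}$, and divide through. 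Both terms reduce, after discarding factors $(u_x/u_y)^{\gamma(\delta-1)}\le 1$ and $u_y^{\text{(nonneg.\ power)}}\le 1$, to a constant times $u_z^{-\gamma(\delta+1)/\delta}\,\rho(\beta^{-1}gL^d)$; crucially, $\int_{u_y}^{1}u_z^{-\gamma(\delta+1)/\delta}\,\d u_z\le \tfrac{\delta}{\delta-\gamma(\delta+1)}$ precisely because $\gamma<\delta/(\delta+1)$. Collecting the constants $\tfrac{\omega_d\delta}{\delta-1}$, $2^{d\delta}$, $\tfrac{\delta}{\delta-\gamma(\delta+1)}$ and the factor $2$ from the two terms yields
\[
\E_{\x,\y}\big[\sharp\{\z\colon u_z>u_x\vee u_y,\ \x\sim\z\sim\y\}\big]\le \tfrac{2^{d\delta+1}\omega_d\delta^2}{(\delta-1)(\delta-\gamma(\delta+1))}\,\beta\,\rho\big(\beta^{-1}g|x-y|^d\big)=\tfrac{\widehat{C}}{4}\,\beta\,\rho\big(\beta^{-1}g|x-y|^d\big),
\]
and Lemma~\ref{lem:nConnection} with $C=\widehat{C}/4$ gives $\P_{\x,\y}(\x\xleftrightarrow[\x,\y]{n}\y\text{ in }\widehat{\cG}^\beta_{\x,\y})\le(4\beta C)^{n-1}\rho(\beta^{-1}g|x-y|^d)=(\beta\widehat{C})^{n-1}\rho(\beta^{-1}g|x-y|^d)$.

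\textbf{Where the difficulty lies.} The routine part is the bookkeeping of constants sketched above; the conceptual point is the choice $\alpha=\gamma/\delta$ of the dominating kernel. The soft Boolean model itself fails the hypothesis of Lemma~\ref{lem:nConnection}: there the connection probability sees only the smaller of the two marks, so the number of connectors between $\x$ and $\y$ carries an extra factor $u_y^{-\gamma}$ relative to the direct edge probability, which is unbounded as $u_y\to0$. Passing from $\alpha=0$ to $\alpha=\gamma/\delta$ inflates the edge probability by exactly this compensating $u_y^{-\gamma}$ (in the shortcut-free regime), at the price of an additional factor $u_z^{-\gamma/\delta}$ per edge incident to a connector; the resulting $u_z$-integral over $(u_y,1)$ is finite if and only if $\gamma(\delta+1)/\delta<1$, which is why the statement is confined to $\gamma<\delta/(\delta+1)$ and why the constant $\widehat{C}$ blows up as $\gamma\uparrow\delta/(\delta+1)$.
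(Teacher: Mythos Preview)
Your proposal is correct and follows essentially the same route as the paper: the domination in Part~\textit{(i)} via $g_{\gamma,\gamma/\delta}\le g_{\gamma,0}$ and monotonicity of $\rho$, and in Part~\textit{(ii)} the Mecke formula followed by the half-distance split, the change of variables giving $\int\rho(a|w|^d)\,\d w=\tfrac{\omega_d\delta}{(\delta-1)a}$, the bound $\int_{u_x\vee u_y}^1 u_z^{-\gamma(\delta+1)/\delta}\d u_z\le\tfrac{\delta}{\delta-\gamma(\delta+1)}$, and finally the appeal to Lemma~\ref{lem:nConnection} with $C=\widehat{C}/4$. The only cosmetic difference is that the paper takes $u_x>u_y$ while you take $u_x\le u_y$; your concluding paragraph explaining \emph{why} the exponent $\alpha=\gamma/\delta$ is the right compensation is a nice addition not spelled out in the paper.
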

	\begin{proof}
		Since \(\gamma/\delta>0\) and therefore \(g_{\gamma,0}(s,t)>g_{\gamma,\gamma/\delta}(s,t)\) and \(\rho\) is non increasing, each edge that is present in \(\cG^\beta\) is also present in \(\widehat{\cG}^\beta\) by construction~\eqref{eq:edgeSet}, proving (i). 
		
		To prove (ii), we assume without loss of generality \(u_x>u_y\), and calculate using Mecke's equation~\cite{LastPenrose2017}
		\begin{equation*}
			\begin{aligned}
			\E_{\x,\y} & \big[\sharp\{\z=(z,u_z)\in\cX\colon u_z>u_x\vee u_y, \text{ and }\x\sim\z\sim\y \text{ in }\widehat{\cG}^{\beta}_{\x,\y}\}\big]	\\ 
			&=\int\limits_{\R^d} \d z \int\limits_{u_x}^1 \d u_z \ \rho\big(\beta^{-1}u_y^\gamma u_z^{\nicefrac{\gamma}{\delta}}|y-z|^d\big)\rho\big(\beta^{-1} u_x^\gamma u_z^{\nicefrac{\gamma}{\delta}} |x-z|^d\big).
			\end{aligned}
		\end{equation*}
		Now, either \(|y-z|\geq \tfrac{1}{2}|x-y|\) or \(|x-z|\geq\tfrac{1}{2}|x-y|\). Splitting \(\R^d\) according to those cases yields
		\begin{equation*}
			\begin{aligned}
				 \int\limits_{\R^d} \d z \int\limits_{u_x}^1\d u_z & \ \rho(\beta^{-1}u_y^{\gamma}u_z^{\nicefrac{\gamma}{\delta}}|y-z|^d)\rho(\beta^{-1}u_x^{\gamma}u_z^{\nicefrac{\gamma}{\delta}}|x-z|^d) \\ 
				 & \leq  \int\limits_{u_x}^1 \d u_z \ \rho\big((2^d\beta)^{-1}u_y^{\gamma}u_z^{\nicefrac{\gamma}{\delta}}|y-x|^d\big)\int\limits_{\R^d} \ d z \ \rho(\beta^{-1}u_x^{\gamma}{u_z^{\nicefrac{\gamma}{\delta}}}|z|^d) 
                  \\
                  & \qquad + \int\limits_{u_x}^1\d u_z \ \rho((2^d\beta)^{-1}u_x^{\gamma}u_z^{\nicefrac{\gamma}{\delta}}|y-x|^d)\int\limits_{\R^d} \d z \ \rho(\beta^{-1}u_y^{\gamma}u_z^{\nicefrac{\gamma}{\delta}}|z|^d).
			\end{aligned}
		\end{equation*}
		For the first integral, we use the change of variables \(w=(\beta^{-1}u_x^\gamma u_z^{\gamma/\delta})^{1/d}z\) and the distance condition to deduce
		\begin{equation*}
			\begin{aligned}
				\int\limits_{u_x}^1  \d u_z \ \rho\big(&(2^d\beta)^{-1}u_y^{\gamma}u_z^{\nicefrac{\gamma}{\delta}}|y-x|^d\big)\int\limits_{\R^d} \ d z \ \rho(\beta^{-1}u_x^{\gamma}{u_z^{\nicefrac{\gamma}{\delta}}}|z|^d) \\
				& = 2^{d\delta} \beta^{1+\delta}u_x^{-\gamma}|x-y|^{-d\delta}u_y^{-\gamma\delta}\Big(\int\limits_{u_x}^1 \d u_z \  u_z^{-\gamma-\nicefrac{\gamma}{\delta}}\Big)\Big(\int\limits_{\R^d} \d w \ \rho(|w|^d)\Big) \\ 
				& \leq \beta \tfrac{2^{d\delta}\omega_d \delta^2}{(\delta-1)(\delta-\gamma(\delta+1))}\rho\big(\beta^{-1}g_{\gamma,\gamma/\delta}(u_x,u_y)|x-y|^d\big),
			\end{aligned}
		\end{equation*}
		where we have used \(1-\gamma-\gamma/\delta>0\), and \(\int \rho(|w|^d)\d w=\omega_d \delta / (\delta-1)\), together with the distance condition in the last step. A similar calculation for the second integral yields the same bound and summing both terms yields
		\[
			\E_{\x,\y}\big[\sharp\{\z=(z,u_z)\in\cX\colon u_z>u_x\vee u_y, \text{ and }\x\sim\z\sim\y \text{ in }\widehat{\cG}^{\beta}_{\x,\y}\}\big]\leq \tfrac{\beta\widehat{C}}{4} \rho\big(\beta^{-1}g_{\gamma,\gamma/\delta}(u_x,u_y)|x-y|^d\big).
		\]
		Hence, we can apply Lemma~\ref{lem:nConnection}, finishing the proof.
	\end{proof}
	
	\paragraph{BK inequality.}
	The last section was devoted to showing how a path whose skeleton consists of start and end vertex only can be reduced to a single edge in probability. Since we consider more general paths in the following, we need to decompose the whole path in its subpaths between two consecutive skeleton vertices to make use of the above results. To this end, we use a version of the BK inequality~\cite{BK85} for an independent vertex-edge marking of a Poisson point processes as in~\cite[Theorem 2.1]{HvdHLM20} that generalises the BK inequality for the classical Boolean model in~\cite{GuptaRao1999}. The application to our setting is described in detail in~\cite[p.~14]{HvdHLM20}. {In a nutshell, the BK inequality states that two \emph{increasing} events, i.e.\ events that become more probable if additional vertices are added to the graph, are less likely to occur on two disjoint subsets of the original graph than separately on two independent copies of the graph.}
	
	Let \(\x_0,\x_1,\dots,\x_k\) be given vertices with a vertex mark structure that  form a skeleton. That is, their marks are decreasing until they reach the vertex with minimum mark and only increasing afterwards. Let us denote by \(\{\x_0\xleftrightarrow[\x_0,\x_1,\dots,\x_k]{n}\x_k \text{ in } \widehat{\cG}^\beta_{\x_0,\x_1,\dots,\x_k}\}\) the event that \(\x_0\) and \(\x_k\) are connected by a (shortcut-free) path of length \(n\) and skeleton \(\x_0,\x_1,\dots,\x_k\) in \(\widehat{\cG}^\beta_{\x_o,\dots,\x_n}\), which is consistent with the previously used notation. 
 
    We start with the case \(k=2\) where the skeleton only consists of the three vertices \(\x_0,\x_1,\x_2\). We consider for \(n_1+n_2=n\) the two events \(\{\x_{i-1}\xleftrightarrow[\x_{i-1},\x_i]{n_i}\x_i \text{ in } \widehat{\cG}^\beta_{\x_{i-1},\x_i}\}\), for \(i=1,2\). We are interested in the event that there exists a path from \(\x_0\) to \(\x_2\) with intermediate skeleton vertex \(\x_1\) such that there are \(n_1\) connectors used between \(\x_0\) and \(\x_1\) and \(n_2\) connectors used between \(\x_1\) and \(\x_2\). Since all our paths are self avoiding and shortcut free, this event is the same as the \emph{disjoint occurrence} of the two events \(\x_{0}\xleftrightarrow[\x_{0},\x_1]{n_1}\x_1\), and \(\x_{1}\xleftrightarrow[\x_{1},\x_2]{n_2}\x_2\). Here, disjoint occurrence means that the two paths share no element of \(\xi\). That is, no Poisson vertex is used twice. Note here that \(\x_1\) is a given vertex and thus not a random element of \(\cX\). We denote the disjoint occurrence by \(\circ\). Hence, the event of interest is given by 
	\[
		\big\{\x_{0}\xleftrightarrow[\x_{0},\x_1]{n_1}\x_1 \text{ in }\widehat{\cG}^\beta_{\x_0,\x_1}\big\} \circ \big\{\x_{1}\xleftrightarrow[\x_{1},\x_2]{n_2}\x_2\text{ in }\widehat{\cG}^\beta_{\x_1,\x_2}\big\}.
	\]
	Further, the two events are \emph{increasing} in the following sense: Given any realisation \(\omega\) of, say, \(\xi_{\x_0,\x_1}\) such that the event \(\{\x_{0}\xleftrightarrow[\x_{1},\x_1]{n_1}\x_1 \text{ in }\widehat{\cG}^\beta_{\x_1,\x_2}\}\) occurs on \(\omega\), it also occurs on all realisations \(\omega'\) with \(\omega\subset\omega'\). That is, if there is such a path for some realisation, it will also be present if additional vertices are added to the graph. It is important to note that this notion of increasing refers to set inclusion only, i.e., we only may add additional vertices with their incident edges but we never add additional edges between already existing vertices as this may shorten the path of length \(n_1\) due to the shortcut-free property. The application of the BK inequality~\cite[Theorem 2.1]{HvdHLM20} as performed on~\cite[p.~14]{HvdHLM20} then yields
	\begin{equation*}
		\begin{aligned}
			\P_{\x_0,\x_1,\x_2} & \Big(\big\{\x_{0}\xleftrightarrow[\x_{0},\x_1]{n_1}\x_1 \text{ in }\widehat{\cG}^\beta_{\x_0,\x_1}\big\} \circ \big\{\x_{1}\xleftrightarrow[\x_{1},\x_2]{n_2}\x_2\text{ in }\widehat{\cG}^\beta_{\x_1,\x_2}\big\}\Big) \\
			&\leq \P_{\x_0,\x_1}\big(\x_{0}\xleftrightarrow[\x_{0},\x_1]{n_1}\x_1\text{ in }\widehat{\cG}^\beta_{\x_0,\x_1}\big)\P_{\x_1,\x_2}\big(\x_{1}\xleftrightarrow[\x_{1},\x_2]{n_2}\x_2\text{ in }\widehat{\cG}^\beta_{\x_1,\x_2}\big).
		\end{aligned}
	\end{equation*}
	Inductively, this applies to all \(k\geq 2\) and \(n_1+\dots+n_k=n\) and therefore
	\begin{equation}\label{eq:BK}
		\P_{\x_0,\dots,\x_k}\big(\x_0\xleftrightarrow[\x_0,\dots,\x_k]{n}\x_k \text{ in } \widehat{\cG}^\beta_{\x_0,\dots,\x_k}\big) \leq \sum_{\substack{n_1,\dots,n_k\in\N \\ n_1+\dots+n_k=n}} \prod_{i=1}^k \P_{\x_{i-1},\x_i}\big(\x_{i-1}\xleftrightarrow[\x_{i-1},\x_i]{n_i}\x_i\text{ in }\widehat{\cG}^\beta_{\x_{i-1},\x_i}\big).
	\end{equation}
	Observe that the above holds  for all versions of the weight-dependent random connection model. However, we only apply it to \(\widehat{\cG}^\beta\) in the following and we have hence restricted ourselves to this version to lighten notation. 
	
	\paragraph{Bounds on the probability of paths in the soft Boolean model.}
	In this paragraph, we combine the results of the two previous paragraphs to derive bounds on the probability that certain paths exist. Recall the notation of \(\C_\beta=\C_{\beta,\gamma,0,\alpha}(\0)\) for the component of the origin in the soft Boolean model and the notation \(\M_\beta=\M_{\beta,\gamma,0,\delta}(\0)\) for its Euclidean diameter (to the power \(d\)). Now, the event \(\M_\beta>m\) is equivalent to the existence of a path starting in \(\o\) where all vertices are located in \(B(m^{1/d})\) except for the last vertex that is located outside the ball. We prepare bounds for that type of events in the following.
	
	Let us denote by \(\{\x\xleftrightarrow[D\times J]{}\y \text{ in } \cG^{\beta}_{\x,\y}\}\) for two given vertices \(\x\) and \(\y\), a domain \(D\subset{\R}^d\), and a measurable set \(J\subset(0,1)\) the event that \(\x\) and \(\y\) are connected by a shortcut-free path in \(\cG^\beta\) where all skeleton vertices but \(\x\) and \(\y\) are elements of \(D\times J\). If \(J=(0,1)\), we simply write \(\{\x\xleftrightarrow[D]{}\y \text{ in } \cG^{\beta}_{\x,\y}\}\). We further denote by \(\{\x\xleftrightarrow[D\times J]{n}\y \text{ in } \widehat{\cG}^{\beta}_{\x,\y}\}\) the event that \(\x\) and \(\y\) are connected by a shortcut-free path of \emph{length \(n\)} in \(\widehat{\cG}^\beta\) where the skeleton has the same restriction as above. Note that now only the skeleton vertices are restricted to certain locations and marks. However, the results of the previous paragraphs tell us that this is what matters when it comes to bound the probability of a path. Note further that this notation is consistent with the one used above to describe paths with a \emph{given} skeleton. Indeed, if the whole skeleton is given, the location domain \(D\) and the vertex marks \(J\) reduce to the given points. We remark that it will always be clear from the context whether we refer to a domain for the vertex locations or to a concrete skeleton. Making use of the domination derived in Lemma~\ref{lem:MainThmLGUpperDomModel} Part~(i), we directly obtain
	\begin{equation}\label{eq:pathsBoundI}
		\P_{\x,\y} \big(\x\xleftrightarrow[D\times J]{}\y \text{ in } \cG^{\beta}_{\x,\y}\big) 
			 \leq \sum_{n\in\N}\P_{\x,\y}\big(\x\xleftrightarrow[D\times J]{n}\y \text{ in } \widehat{\cG}^{\beta}_{\x,\y}\big). 
	\end{equation} 
	Using Mecke's equation~\cite{LastPenrose2017}, the BK-inequality~\eqref{eq:BK}, the \(n\)-connection property of Lemma~\ref{lem:MainThmLGUpperDomModel} Part~(ii), and writing \(\x=\x_0, \y=\x_k\) the probability on the right-hand side can further be bounded by 
	\begin{equation}\label{eq:pathsBoundII}
		\begin{aligned}
			& \P_{\x,\y}  \big(\x\xleftrightarrow[D\times J]{n}\y \text{ in } \widehat{\cG}^{\beta}_{\x,\y}\big) \\
			& \leq \sum_{k=1}^n\E_{\x_0,\x_k}\Bigg[\sum_{\substack{\x_1,\dots,\x_{k-1}\in\cX \\ x_j\in D, u_j\in J, \, \forall j}}^{\neq}\1_{\{\x_0,\x_1,\dots,\x_k \text{ form skeleton}\}} \P_{\x_0,\x_1,\dots,\x_k}\big(\x\xleftrightarrow[\x_0,\x_1,\dots,\x_k]{n}\x_k \text{ in } \widehat{\cG}^{\beta}_{\x_0,\dots,\x_k}\big)\Bigg] \\
			& \leq \sum_{k=1}^n \sum_{\substack{n_1,\dots,n_k\in\N \\ n_1+\dots+n_k=n}}\int\limits_{\substack{(D\times J)^{k-1} \\ \text{skeleton structure}}} \hspace{-0.5 cm} \d \x_1\cdots\d \x_{k-1}\prod_{i=1}^k \P_{\x_{i-1},\x_i}\big(\x_{i-1}\xleftrightarrow[\x_{i-1},\x_i]{n_i}\x_i\text{ in }\widehat{\cG}^\beta_{\x_{i-1},\x_i}\big) \\
			& \leq \sum_{k=1}^n \binom{n}{k} (\beta \widehat{C})^{n-k}\int\limits_{\substack{(D\times J)^{k-1} \\ \text{skeleton structure}}} \hspace{-0.5 cm} \d \x_1\cdots\d \x_{k-1}\prod_{i=1}^k \rho\big(\beta^{-1}g_{\gamma,\gamma/\delta}(u_{i-1},u_i)|x_{i-1}-x_i|^d\big),		
		\end{aligned}
	\end{equation}
	where we have additionally used \(\sharp\{n_1,\dots,n_k\in\N\colon n_1+\dots+n_k= n\}=\binom{n-1}{k-1}\leq \binom{n}{k}\) in the last step. Let us further recall that \(\widehat{C}=\tfrac{2^{d\delta+3}\omega_d \delta^2}{(\delta-1)(\delta-\gamma(\delta+1))}\) is the constant given in Lemma~\ref{lem:MainThmLGUpperDomModel}. In conclusion, an upper bound for the left-hand side in~\eqref{eq:pathsBoundI} is given by summing the right-hand side of \eqref{eq:pathsBoundII} over all \(n\in\N\). 
	
	{Let us elaborate on the typical choice of \(D\times J\) in~\eqref{eq:pathsBoundI}. By nature of our question, we are interested in paths that end outside of \(B(m^{1/d})\) but all vertices except the last one are located inside the ball, so that typically \(D=B(m^{1/d})\). Furthermore, we will often require that vertices on the path are not too powerful to have good control on the expectation bound. More precisely, for the proof of Theorem~\ref{thm:Main} Part~(iii), we typically consider paths where no vertex but the last vertex has associated ball volume larger than \(s_m:=m^{(\delta-1)/\delta}\). Since the volume is parametrised as \(u^{-\gamma}\), we choose \(J=(s_m^{-1/\gamma},1)\) and consider paths of the form \(\x\xleftrightarrow[B(m^{1/d})\times (s_m^{-1/\gamma},1)]{n}\y\). That is, \(\x\) and \(\y\) are connected by a paths of length \(n\), and all intermediate skeleton vertices are located within \(B(m^{1/d})\) and have associated ball volume no larger than \(s_m\).}

	%%%%%%%%%%%%%%%%%%%%%%%%%%%%%%%%%%%
	%%% Proof of upper bound (i) %%%%%%
	%%%%%%%%%%%%%%%%%%%%%%%%%%%%%%%%%%
	\subsubsection{Proof of the upper bound in Part~(i)}\label{sec:upperBound(i)}
	We prove the upper bound of Theorem~\ref{thm:Main} Part~(i) in this section. That is, we assume {\(\delta<1/\gamma-1\)}. Since the influence of the radii is rather weak compared to the long-range effects in this setting, we can apply the previously derived bounds directly to obtain the desired result which is summarised in the following proposition. To quantify the bound on \(\beta\) for which our results hold, we recall from Theorem~\ref{thm:Main}
	\[
		\beta_0= \tfrac{1}{2^{d\delta+3}+1}\cdot \tfrac{\delta-1}{\omega_d \delta}\big(1-\gamma\tfrac{\delta+1}{\delta}\big)=\tfrac{(\delta-1)(\delta-\gamma(\delta+1))}{(2^{d\delta+3}+1)\omega_d \delta^2}.
	\]
As we consider various paths starting in the origin \((o,u_o)\) in the following, we will also often refer to the origin as \(\x_0=(0,u_0)\) in order to have a clean numeration of the vertices on the path.
	
	\begin{prop}\label{prop:mainThm(i)upperBound}
		Let \(\beta<\beta_0\), \(\delta>1\) and {\(\delta<1/\gamma-1\)}. Consider the soft Boolean model \(\cG^\beta_\o=\cG^{\beta,\gamma,0,\delta}_\o\) and its Euclidean diameter \(\M_\beta=\M_{\beta,\gamma,0,\delta}\). Then, for all \(m>1\), we have
		\[
			\P_o\big(\M_\beta>m\big)\leq C_2 m^{1-\delta},
		\]
		where \(C_2\) is given below in~\eqref{eq:MainThmSGConstant}.
	\end{prop}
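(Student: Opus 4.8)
The plan is to run a first-moment argument over the skeleton decomposition of Section~\ref{sec:MainThmLGSkeleton}. Since $\{\M_\beta>m\}$ is exactly the event that the origin is joined, inside $\B(m^{1/d})$, to some further vertex lying outside that ball, tracking the first vertex at which a shortcut-free path leaves $\B(m^{1/d})$ and applying Mecke's equation gives
\[
	\P_o(\M_\beta>m)\leq\int_0^1\d u_0\int_0^1\d u_y\int_{|y|^d>m}\d y\ \P_{\0,\y}\big(\0\xleftrightarrow[\B(m^{1/d})]{}\y\text{ in }\cG^\beta_{\0,\y}\big),
\]
with $\x_0=\0=(o,u_0)$ and $\y=(y,u_y)$. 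Then I would invoke~\eqref{eq:pathsBoundI} together with~\eqref{eq:pathsBoundII} to bound the inner probability by
\[
	\sum_{n\in\N}\sum_{k=1}^n\binom{n}{k}(\beta\widehat C)^{n-k}\int_{\substack{(\B(m^{1/d})\times(0,1))^{k-1}\\\text{skeleton structure}}}\bigotimes_{j=1}^{k-1}\d x_j\prod_{i=1}^k\rho\big(\beta^{-1}g_{\gamma,\gamma/\delta}(u_{i-1},u_i)|x_{i-1}-x_i|^d\big),
\]
so that everything reduces to estimating this spatial integral for a fixed skeleton size $k$ and then summing.

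The heart of the matter is the spatial integral. Here, since $\gamma<1/(\delta+1)$ is small, I would enlarge each location domain from $\B(m^{1/d})$ to $\R^d$ (this is the step that would be \emph{wrong} for Part~\textit{(iii)}, where the restriction to the ball is what caps the contribution of a powerful intermediate vertex) and then integrate out $x_1,\dots,x_{k-1}$ one vertex at a time, exploiting that each $\rho$-kernel keeps consecutive vertices close so that no integration produces a factor growing with $m$. Each step uses $\int_{\R^d}\rho(\beta^{-1}a|z|^d)\,\d z=\tfrac{\omega_d\delta}{\delta-1}\beta a^{-1}$ and a convolution estimate of the type obtained in the proof of Lemma~\ref{lem:MainThmLGUpperDomModel} --- split $\R^d$ according to which of the two adjacent edge lengths dominates, and use $\rho(c^{-1}t)\leq c^\delta\rho(t)$ for $c\geq1$. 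Iterating, the chain collapses into a bounded number of terms, each of the form $(\widehat c\,\beta)^{k-1}\big(\prod_{i\neq i^\ast}g_{\gamma,\gamma/\delta}(u_{i-1},u_i)^{-1}\big)\rho\big(\beta^{-1}a_{i^\ast}|y|^d\big)$ for a distinguished edge index $i^\ast$ and a weight $a_{i^\ast}$ comparable to $g_{\gamma,\gamma/\delta}(u_{i^\ast-1},u_{i^\ast})$, with $\widehat c$ depending only on $d,\delta$. Integrating the last location, $\int_{|y|^d>m}\rho(\beta^{-1}a|y|^d)\,\d y\leq\tfrac{\delta\omega_d}{\delta-1}\beta^{\delta}a^{-\delta}m^{1-\delta}$ for $m>1$ (treating the flat part $\{\rho=1\}$ and the polynomial tail separately), which is where the factor $m^{1-\delta}$ enters.

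It then remains to integrate the marks $u_0,\dots,u_k$ over the valley-shaped skeleton structure. Bounding $g_{\gamma,\gamma/\delta}(s,t)^{-1}\leq(s\wedge t)^{-\gamma}(s\vee t)^{-\gamma/\delta}$ for the ordinary weights and $a_{i^\ast}^{-\delta}\leq(s\wedge t)^{-\gamma\delta}(s\vee t)^{-\gamma}$ for the distinguished one, the mark integral factorises along the valley into one-dimensional integrals $\int_0^1u^{-c}\,\d u$. These are finite because $\gamma+\gamma/\delta<1$ (as in Lemma~\ref{lem:CoupledPath}, valid whenever $\gamma<\delta/(\delta+1)$), because $2\gamma<1$, and --- crucially --- because $\gamma(\delta+1)<1$, which is exactly what renders integrable the singularity of exponent $\gamma\delta+\gamma$ produced at the minimal-mark vertex adjacent to the distinguished long edge. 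This last point is the only place where the sharper hypothesis $\gamma<1/(\delta+1)$ of Part~\textit{(i)}, rather than just $\gamma<\delta/(\delta+1)$, is genuinely needed --- consistent with the fact that for larger $\gamma$ the answer is no longer $m^{1-\delta}$. Altogether the contribution of a fixed $k$ is at most $K_1K_2^{\,k}\,m^{1-\delta}$ with $K_2$ proportional to $\beta$.

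Finally I would sum over $n$ via $\sum_{n\geq k}\binom{n}{k}(\beta\widehat C)^{n-k}=(1-\beta\widehat C)^{-k-1}$, which is legitimate because $\beta\widehat C<\beta_0\widehat C=\tfrac{2^{d\delta+3}}{2^{d\delta+3}+1}<1$, and then over $k$, leaving a geometric series whose ratio is a fixed constant times $\beta$. For $\beta<\beta_0$ this ratio lies below $1$ --- $\beta_0$ being precisely the value that makes this happen --- and the resulting finite total is the constant $C_2$ of~\eqref{eq:MainThmSGConstant}, yielding $\P_o(\M_\beta>m)\leq C_2m^{1-\delta}$. I expect the main obstacle to be the bookkeeping in the middle step: the iterated convolution bound proliferates into many terms with dilated arguments, and one must keep every constant it generates uniformly geometric in $k$ and independent of $m$, with a base small enough that, after multiplication by $(1-\beta\widehat C)^{-k-1}$ and the mark constants, the series in $k$ still converges for all $\beta<\beta_0$; this balancing is exactly what pins down $\beta_0$. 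A secondary subtlety is that the spatial localisation built into the $\rho$-kernels must really be used --- bounding the intermediate kernels crudely by $1$ would make the $\B(m^{1/d})$-integrals blow up like powers of $m$ --- so the convolution structure, not just trivial estimates, is indispensable.
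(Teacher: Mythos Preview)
Your overall strategy --- skeleton decomposition via~\eqref{eq:pathsBoundI}--\eqref{eq:pathsBoundII}, spatial integration to extract $m^{1-\delta}$, mark integration using $\gamma(\delta+1)<1$, then geometric summation --- is the paper's strategy, and your identification of where the hypothesis $\gamma<1/(\delta+1)$ enters (the exponent $\gamma+\gamma\delta$ at the minimal-mark vertex adjacent to the distinguished edge) is exactly right; cf.~\eqref{eq:Bound(i)GammaCondition}. The one substantive difference is how you handle the spatial integral over the skeleton chain. You propose to iterate the two-term convolution split from the proof of Lemma~\ref{lem:MainThmLGUpperDomModel}; the paper instead uses a single pigeonhole: since $x_0=o$ and $|x_k|^d>m$, some skeleton edge has length at least $m^{1/d}/k$ (or $|x_k|/k$ for the last one). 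The paper then bounds only that $\rho$-factor by $\beta^\delta g^{-\delta}k^{d\delta}m^{-\delta}$, integrates the remaining $k-1$ factors freely over $\R^d$ (each contributing $\beta g^{-1}\tfrac{\omega_d\delta}{\delta-1}$), and integrates the freed vertex over $\B(m^{1/d})$ to pick up $\omega_d m$. This yields only a polynomial overhead $k\cdot k^{d\delta}$ and a per-step spatial constant of exactly $\tfrac{\omega_d\delta}{\delta-1}$, which after the mark integral (Lemma~\ref{lem:CoupledPath} contributes $\tfrac{\delta}{\delta-\gamma(\delta+1)}$ per step) combines with $\widehat C$ via the binomial theorem to $(\beta/\beta_0)^n$ on the nose --- hence the specific $C_2$ of~\eqref{eq:MainThmSGConstant}.

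Your iterated convolution, by contrast, proliferates into $2^{k-1}$ terms (not a ``bounded number''), each carrying an extra factor $2^{d\delta}$ per step from the triangle-inequality dilation $\rho(t/2^d)\leq 2^{d\delta}\rho(t)$. The skeleton base constant is therefore strictly larger than the paper's, so the geometric series in $k$ (after multiplication by $(1-\beta\widehat C)^{-k-1}$) would converge only for $\beta$ below a threshold strictly smaller than $\beta_0$, and the final constant would not be the $C_2$ of~\eqref{eq:MainThmSGConstant}. So your claim that ``$\beta_0$ is precisely the value that makes this happen'' is not borne out by the method you describe. The fix is simple: replace the convolution iteration by the one-step pigeonhole above. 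Your summation order (first $n$ via the negative-binomial identity, then $k$) is a legitimate alternative to the paper's (binomial in $k$, then $n$) and would recover the same threshold once the spatial constants are corrected.
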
 
	\begin{proof}
		Using the notation from the previous section, writing \(\o=\x_0\) we have
		\begin{equation*}
			\begin{aligned}
				\{\M_\beta>m\} & = \bigcup_{n\in\N}\big\{\exists \x\colon |x|^d>m \text{ and } \o\xleftrightarrow[B(m^{1/d})]{n}\x \text{ in }\widehat{\cG}_\o^\beta\big\} \\
				& =\bigcup_{n\in\N}\bigcup_{k=1}^n\big\{\exists \x_1,\dots, \x_k\colon |x_1|^d,\dots,|x_{k-1}|^d<m, |x_k|^d>m \text{ and } \x_0\xleftrightarrow[\x_0,\x_1,\dots,\x_k]{n}\x_k \text{ in }\widehat{\cG}_\o^\beta\big\}.
			\end{aligned}
		\end{equation*}
		From now on we assume that each path considered occurs in \(\widehat{\cG}_\o^\beta\). We deduce from Mecke's equation~\cite{LastPenrose2017}
		\begin{equation*}
			\begin{aligned}
				\P_o(
				&
					\M_\beta>m) 
					\leq \sum_{\substack{n\in\N \\ k\leq n}} \, \int\limits_0^1 \d u_0 \int\limits_{\substack{|x_k|^d>m \\ u_k\in(0,1)}}\hspace{-0.33cm}\d \x_k \, \P_{\x_0,\x_k}\big(\exists \x_1,\dots, \x_{k-1}\colon |x_1|^d,\dots,|x_{k-1}|^d<m \text{ and } \x_0\xleftrightarrow[\x_0,\x_1,\dots,\x_k]{n}\x_k\big).
			\end{aligned}
		\end{equation*}
        Let us focus on a fixed path of length \(n\) and skeleton length \(k\). Then, we must have \(|x_{\ell}-x_{\ell-1}|>m^{1/d}/k\) for some \(\ell\in\{1,\dots,k-1\}\) or \(|\x_{k}-\x_{k-1}|>|\x_k|/k\). {Let us start with the first case, fix some \(\ell\in\{1,\dots,k-1\}\), and work on the event \(|x_{\ell}-x_{\ell-1}|>m^{1/d}/k\). Then, applying the probability bounds on paths derived in~\eqref{eq:pathsBoundII} yields} 
		\begin{equation*}
			\begin{aligned}
				\int\limits_0^1 \d u_0 & \int\limits_{\substack{|x_k|^d>m \\ u_k\in(0,1)}}\d \x_k \, \P_{\x_0,\x_k}\big(\exists \x_1,\dots, \x_{k-1}\colon |x_1|^d,\dots,|x_{k-1}|^d<m, |x_{\ell}-x_{\ell-1}|^d>\tfrac{m}{k^d}, \text{ and } \x_0\xleftrightarrow[\x_0,\x_1,\dots,\x_k]{n}\x_k\big) \\
				& \leq \binom{n}{k}(\beta \widehat{C})^{n-k} \beta^\delta  k^{d\delta} m^{-\delta}\int\limits_{\substack{(0,1)^{k+1} \\ \text{skeleton structure}}}{\d u_0\cdots\d u_k} \, g_{\gamma,\nicefrac{\gamma}{\delta}}(u_\ell,u_{\ell-1})^{-\delta} \\
				& \phantom{\binom{n}{k}(\beta \widehat{C})^{n-k}\int\limits_{\text{skeleton structure}}\bigotimes_{j=0}^k \d u_j}
				\times \int\limits_{\substack{B(m^{1/d})^{k-1} \\ |x_k|^d>m}}{\d x_1\cdots \d x_k} \prod_{\substack{i=1 \\ i\neq \ell}}^k \rho\big(\beta^{-1}g_{\gamma,\nicefrac{\gamma}{\delta}}(u_{i-1},u_i)|x_{i-1}-x_i|^d\big),
			\end{aligned}
		\end{equation*}
		where we used the independence of vertex locations and vertex marks, i.e., \(\d\x_j = \d x_j \d u_j\), and the bound {\(\rho(\beta^{-1}g_{\gamma,\gamma/\delta}(u_{\ell-1},u_\ell)|x_\ell-x_{\ell-1}|^d)\leq \beta^{\delta}g_{\gamma,\gamma/\delta}(u_{\ell-1},u_\ell)^{-\delta} m^{-\delta}k^{d\delta}\)}. Focusing on the integral {and leaving aside the factor that precedes it for now}, we perform the change of variables \(z_i=\beta^{-1/d}g_{\gamma,\nicefrac{\gamma}{\delta}}(u_i,u_{i-1})^{1/d}(x_i-x_{i-1})\) {and drop the restriction on the domain of integration of \(x_i\)}, starting from \(i=n\) and successively going to \(i=\ell+1\), and continuing from \(i=\ell-1\) and successively going to \(i=1\), which yields the upper bound
		\begin{equation*}
			\begin{aligned}
				\beta^{k-1} 
				& 
					\int\limits_{\substack{(0,1)^{k+1} \\ \text{skeleton structure}}}{\d u_0\cdots\d u_k}\, g_{\gamma,\nicefrac{\gamma}{\delta}}(u_\ell,u_{\ell-1})^{-\delta} \prod_{\substack{i=1 \\ i\neq \ell}}^k g_{\gamma,\nicefrac{\gamma}{\delta}}(u_i,u_{i-1})^{-1} \Big(\int\limits_{B(m^{1/d})}\d x_\ell\Big)\Big(\int\limits_{\R^d}\d z \, \rho(|z|^d)\Big)^{k-1} 
				 \\& 
				 	\leq \omega_d m \beta^{k-1}(\tfrac{\omega_d \delta}{\delta-1})^{k-1}\int\limits_{\substack{(0,1)^{k+1} \\ \text{skeleton structure}}}{\d u_0\cdots\d u_k} \, g_{\gamma,\nicefrac{\gamma}{\delta}}(u_\ell,u_{\ell-1})^{-\delta} \prod_{\substack{i=1 \\ i\neq \ell}}^k g_{\gamma,\nicefrac{\gamma}{\delta}}(u_i,u_{i-1})^{-1}, 
			\end{aligned}
		\end{equation*}
		{using \(\int\limits_{\R^d} \rho(|z|^d)=\omega_d\delta/(\delta-1)\). Note here that \(\rho (\beta^{-1}g_{\gamma,\gamma/\delta}(u_{\ell+1},u_{\ell}) |x_{\ell+1}-x_\ell|^d)\) is the only term containing \(x_\ell\) in the product, namely in \(\rho\big(\beta^{-1}g_{\gamma,\gamma/\delta}(u_{\ell+1},u_{\ell}) |x_{\ell+1}-x_\ell|^d\big)\). Therefore, after applying the change of variables to \(x_{\ell+1}\) and then integrating over all of \(\R^d\), \(x_\ell\) no longer appears, resulting in the empty integral.} Hence,
		\begin{equation*}
			\begin{aligned}
				& \int\limits_0^1 \d u_0 \int\limits_{\substack{|x_k|^d>m \\ u_k\in(0,1)}}\d \x_k \, \P_{\x_0,\x_k}\big(\exists \x_1,\dots, \x_{k-1}\colon |x_1|^d,\dots,|x_{k-1}|^d<m, |x_{\ell}-x_{\ell-1}|^d>\tfrac{m}{k^d}, \text{ and } \x_0\xleftrightarrow[\x_0,\x_1,\dots,\x_k]{n}\x_k\big) \\
				& \ \leq m^{1-\delta}\binom{n}{k} \beta^{n+(\delta-1)}k^{d\delta}\widehat{C}^{n-k}(\tfrac{\omega_d \delta}{\delta-1})^k\int\limits_{\substack{(0,1)^{k+1} \\ \text{skeleton structure}}}{\d u_0\cdots\d u_k} \, g_{\gamma,\nicefrac{\gamma}{\delta}}(u_\ell,u_{\ell-1})^{-\delta} \prod_{\substack{i=1 \\ i\neq \ell}}^k g_{\gamma,\nicefrac{\gamma}{\delta}}(u_i,u_{i-1})^{-1},
			\end{aligned}
		\end{equation*}
		{adding an additional factor \(\omega_d\delta/(\delta-1)>1\) for convenience.} In the case that \(|x_{k}-x_{k-1}|>|x_k|/k\), we obtain similarly
		\begin{equation*}
			\begin{aligned}
				\int\limits_0^1 \d u_0 & \hspace{-0.2cm}\int\limits_{\substack{|x_k|^d>m \\ u_k\in(0,1)}} \hspace{-0.2cm} \d \x_k \, \P_{\x_0,\x_k}\big(\exists \x_1,\dots, \x_{k-1}\colon |x_1|^d,\dots,|x_{k-1}|^d<m, |x_{k}-x_{k-1}|^d>\tfrac{|x_k|^d}{k^d}, \text{ and } \x_0\xleftrightarrow[\x_0,\x_1,\dots,\x_k]{n}\x_k\big) \\
				& \leq \binom{n}{k}(\beta \widehat{C})^{n-k} \beta^\delta  k^{d\delta} \int\limits_{\substack{(0,1)^{k+1} \\ \text{skeleton structure}}}{\hspace{-0.7cm} \d u_0\cdots\d u_k} \, g_{\gamma,\nicefrac{\gamma}{\delta}}(u_k,u_{k-1})^{-\delta}\Big(\int\limits_{|x_k|^d>m} \d x_k \ |x_k|^{-d\delta}\Big) \\
				& \phantom{\binom{n}{k}(\beta \widehat{C})^{n-k}\int\limits_{\text{skeleton structure}}\bigotimes_{j=0}^k \d u_j}
				\times \int\limits_{\substack{B(m^{1/d})^{k-1} \\ |x_k|^d>m}}{\hspace{-0.5cm}  \d x_1\cdots\d x_{k-1}} \prod_{i=1}^{k-1} \rho\big(\beta^{-1}g_{\gamma,\nicefrac{\gamma}{\delta}}(u_{i-1},u_i)|x_{i-1}-x_i|^d\big) \\
				& \leq m^{1-\delta}\binom{n}{k} \beta^{n+(\delta-1)}k^{d\delta}\widehat{C}^{n-k}(\tfrac{\omega_d \delta}{\delta-1})^k\int\limits_{\substack{(0,1)^{k+1} \\ \text{skeleton structure}}}{\hspace{-0.7cm} \d u_0\cdots\d u_k} \, g_{\gamma,\nicefrac{\gamma}{\delta}}(u_k,u_{k-1})^{-\delta} \prod_{\substack{i=1}}^{k-1} g_{\gamma,\nicefrac{\gamma}{\delta}}(u_i,u_{i-1})^{-1}.
			\end{aligned}
		\end{equation*}
		Since both cases yield the same upper bound, we infer, by summing over \(\ell\),
		\begin{equation}\label{eq:part(i)upperBound}
			\begin{aligned}
				 \P_o & (\M_\beta>m) 
				 \\ &
				 	\leq m^{1-\delta}\sum_{\substack{n\in\N \\ k\leq n}} \binom{n}{k}\beta^{n+(\delta-1)}
					k^{d\delta}\widehat{C}^{n-k}(\tfrac{\omega_d \delta}{\delta-1})^k 
					\sum_{\ell=1}^k\int\limits_{\substack{(0,1)^{k+1} \\ \text{skeleton} \\ \text{structure}}}\frac{{\d u_0\cdots\d u_k}}{g_{\gamma,\nicefrac{\gamma}{\delta}}(u_\ell,u_{\ell-1})^{\delta}} \prod_{\substack{i=1 \\ i\neq \ell}}^k g_{\gamma,\nicefrac{\gamma}{\delta}}(u_i,u_{i-1})^{-1}.
			\end{aligned}
		\end{equation}
		It remains to calculate the integral with respect to the vertex marks. Denoting by \(h\) the index of the smallest vertex marks among the skeleton, the integral under consideration reads
		\begin{equation}\label{eq:intMinimalMarkLocation}
			\begin{aligned}
				\sum_{h=0}^{k} \, \int\limits_{\substack{1>u_0>u_1>\dots>u_h \\ u_h<u_{h+1}<\dots u_k}} {\hspace{-0.7cm} \d u_0\cdots\d u_k} \, g_{\gamma,\nicefrac{\gamma}{\delta}}(u_\ell,u_{\ell-1})^{-\delta} \prod_{\substack{i=1 \\ i\neq \ell}}^k g_{\gamma,\nicefrac{\gamma}{\delta}}(u_i,u_{i-1})^{-1}.
			\end{aligned}
		\end{equation}
		We have to distinguish various cases for \(\ell\) and \(h\). Let us start with the easy case \(h=k\). That is, we assume the skeleton's vertex marks are strictly decreasing. The above integral is then bounded from above for any \(\ell\), using the definition \(g_{\gamma,\nicefrac{\gamma}{\delta}}(u,v)=(u\vee v)^{\gamma/\delta}(u\wedge v)^\gamma\), {the fact \(u_0^{-\gamma/\delta}\leq u_0^{-\gamma-\gamma/\delta}\), a similar bound for \(u_k^{-\gamma/\delta}\), and that all \(u_i\leq 1\),}
		\begin{equation*}
			\begin{aligned}
				& \int\limits_{u_0>\dots>u_{\ell-2}} \hspace{-0.5cm} \d u_0\cdots\d u_{\ell-2} \prod_{i=0}^{\ell-2} u_i^{-\gamma-\gamma/\delta} \int\limits_{0}^{u_{\ell-2}}\d u_{\ell-1} u_{\ell-1}^{-2\gamma}\int\limits_{0}^{u_{\ell-1}}\d u_{\ell} u_\ell^{-\gamma/\delta-\gamma\delta} 	\int\limits_{u_{\ell+1}>\dots>u_k}\hspace{-0.5cm}\d u_{\ell+1}\cdots \d u_k \prod_{i=\ell+1}^{k} u_i^{-\gamma-\gamma/\delta}	
				\\ &
					\quad {\leq \Big(\int\limits_0^1 \d u \, u^{-\gamma-\gamma/\delta}\Big)^{k-1} \int\limits_{0}^{1}\d u_{\ell-1} \, u_{\ell-1}^{-2\gamma}\int\limits_{0}^{1}\d u_{\ell} \, u_\ell^{-\gamma/\delta-\gamma\delta}}
					\leq \frac{1}{(1-2\gamma)(1-\gamma(\delta+1/\delta))} \Big(\frac{\delta}{\delta-\gamma(\delta+1)}\Big)^{k},
			\end{aligned}
		\end{equation*}
		since \(1-\gamma(\delta+1/\delta)>0\) and \(1-2\gamma>0\), {as \(1/\delta+1<2<\delta+1<1/\gamma\), where we again added an additional factor \(\delta/(\delta-\gamma(\delta+1))\) for convenience.}
		It is easy to see that the case \(h=0\) and any \(\ell\) yields the same bound. We consider next, the case \(h\not\in\{0,k\}\). Here, our calculations depend on the relation between \(h\) and \(\ell\). We start with the case \(\ell\in\{h+2,\dots,k\}\) and obtain the upper bound, {by performing similar bounds and calculations as before},
		\begin{equation*}
			\begin{aligned}
				\int\limits_{u_0>\dots>u_{h-1}} 
				& 
					\hspace{-0.5cm}\d u_0\cdots\d u_{h-1}\prod_{i=0}^{h-1} u_i^{-\gamma-\gamma/\delta}\int\limits_{0}^{u_{h-1}}\d u_{h} \, u_{h}^{-2\gamma} \int\limits_{u_{h+1}<\dots<u_{\ell-2}} \hspace{-0.5cm}\d u_{h+1}\cdots\d u_{\ell-2} \prod_{i=h+1}^{\ell-2} u_i^{-\gamma-\gamma/\delta}
				\\ & 
					\qquad \times \int\limits_{u_{\ell-2}}^1\d u_{\ell-1} \, u_{\ell-1}^{-\gamma/\delta-\gamma\delta}\int\limits_{u_{\ell-1}}^1 \d u_\ell \, u_\ell^{-2\gamma}\int\limits_{u_{\ell+1}<\dots<u_k}\hspace{-0.5cm}\d u_{\ell+1}\cdots\d u_{k} \prod_{i=\ell+1}^{k}\d u_i \ u_i^{-\gamma-\gamma/\delta} 
				\\ & 
					\leq \frac{1}{(1-2\gamma)^2(1-\gamma(\delta+1/\delta)} \Big(\frac{\delta}{\delta-\gamma(\delta+1)}\Big)^k,
			\end{aligned}
		\end{equation*}
		{adding once more the factors required to have exponent \(k\).} Again, the case \(\ell\in\{1,\dots,h-1\}\) yields the same bound and only the cases \(\ell\in\{h,h+1\}\) remain. We start with the case \(\ell=h\). Then, the integral under consideration is bounded by
		\begin{equation}\label{eq:Bound(i)GammaCondition}
			\begin{aligned}
				\int\limits_{u_0>\dots>u_{h-2}} 
				& 
					\hspace{-0.5cm}\d u_0\cdots\d u_{h-2} \prod_{i=0}^{h-2} u_i^{-\gamma-\gamma/\delta}\int\limits_{0}^{u_{h-2}}\d u_{h-1} \, u_{h-1}^{-2\gamma} \int\limits_{0}^{u_{h-1}}\d u_{h} \, u_{h}^{-\gamma-\gamma\delta}\hspace{-0.5cm}\int\limits_{u_{h+1}<\dots<u_{k}} \hspace{-0.5cm}\d u_{h+1}\cdots\d u_{k} \prod_{i=h+1}^{k}u_i^{-\gamma-\gamma/\delta} 
				\\ & 
					\leq \frac{1}{(1-2\gamma)(1-\gamma(\delta+1))}\Big(\frac{\delta}{\delta-\gamma(\delta+1)}\Big)^k,
			\end{aligned}
		\end{equation}
		{as \(1/\delta+1<2<\delta+1<1/\gamma\).} Finally, for \(\ell=h+1\), we obtain the same bound with the same calculations. Hence, we obtain for~\eqref{eq:intMinimalMarkLocation},
		\begin{equation*}
			\begin{aligned}
				\sum_{h=0}^{k} \, \int\limits_{\substack{1>u_0>u_1>\dots>u_h \\ u_h<u_{h+1}<\dots u_k}} 
				& 
					\hspace{-0.7cm}\d u_0\cdots\d u_{k} \, g_{\gamma,\nicefrac{\gamma}{\delta}}(u_\ell,u_{\ell-1})^{-\delta} \prod_{\substack{i=1 \\ i\neq \ell}}^k g_{\gamma,\nicefrac{\gamma}{\delta}}(u_i,u_{i-1})^{-1}
				\\ &
					\leq \frac{k+1}{(1-2\gamma)^2(1-\gamma(\delta+1))}\Big(\frac{\delta}{\delta-\gamma(\delta+1)}\Big)^k.
			\end{aligned}
		\end{equation*}
		{Since the observed bound does not depend on \(\ell\) and as \(\ell\leq k\leq n\), we obtain by plugging this into~\eqref{eq:part(i)upperBound} and recalling the definition of \(\widehat{C}\) from Lemma~\ref{lem:MainThmLGUpperDomModel}},
		\begin{equation*}
			\begin{aligned}
				\P_o(\M_\beta>m) & \leq m^{1-\delta}\sum_{n\in\N}\tfrac{\beta^{\delta-1}(n+1)n^{d\delta+1}}{(1-2\gamma)^2(1-\gamma(\delta+1))} \beta^n\sum_{k=1}^n \binom{n}{k}\widehat{C}^{n-k}\Big(\tfrac{\omega_d \delta^2}{(\delta-1)(\delta-\gamma(\delta+1))}\Big)^k\leq C_2 m^{1-\delta},
			\end{aligned}
		\end{equation*}
		where 
	\begin{equation}\label{eq:MainThmSGConstant}
			C_2 = \tfrac{\beta^{\delta-1}}{(1-2\gamma)^2(1-\gamma(\delta+1))}\sum_{n\in\N}(n+1)n^{d\delta+1} (\tfrac{\beta}{\beta_0})^n
		\end{equation}
		is a finite constant as \(\beta<\beta_0\).
	\end{proof}

	%%%%%%%%%%%%%%%%%%%%%%%%%%%%%%%%
	%%%% Part (ii) upper bound %%%%%
	%%%%%%%%%%%%%%%%%%%%%%%%%%%%%%%%
	\subsubsection{Proof of the upper bound in Part~(iii)}\label{sec:upperBound(iii)}
	In this section, we prove the upper bound of Part~(iii) of Theorem~\ref{thm:Main} following the strategy outlined in Section~\ref{sec:Strategy}. We fix the following notation throughout this section. As above, we have
	\[
		\beta_0= \tfrac{1}{2^{d\delta+3}+1}\cdot \tfrac{\delta-1}{\omega_d \delta}\big(1-\gamma\tfrac{\delta+1}{\delta}\big)=\tfrac{(\delta-1)(\delta-\gamma(\delta+1))}{(2^{d\delta+3}+1)\omega_d \delta^2}.
	\]
	Recall the notation of \(\C_\beta=\C_{\beta,\gamma,0,\alpha}(\0)\) for the component of the origin in the soft Boolean model and the notation \(\M_\beta=\M_{\beta,\gamma,0,\delta}\) for its Euclidean diameter (to the power \(d\)). Recall further
	\[ 
		\zeta = (\delta-1)/\delta \quad \text{ and } \quad s_m=m^{\zeta}.
	\]

	We start by bounding the probability of the presence of powerful vertices inside the component of the origin. Here, powerful refers to a vertex with {associated ball volume larger than \(s_m\)} as demanded in the lower bound. We show that for \(\beta<\beta_0\), the presence of such a vertex in the whole component is as unlikely as its presence in the direct neighbourhood of the origin and hence the probability of this event matches the probability of the lower bound. Afterwards, we deduce bounds for all other paths connecting \(\o\) to distance \(m^{1/d}\) when no powerful vertex is present depending on the occurrence of long edges.
	
	\paragraph{Powerful vertices contained in the component of the origin.}	
	{Recall that a vertex with mark \(u\) has associated volume \(u^{-\gamma}\).} Let us now define the event that there exists a powerful vertex in the component of the origin as 	
	\begin{equation}\label{eq:UpperPowerfulEndvertex} 
	\begin{aligned} 
		\cF(m): 
		& 
			= \big\{\exists \x\in\C_\beta\colon u_x^{-\gamma}>s_m\big\}\cap\{U_o^{-\gamma}<s_m\} 
			= \big\{\exists\x\colon  u_x^{-\gamma}>s_m \text{ and } \o\xleftrightarrow[\R^d\times(s_m^{-1/\gamma},1)]{}\x\big\}\cap\{U_o^{-\gamma}<s_m\}.
	\end{aligned}
	\end{equation}
	Here, we work on the event that, the origin is not itself a powerful vertex since this only happens with probability \(s_m^{-1/\gamma}\) which is of lower order than the desired one. {The second equality states that \(\0\) and the powerful vertex \(\x\) are connected by a path consisting of vertices with associated volume smaller than \(s_m\) only. Observe that this is no restriction since any previous vertex on the path with volume larger than \(s_m\) would already fulfil the event.}
	The following lemma coincides with \ref{stepB}.
	
	\begin{lemma}\label{lem:upperPowerfulVertex}
		Consider the soft Boolean model \(\cG^\beta_\0=\cG^{\beta,\gamma,0,\delta}(\xi_\0)\) with \(\delta>1\), {\(1/\gamma-1>1/\delta\)}, and \(\beta<\beta_0\). Then, there exists \(M>1\) such that for all \(m>M\), we have
		\[
			\P_o\big(\cF(m)\big)\leq C_6^{(1)} m^{-(1-\gamma)\zeta/\gamma},
		\]
		where \(C_6^{(1)}\) is given below in~\eqref{eq:constUpperPowerfulVertex}.
	\end{lemma}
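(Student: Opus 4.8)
The plan is to establish the bound by a first-moment estimate over all shortcut-free paths joining the origin to a powerful vertex, feeding them into the path bounds~\eqref{eq:pathsBoundI}--\eqref{eq:pathsBoundII} assembled in Section~\ref{sec:MainThmLGSkeleton}. First I would use the decomposition in~\eqref{eq:UpperPowerfulEndvertex} together with the Mecke equation~\cite{LastPenrose2017} to release the powerful vertex \(\x=(x,u_x)\), \(u_x<s_m\), as an integration variable: since on \(F(m)\) the origin carries mark \(U_o>s_m\), writing \(\x_0=\o=(o,u_0)\) and \(\x_k=\x\),
\[
  \P_o(F(m)) \le \int_{s_m}^1\!\d u_0\int_{\R^d}\!\d x\int_0^{s_m}\!\d u_x\ \P_{(o,u_0),(x,u_x)}\big(\o\xleftrightarrow[\R^d\times(s_m,1)]{}\x \text{ in }\cG^\beta_{\o,\x}\big).
\]
I would then dominate \(\cG^\beta\) by \(\widehat{\cG}^\beta\) via Lemma~\ref{lem:MainThmLGUpperDomModel} Part~\textit{(i)} and expand the right-hand side using~\eqref{eq:pathsBoundI}--\eqref{eq:pathsBoundII}, summing over the path length \(n\) and skeleton length \(k\).

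The decisive simplification over the proof of Proposition~\ref{prop:mainThm(i)upperBound} is that here the endpoint \(\x_k\) automatically has the smallest mark in the skeleton --- its mark lies below \(s_m\) while all the others exceed \(s_m\) --- so the ``skeleton structure'' forces strictly decreasing marks \(u_0>u_1>\dots>u_k\) and there is only one case to analyse. I would integrate out the locations \(x_k,x_{k-1},\dots,x_1\) one at a time, using \(\int_{\R^d}\rho(\beta^{-1}g_{\gamma,\gamma/\delta}(u_{i-1},u_i)|x_{i-1}-x_i|^d)\,\d x_i=\beta\tfrac{\omega_d\delta}{\delta-1}g_{\gamma,\gamma/\delta}(u_{i-1},u_i)^{-1}\), which turns \(\prod_{i=1}^k\rho(\cdots)\) into \(\beta^k\big(\tfrac{\omega_d\delta}{\delta-1}\big)^k\prod_{i=1}^k g_{\gamma,\gamma/\delta}(u_{i-1},u_i)^{-1}\); since the marks are decreasing this product equals \(u_0^{-\gamma/\delta}\big(\prod_{j=1}^{k-1}u_j^{-\gamma-\gamma/\delta}\big)u_k^{-\gamma}\). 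For the mark integrals I would drop the ordering and the lower cut-off \(s_m\) on the intermediate marks, bound \(\int_{s_m}^1 u_0^{-\gamma/\delta}\,\d u_0\le \tfrac{\delta}{\delta-\gamma}\), apply Lemma~\ref{lem:CoupledPath} --- which needs exactly \(\gamma<\delta/(\delta+1)\) --- to the \(k-1\) intermediate integrals to get a factor \(\big(\tfrac{\delta}{\delta-\gamma(\delta+1)}\big)^{k-1}\), and keep \(\int_0^{s_m}u_k^{-\gamma}\,\d u_k=\tfrac{1}{1-\gamma}s_m^{1-\gamma}=\tfrac{1}{1-\gamma}m^{-(1-\gamma)\zeta}\) untouched; this last integral is the only source of the \(m\)-dependence, and the resulting exponent \(-(1-\gamma)\zeta\) is precisely the one in Theorem~\ref{thm:Main} Part~\textit{(iii)}. (For \(k=1\), \(n=1\) one recovers the ``direct neighbour'' term, matching the lower bound of Proposition~\ref{lem:MainThmLGLower}.)

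It then remains to sum the series \(\sum_{n\in\N}\sum_{k=1}^n\binom{n}{k}(\beta\widehat{C})^{n-k}A^k\) with \(A=\beta\tfrac{\omega_d\delta^2}{(\delta-1)(\delta-\gamma(\delta+1))}\); by the binomial theorem this equals \(\sum_{n}(\beta\widehat{C}+A)^n\), and the one computation worth carrying out carefully is that \(\beta\widehat{C}+A=\beta/\beta_0\), so that the sum is finite precisely because \(\beta<\beta_0\). Collecting the constants yields the claim with an explicit \(C_6^{(1)}\) of the form \(\tfrac{\delta-\gamma(\delta+1)}{(\delta-\gamma)(1-\gamma)}\cdot\tfrac{\beta}{\beta_0-\beta}\) (up to rechecking), and \(M\) need only be large enough that \(s_m<1\). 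I do not expect any genuine obstacle here beyond bookkeeping: the subtlety is to keep the skeleton-structure constraint, the BK decomposition and the two successive changes of variables (spatial first, then marks) aligned so that the constant comes out exactly at the threshold \(\beta_0\), with no new probabilistic ingredient needed beyond Lemmas~\ref{lem:MainThmLGUpperDomModel} and~\ref{lem:CoupledPath}.
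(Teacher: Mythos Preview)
Your proposal is correct and follows essentially the same route as the paper's proof: release the powerful endpoint by Mecke, dominate by \(\widehat{\cG}^\beta\), apply the skeleton bounds~\eqref{eq:pathsBoundI}--\eqref{eq:pathsBoundII} noting that the skeleton marks are forced to be strictly decreasing, integrate out the spatial variables by the change of variables, then handle the mark integrals via \(\int_0^{s_m}u_k^{-\gamma}\,\d u_k=\tfrac{1}{1-\gamma}s_m^{1-\gamma}\) together with Lemma~\ref{lem:CoupledPath}, and finish with the binomial identity \(\beta\widehat{C}+A=\beta/\beta_0\). The only discrepancy is cosmetic: the paper extends the final sum to include \(n=0\), which gives \(C_6^{(1)}=\tfrac{\delta-\gamma(\delta+1)}{(\delta-\gamma)(1-\gamma)}\cdot\tfrac{\beta_0}{\beta_0-\beta}\) rather than your \(\tfrac{\beta}{\beta_0-\beta}\).
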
  
	\begin{proof}
	Observe that the skeleton of each path considered in \(\cF(m)\) has necessarily a skeleton with decreasing vertex marks. This is due to the fact that the target vertex \(\x\) is the most powerful vertex on the path. {Further, each volume except for the last vertex of the path is smaller than \(s_m\), which corresponds to a mark larger than \(s_m^{-1/\gamma}\).} Using Mecke's equation~\cite{LastPenrose2017}, \eqref{eq:pathsBoundI}, and~\eqref{eq:pathsBoundII}, we hence immediately infer
	\begin{equation*}
		\begin{aligned}
			\P_o(\cF(m))
			&
				\leq \int\limits_{s_m^{-1/\gamma}}^1 \d u_o \int\limits_{\R^d\times (0,s_m^{-1/\gamma})} \hspace{-0.5cm}\d\x \ \P_{\o,\x}\big(\0\xleftrightarrow[ \R^d\times (s_m^{-1/\gamma},1)]{}\x \text{ in } \cG_{\o,\x}^\beta\big) 
			\\& 
				\leq \sum_{\substack{n\in\N \\ k\in\{1,\dots,n\}}} \binom{n}{k} (\beta\widehat{C})^{n-k} \int\limits_{s_m^{-1/\gamma}}^1 \d u_0 \int\limits_{\substack{(\R^d)^k \\ u_0>u_1>\dots>u_{k-1}>s_m^{-1/\gamma}>u_k}}{\hspace{-1.3cm} \d\x_1\cdots \d\x_k} \ \prod_{i=1}^k \rho\big(\tfrac{1}{\beta}u_j^{\gamma}u_{j-1}^{\gamma/\delta}|x_j-x_{j-1}|^d\big),
		\end{aligned}
	\end{equation*} 
    where we again identified \(\o=\x_0\). We again make use of the independence of vertex locations and marks and use that \(\d \x_k=\d x_k \d u_k\) to perform the change of variables \(z_i=\beta^{-1/d}u_i^{\gamma/d}u_{j-1}^{\gamma/d\delta}(x_i-x_{i-1})\) together with \(\int \rho(|x|^d)\d x = (\omega_d \delta)/(\delta-1)\), the fact that \(1-\gamma-\nicefrac{\gamma}{\delta}>0\) and that all marks are bounded by one, to obtain
	\begin{equation*}
		\begin{aligned}
			\P_o (\cF(m)) & \leq \sum_{\substack{n\in\N \\ k\in\{1,\dots,n\}}} \beta^n \binom{n}{k} \widehat{C}^{n-k} \big(\tfrac{\omega_d \delta}{\delta-1}\big)^k \int\limits_{s_m^{-1/\gamma}}^1 \d u_0 \, u_0^{-\nicefrac{\gamma}{\delta}}\int\limits_{s_m^{-1/\gamma}}^{u_0}\d u_1 \cdots \int\limits_{s_m^{-1/\gamma}}^{u_{k-2}}\d u_{k-1} \, \prod_{j=1}^{k-1} u_j^{-\gamma-\nicefrac{\gamma}{\delta}}\int\limits_{0}^{s_m}\d u_k \, u_k^{-\gamma} \\
			& \leq \tfrac{\delta}{(\delta-\gamma)(1-\gamma)}s_m^{1-\gamma} \sum_{n\in\N}\beta^n 	\sum_{k=0}^n \binom{n}{k} \widehat{C}^{n-k} \big(\tfrac{\omega_d \delta^2}{(\delta-1)(\delta-\gamma(\delta+1))}\big)^k \\
			& \leq \tfrac{\delta}{(\delta-\gamma)(1-\gamma)}s_m^{1-\gamma} \sum_{n=0}^\infty \big(\tfrac{\beta}{\beta_0}\big)^n = \big(\tfrac{\delta}{(\delta-\gamma)(1-\gamma)}\cdot \tfrac{\beta_0	}{\beta_0-\beta}\big)s_m^{1-\gamma},
		\end{aligned}
	\end{equation*}
	where we used the definition of \(\widehat{C}\) from Lemma~\ref{lem:MainThmLGUpperDomModel} and \(\beta<\beta_0\). Hence, choosing
	\begin{equation}\label{eq:constUpperPowerfulVertex}
		C_6^{(1)} = \tfrac{(\delta-1)(\delta-\gamma(\delta+1))}{(\delta-\gamma)(1-\gamma)(2^{d\delta+3}+1)\omega_d\delta}\cdot\tfrac{1}{\beta_0-\beta}
	\end{equation}
	concludes the proof.
	\end{proof}

	Let us shortly remark that further restricting the target vertex \(\x\) in the event \(\cF(m)\) to be located in \(B(m^{1/d})\) corresponds to a version of the lower-bound strategy. More precisely, we define
 	\begin{equation*}
		\cE(m):= \big\{\exists \x\in\X\colon |x|^d<m, u_x<s_m, \text{ and } \0\xleftrightarrow[B(m^{1/d})\times (s_m^{-1/\gamma},1)]{}\x \text{ in } \cG_{\o}^\beta\big\}\cap\{U_o>s_m\}. 
	\end{equation*}
	Clearly, \(\cE(m)\subset \cF(m)\) and each vertex \(\x\) located in \(B(m^{1/d})\) fulfilling the event \(\cE(m)\) is connected to \(\0\) but also to some vertex at distance \(m^{1/d}\) of \(\o\) with a constant probability by our calculations in the proof of Proposition~\ref{lem:MainThmLGLower}.

	%%%%%%%%%%%%%%%%%%%%%%%%
	%%%%%% Long Edges %%%%%%
	%%%%%%%%%%%%%%%%%%%%%%%%
	\paragraph{The occurrence of long edges in the component of the origin.} %\label{sec:longEdges}
	In this section, we deal with the case that the origin is connected to a vertex at distance \(m^{1/d}\), where we can assume that no powerful vertex, as defined in the previous section, is used. {More precisely, we now only consider paths that do not contain any vertex with associated volume larger than \(s_m\), as the existence of such paths are contained in the event \(\cF(m^{1/d})\)}. To do so, we have to bound the occurrence of long edges in the component of the origin. Those edges play a crucial role as they contribute significantly to the spatial spread out and thus the Euclidean diameter of the component. Since we have no access to very powerful vertices for the strategy considered in this section, it is not a priori clear, where these long edges occur in a path. As outlined in Section~\ref{sec:Strategy}, we distinguish the cases whether the path connecting \(\0\) to distance \(m^{1/d}\) ends within distance \(2m^{1/d}\) or farther away. Additionally, we consider the event that there are no edges longer than \(d(m)^{1/d}\) present in \(\C^\beta\cap (B(2 m^{1/d})\times(s_m^{-1/\gamma},1))\), where \(d(m)\) is some real number in \((\beta m^{\gamma\zeta},m]\) to be specified later, {cf.\ the discussion in Section~\ref{subsec:discussion_log}.} With this at hand, we define the three events
	\begin{equation}\label{eq:MainThmLGupperEvents} 
		\begin{aligned}
			 \calG(m)
			 &
			 	:= \big\{\exists \x\colon m<|x|^d\leq 2^d m, \, u_x^{-\gamma}<s_m \text{ and }\0\xleftrightarrow[B(m^{1/d})\times(s_m^{-1/\gamma},1)]{}\x \big\}\cap\{U_o>s_m\}, 
			 \\
			 	\cH(m) 
			 &
			 	:= \big\{\exists \x,\y\colon |x|^d>2^d m, \, |y|^d<m, \, u_y^{-\gamma}, u_x^{-\gamma}<s_m \text{ with }\o\xleftrightarrow[B(m^{1/d})\times(s_m^{-1/\gamma},1)]{}\y\sim\x\big\} 
			 \\ & 
			 	\phantom{:=}\quad \cap \{U_o>s_m\}, 
			 \\
			 	\cI(d(m)) 
			 & 
			 	:= \big\{\exists \x,\y\colon |x|^d,|y|^d\leq 2^dm, \, u_x^{-\gamma},u_y^{-\gamma}<s_m, \, |y-x|^d>d(m),\o\xleftrightarrow[B(2m^{{1}/{d}})\times(s_m^{-1/\gamma},1)]{}\y\sim\x\big\} 
			 \\ & 
			 	\phantom{:=} \quad \cap\{U_o>s_m\}.
		\end{aligned}
	\end{equation}	
	Let us recall one last time that \(\beta_0=\tfrac{(\delta-1)(\delta-\gamma(\delta+1))}{(2^{d\delta+3}+1)\omega_d \delta^2}\), \(\zeta = (\delta-1)/\delta\), \( s_m=m^{\zeta}\) and that a vertex with associated ball volume smaller than \(s_m\) has mark larger than \(s_m^{-1/\gamma}\). We start by bounding the probability of the event \(\cH(m)\), which is \ref{stepC} in the outlined strategy.  
	\begin{lemma}\label{lem:MainThmLGupperH}
		Consider the soft Boolean model \(\cG^\beta_o=\cG^{\beta,\gamma,0,\delta}(\xi_o)\) with \(\delta>1\), {\(1/\delta<1/\gamma-1<\delta\)}, and \(\beta<\beta_0\). Then, there exists \(M>1\) such that, for all \(m>M\), we have
		\[
			\P_o\big(\cH(m)\big)\leq C_6^{(2)} m^{- (1-\gamma)\zeta/\gamma},
		\]
		where \(C_6^{(2)}\) is given below in~\eqref{eq:MainThmLGupperLemHConst}.
	\end{lemma}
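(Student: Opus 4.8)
The plan is to bound $\P_o(H(m^{1/d}))$ by a first-moment estimate on the number of pairs $(\y,\x)$ witnessing the event, decoupling the path from the origin to $\y$ — which stays inside $\B(m^{1/d})$ and uses only marks above $s_m$ — from the single edge $\y\sim\x$ that leaves the ball. This is exactly the shape of the argument already used for $F(m)$ in Lemma~\ref{lem:upperPowerfulVertex}, with the crucial difference that $\y$ need not be the most powerful vertex of its path.

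First I would apply Mecke's equation to the two vertices $\y$ and $\x$, write $\o=\x_0$, and observe that, since $\x$ lies at distance larger than $2m^{1/d}$ while the path realising $\o\xleftrightarrow[\B(m^{1/d})\times(s_m,1)]{}\y$ has its skeleton inside $\B(m^{1/d})$, the edge $\y\sim\x$ occurs disjointly from that path; so, by the BK inequality of~\cite{HvdHLM20} exactly as in the derivation of~\eqref{eq:BK}, the joint probability factorises and
\[
 \P_o\big(H(m^{1/d})\big)\le \int_{s_m}^1\!\d u_0\int_{\substack{|y|^d<m\\ u_y>s_m}}\!\d\y\int_{\substack{|x|^d>2^dm\\ u_x>s_m}}\!\d\x\; \P_{\o,\y}\big(\o\xleftrightarrow[\B(m^{1/d})\times(s_m,1)]{}\y\text{ in }\cG^\beta_{\o,\y}\big)\,\rho\big(\tfrac{1}{\beta}(u_x\wedge u_y)^\gamma|x-y|^d\big).
\]
For the innermost factor I would use $|x|^d>2^d m>2^d|y|^d\Rightarrow|x-y|\ge|x|/2$, together with $u_x\wedge u_y>s_m$ (which for $m$ large makes the argument of $\rho$ exceed $1$), so that the $\x$-integral is at most $\beta^\delta 2^{d\delta}\tfrac{\omega_d}{\delta-1}(2^dm)^{1-\delta}\,\Psi(u_y)$, where $\Psi(u_y)=\int_{s_m}^1(u_x\wedge u_y)^{-\gamma\delta}\d u_x$ is bounded by a constant if $\gamma\delta<1$ and by $C\big(s_m^{1-\gamma\delta}+u_y^{-\gamma\delta}\big)$ otherwise (with $\log(1/s_m)$ replacing $s_m^{1-\gamma\delta}$ when $\gamma\delta=1$). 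This peels off the announced factor $m^{1-\delta}$.

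Next I would estimate the path probability through the domination $E(\cG^\beta)\subset E(\widehat{\cG}^\beta)$ and the bounds~\eqref{eq:pathsBoundI}--\eqref{eq:pathsBoundII}: sum over path length $n$ and skeleton size $k$, pay $\binom nk(\beta\widehat C)^{n-k}$, perform the spatial change of variables $z_i=\beta^{-1/d}g_{\gamma,\gamma/\delta}(u_{i-1},u_i)^{1/d}(x_i-x_{i-1})$ along the skeleton while enlarging every ball integral to $\R^d$ (at no cost, producing $(\omega_d\delta/(\delta-1))^k$ and Jacobians $\beta g_{\gamma,\gamma/\delta}(u_{i-1},u_i)^{-1}$), and integrate out the skeleton marks over $(s_m,1)$. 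Because $\y$ is not the minimal-mark vertex, the skeleton decreases to a minimal-mark vertex at some index $h\in\{0,\dots,k\}$ and increases afterwards; exactly as in the proof of Proposition~\ref{prop:mainThm(i)upperBound}, each interior skeleton vertex then contributes $u_i^{-\gamma-\gamma/\delta}$ (integrable over $(0,1)$ since $\gamma<\delta/(\delta+1)$, which — after summing the resulting geometric series in $n$ and $k$ — converges to a finite constant precisely because $\beta<\beta_0$, via the same bookkeeping with $\widehat C$ and Lemma~\ref{lem:CoupledPath}), the vertex at $h$ contributes an extra $u_h^{-2\gamma}$, and the endpoints contribute $u_0^{-\gamma/\delta}$ or $u_0^{-\gamma}$, and $u_y^{-\gamma/\delta}$ or $u_y^{-\gamma}$. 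Collecting everything leaves $\P_o(H(m^{1/d}))\le C\,m^{1-\delta}\,(1\vee s_m^{1-2\gamma})\int_{s_m}^1 u_y^{-a}\Psi(u_y)\,\d u_y$ with $a\in\{\gamma/\delta,\gamma\}$, and it remains to show this is $\le C_6^{(2)}m^{-(1-\gamma)\zeta}$.

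The last step is to collect the powers of $m$, distinguishing $\gamma\delta\le1$ and $\gamma\delta>1$: when $\gamma\delta\le1$ all remaining mark integrals are bounded and the estimate reduces to $Cm^{1-\delta}$, which suffices since $\gamma>1/(\delta+1)$ is exactly the inequality $1-\delta\le-(1-\gamma)\zeta$; when $\gamma\delta>1$ the dominant term of $\int_{s_m}^1 u_y^{-a}\Psi(u_y)\d u_y$ is $\asymp s_m^{1-\gamma(\delta+1)}$, so that with $\zeta=(\delta-1)/(\gamma\delta)$ one gets $m^{1-\delta}s_m^{1-\gamma(\delta+1)}=m^{1-\delta+\frac{\delta^2-1}{\delta}-\zeta}=m^{\frac{\delta-1}{\delta}-\zeta}=m^{-(1-\gamma)\zeta}$, and the factor $(1\vee s_m^{1-2\gamma})$ is harmless because $\gamma<\delta/(\delta+1)$ forces $2-2\gamma-\gamma(\delta+1/\delta)\ge 1-\gamma(\delta+1)$. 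The main obstacle is precisely this bookkeeping: for $1/2<\gamma<\delta/(\delta+1)$ several mark integrals (through $s_m^{1-\gamma\delta}$, $s_m^{1-2\gamma}$ and $s_m^{1-\gamma(\delta+1/\delta)}$) individually carry positive powers of $m$ that exceed the target, and one has to verify — case by case according to the location of the minimal-mark vertex, using $\gamma<\delta/(\delta+1)$ on one side and $\gamma>1/(\delta+1)$ on the other — that they always recombine to exactly $m^{-(1-\gamma)\zeta}$ with no leftover logarithm, and that the $n$- and $k$-sums converge uniformly in $m$.
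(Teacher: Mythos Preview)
Your proposal is essentially the paper's proof: Mecke on $(\y,\x)$, factor off the edge $\y\sim\x$ (the paper uses conditional independence of edges rather than BK, but your use of BK is harmless overkill), integrate $\x$ to extract $m^{1-\delta}\Psi(u_y)$, and bound the path $\o\leftrightarrow\y$ via the skeleton machinery~\eqref{eq:pathsBoundI}--\eqref{eq:pathsBoundII} with a case split on the position $h$ of the minimal-mark skeleton vertex; you correctly identify the dominant case $h=k$ (yielding $s_m^{1-\gamma(\delta+1)}$ and hence exactly $m^{-(1-\gamma)\zeta}$), and your check $2-2\gamma-\gamma(\delta+1/\delta)\ge 1-\gamma(\delta+1)$ is precisely the paper's comparison of~\eqref{eq:MainThmLGupperHResult5} against~\eqref{eq:MainThmLGupperHResult6}.

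One small oversight: your claim that for $\gamma\delta\le 1$ the estimate ``reduces to $Cm^{1-\delta}$'' drops the factor $(1\vee s_m^{1-2\gamma})$, which is large when $\gamma>1/2$, and the regime $\gamma\delta<1$, $\gamma>1/2$, $1/(\delta+1)<\gamma<\delta/(\delta+1)$ is nonempty (e.g.\ $\gamma=0.6$, $\delta\in(3/2,5/3)$). The fix is immediate---$m^{1-\delta}s_m^{1-2\gamma}$ has exponent $1-\delta-\zeta(1-2\gamma)$, and $1-\delta+\gamma\zeta=-(\delta-1)^2/\delta\le 0$ shows this is still $\le-(1-\gamma)\zeta$---but it is exactly the sub-case the paper handles by splitting into~\eqref{eq:MainThmLGupperHToBound1} and~\eqref{eq:MainThmLGupperHToBound2} and treating $\gamma\lessgtr 1/2$ separately around~\eqref{eq:MainThmLGupperHResult1}--\eqref{eq:MainThmLGupperHResult2}, the ``case by case'' verification you flag but do not carry out.
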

	\begin{proof}
		Since the path considered in the event \(\cH(m)\) lies in the ball \(B(m^{1/d})\) until the final step that then connects to some vertex at distance \(2m^{1/d}\), we can write using the conditional independence of edges and Mecke's equation
		\begin{equation*}
			\begin{aligned}
				& \P_o\big(\cH(m)\big) 
				\\ & 
					\leq \int\limits_{s_m^{-1/\gamma}}^1 \d u_o \int\limits_{|x|^d>2^d m}\d x\int\limits_{s_m^{-1/\gamma}}^1 \hspace{-0.3cm}\d u_x \ \P_{\o,\x}\big(\exists \y\colon |y|^d<m, \o\xleftrightarrow[B(m^{1/d})\times(s_m^{-1/\gamma},1)]{}\y\sim\x \text{ in }\cG^\beta_{\o,\x}\big) 
				\\ & 
					\leq \int\limits_{s_m^{-1/\gamma}}^1 \hspace{-0.15cm} \d u_o \int\limits_{|y|^d<m}\hspace{-0.3cm}\d y \int\limits_{s_m^{-1/\gamma}}^1 \hspace{-0.15cm} \d u_y \int\limits_{|x|^d>2^d m}\hspace{-0.3cm}\d x\int\limits_{s_m^{-1/\gamma}}^1 \hspace{-0.15cm} \d u_x \ \P_{\o,\y}\big(\o\xleftrightarrow[B(m^{1/d})\times(s_m^{-1/\gamma},1)]{}\y \text{ in }\cG^{\beta}_{\o,\y}\big) \big(\tfrac{1}{\beta}(u_x\wedge u_y)^\gamma |x-y|^d\big)^{-\delta} 
				\\ & 
					\leq \beta^{\delta} \hspace{-0.15cm}\int\limits_{s_m^{-1/\gamma}}^1 \hspace{-0.15cm} \d u_o \int\limits_{|y|^d<m} \hspace{-0.3cm}\d y \int\limits_{s_m^{-1/\gamma}}^1 \hspace{-0.15cm} \d u_y \P_{\o,\y}\big(\o\xleftrightarrow[B(m^{1/d})\times(s_m^{-1/\gamma},1)]{}\y \text{ in }\cG^{\beta}_{\o,\y}\big)\int\limits_{s_m^{-1/\gamma}}^1 \hspace{-0.15cm}\d u_x (u_x\wedge u_y)^{-\gamma\delta} \hspace{-0.2cm}\int\limits_{|x-y|^d>m} \hspace{-0.4cm}\d x \, |x-y|^{-d\delta}.
			\end{aligned}
		\end{equation*}
		The integral with respect to \(\x=(x,u_x)\) reads
		\begin{equation*}
			\begin{aligned}
				\int\limits_{s_m^{-1/\gamma}}^1\d u_x (u_x\wedge u_y)^{-\gamma\delta} \int\limits_{|x-y|^d>m} \hspace{-0.2cm} \d x \, |x-y|^{-d\delta} & = \tfrac{\omega_d}{\delta-1}m^{1-\delta}\Bigg[\int\limits_{s_m^{-1/\gamma}}^{u_y} \d u_x \, u_x^{-\gamma\delta} + \int\limits_{u_y}^1 \d u_x \, u_y^{-\gamma\delta}\Bigg] \\
				& \leq \tfrac{\omega_d}{\delta-1}m^{1-\delta}\Big[\tfrac{u_y^{1-\gamma\delta}\vee s_m^{1-\gamma\delta}}{|1-\gamma\delta|} + u_y^{-\gamma\delta}\Big] \\	
                %& \leq \tfrac{\omega_d}{\delta-1}m^{1-\delta}\Big[\big(1+\tfrac{1}{|1-\gamma\delta|}\big)u_y^{-\gamma\delta}+\tfrac{1}{|1-\gamma\delta|}s_m^{1-\gamma\delta}\Big] \\
                & \leq \big(1+\tfrac{1}{|1-\gamma\delta|}\big)\tfrac{\omega_d}{\delta-1}u_y^{-\gamma\delta}m^{1-\delta}+\tfrac{\omega_d }{|1-\gamma\delta|(\delta-1)}m^{1-\delta-\zeta(1-\gamma\delta)/\gamma},
			\end{aligned}
		\end{equation*}
		using \(s_m=m^{\zeta}\) in the last step. {Here, we assumed that \(\gamma\delta\neq 1\) and comment on this matter below.} To finish the proof, we therefore have to bound the two terms 
		\begin{equation}\label{eq:MainThmLGupperHToBound1}
			\begin{aligned}
				\tfrac{\beta^{\delta}\omega_d }{|1-\gamma\delta|(\delta-1)}m^{1-\delta-\zeta(1-\gamma\delta)/\gamma}\int\limits_{s_m^{-1/\gamma}}^1 \d u_o \int\limits_{|y|^d<m}\d y \int\limits_{s_m^{-1/\gamma}}^1 \d u_y \ \P_{\o,\y}\big(\o\xleftrightarrow[B(m^{1/d})\times(s_m^{-1/\gamma},1)]{}\y\text{ in }\cG^{\beta}_{\o,\y}\big)
			\end{aligned}
		\end{equation}
		and
		\begin{equation}\label{eq:MainThmLGupperHToBound2}
			\begin{aligned}
				\big(1+\tfrac{1}{|1-\gamma\delta|}\big)\tfrac{\beta^\delta \omega_d}{\delta-1} m^{1-\delta}\int\limits_{s_m^{-1/\gamma}}^1 \d u_o \int\limits_{|y|^d<m}\d y \int\limits_{s_m^{-1/\gamma}}^1 \d u_y \ u_y^{-\gamma\delta}\P_{\o,\y}\big(\o\xleftrightarrow[B(m^{1/d})\times (s_m^{-1/\gamma},1)]{}\y\text{ in }\cG^{\beta}_{\o,\y}\big).
			\end{aligned}
		\end{equation}
		%Let us start with the 
  \emph{Step 1: Bounding the integral in~\eqref{eq:MainThmLGupperHToBound1}.} 
  We use the skeleton strategy and deduce by~\eqref{eq:pathsBoundI} and~\eqref{eq:pathsBoundII}, together with the same change of variables as above, 		
  		\begin{equation}\label{eq:MainThmLGupperHToBound1Calculation}
			\begin{aligned}
				\int\limits_{s_m^{-1/\gamma}}^1 
				&
					\d u_o \int\limits_{|y|^d<m}\d y \int\limits_{s_m^{-1/\gamma}}^1 \d u_y \ \P_{\o,\y}\big(\o\xleftrightarrow[B(m^{1/d})\times(s_m^{-1/\gamma},1)]{}\y \text{ in }\widehat{\cG}^\beta_{\0,\y}\big) 
				\\ & 
					\leq \sum_{\substack{n\in\N \\ k\leq n}}\binom{n}{k} (\beta\widehat{C})^{n-k}\ \int\limits_{s_m^{-1/\gamma}}^1\d u_0  \hspace{-0.3cm} \int\limits_{\substack{\big(B(m^{1/d})\times(s_m^{-1/\gamma},1)\big)^{k} \\ \x_0,\dots,\x_k \text{ form skeleton}}}\hspace{-1cm} \d \x_1\cdots\d\x_k \ \prod_{j=1}^{k} \rho\big(\beta^{-1}g_{\gamma,\nicefrac{\gamma}{\delta}}(u_{j-1},u_j)|x_j-x_{j-1}|^d\big)
				\\ & 
					\leq \sum_{\substack{n\in\N \\ k\leq n}}\binom{n}{k} \beta^n \widehat{C}^{n-k}(\tfrac{\omega_d \delta}{\delta-1})^k \hspace{-0.3cm}\int\limits_{\substack{(s_m^{-1/\gamma},1)^{k+1} \\ u_0,\dots,u_k \text{ have skeleton structure}}}\hspace{-1.5cm}\d u_0\cdots\d u_k \ \prod_{j=1}^k	g_{\gamma,\nicefrac{\gamma}{\delta}}(u_{j-1},u_j)^{-1}.
			\end{aligned}
		\end{equation}
		As  above in the proof of Proposition~\ref{prop:mainThm(i)upperBound}, we decompose the integral by considering
    skeletons where the \(h\)-th vertex has the minimum mark (or largest associated volume, respectively) and sum over all possible skeletons to obtain
		\begin{equation*}
			\begin{aligned}
				 \int\limits_{\substack{(s_m^{-1/\gamma},1)^{k+1} \\ u_0,\dots,u_k \text{ have skeleton structure}}} 
				 & 
				 	\hspace{-1.5cm}\d u_0\dots\d u_k \ \prod_{j=1}^k	g_{\gamma,\nicefrac{\gamma}{\delta}}(u_{j-1},u_j)^{-1} 
				 \\ & 
				 	\hspace{-2 cm} \leq \sum_{h = 0}^k \int\limits_{s_m^{-1/\gamma}}^1 \d u_0 \int\limits_{s_m^{-1/\gamma}}^{u_0}\d u_1 \dots \int\limits_{s_m^{-1/\gamma}}^{u_{h-1}}\d u_h \int\limits_{u_h}^1 \d u_{h+1}\dots\int\limits_{u_{k-1}}^1 \d u_k \ \prod_{j=1}^k g_{\gamma,\nicefrac{\gamma}{\delta}}(u_{j-1},u_j)^{-1}.		
				\end{aligned}
		\end{equation*}
		We have to distinguish the three cases, whether the minimum mark vertex is the first vertex, the last vertex, or some vertex in between. For \(h=0\) the integral reads
		\begin{equation*}
			\begin{aligned}
				\int\limits_{s_m^{-1/\gamma}}^1 \d u_0 \int\limits_{u_0}^{1}\d u_1 \dots \int\limits_{u_{k-1}}^{1}\d u_k \,  u_0^{-\gamma}\big(\prod_{j=1}^{k-1}u_j^{-\nicefrac{\gamma}{\delta}-\gamma}\big)u_k^{-\nicefrac{\gamma}{\delta}} \leq \tfrac{\delta}{\delta-\gamma}\big(\tfrac{\delta}{\delta-\gamma(\delta+1)}\big)^{k-1}\tfrac{1}{1-\gamma},
			\end{aligned}
		\end{equation*} 
		using \(1-\nicefrac{\gamma}{\delta}-\gamma>0\), similar to above. For \(h=k\), similarly and using that all marks are bounded by one,
		\begin{equation*}
			\begin{aligned}
				\int\limits_{s_m^{-1/\gamma}}^1 \d u_0 \int\limits_{s_m^{-1/\gamma}}^{u_0}\d u_1 \dots \int\limits_{s_m^{-1/\gamma}}^{u_{k-1}}\d u_k \,  u_0^{-\nicefrac{\gamma}{\delta}}\big(\prod_{j=1}^{k-1}u_j^{-\nicefrac{\gamma}{\delta}-\gamma}\big)u_k^{-\gamma} \leq \tfrac{\delta}{\delta-\gamma}\big(\tfrac{\delta}{\delta-\gamma(\delta+1)}\big)^{k-1}\tfrac{1}{1-\gamma}.
			\end{aligned}
		\end{equation*}
		Finally, for all other values of \(h\), we infer
\begin{equation}\label{eq:MainThmLGupperHInBetweenL}
			\begin{aligned}
				\int\limits_{s_m^{-1/\gamma}}^1 \d u_0 & \int\limits_{s_m^{-1/\gamma}}^{u_0}\d u_1 \dots \int\limits_{s_m^{-1/\gamma}}^{u_{h-1}}\d u_h \int\limits_{u_h}^1 \d u_{h+1}\dots\int\limits_{u_{k-1}}^1 \d u_k u_0^{-\gamma}\big(\prod_{j=1}^{h-1}u_j^{-\nicefrac{\gamma}{\delta}-\gamma}\big)u_h^{-2\gamma}\big(\prod_{j=h+1}^{k-1}u_j^{-\gamma-\nicefrac{\delta}{\gamma}}\big)u_k^{-\nicefrac{\gamma}{\delta}} \\
				& \leq \tfrac{\delta}{\delta-\gamma}\big(\tfrac{\delta}{\delta-\gamma(\delta+1)}\big)^{k-2}\tfrac{1}{1-\gamma}\tfrac{1}{|1-2\gamma|}\big(1\vee m^{-\zeta(1-2\gamma)/\gamma}\big),
			\end{aligned}
		\end{equation} 
		where we assumed \(\gamma\neq 1/2\), and we comment on the excluded case below. If \(\gamma<1/2\), the above does not depend on \(m\) and since \(0<1-2\gamma<1-\gamma(1+1/\delta)\), the above calculations together with~\eqref{eq:MainThmLGupperHToBound1Calculation} yield the following bound on~\eqref{eq:MainThmLGupperHToBound1}, 
		\begin{equation} \label{eq:MainThmLGupperHResult1}
			\begin{aligned}
			 	& \tfrac{\beta^{\delta} \omega_d}{|1-\gamma\delta|(\delta-1)}m^{1-\delta-\zeta(1-\gamma\delta)/\gamma}\int\limits_{s_m^{-1/\gamma}}^1 \d u_o \int\limits_{|y|^d<m}\d y \int\limits_{s_m^{-1/\gamma}}^1 \d u_y \ \P_{\o,\y}\big(\o\xleftrightarrow[B(m^{1/d})\times(s_m^{-1/\gamma},1)]{}\y \text{ in }\cG^{\beta}_{\o,\y}\big) \\
			 	& \leq m^{1-\delta-\zeta(1-\gamma\delta)}\tfrac{\beta^{\delta} \omega_d(\delta-\gamma(\delta+1))^2}{\delta|1-\gamma\delta|(\delta-1)(\delta-\gamma)(1-\gamma)|1-2\gamma|}\sum_{n\in\N}\beta^n\sum_{k=0}^n (k+1)\binom{n}{k}\widehat{C}^{n-k}\big(\tfrac{\omega_d\delta^2}{(\delta-\gamma(\delta+1))(\delta-1)}\big)^k,\\
				& \leq m^{1-\delta-\zeta(1-\gamma\delta)} \tfrac{\beta^{\delta} \omega_d(\delta-\gamma(\delta+1))^2}{\delta|1-\gamma\delta|(\delta-1)(\delta-\gamma)(1-\gamma)|1-2\gamma|}\sum_{n\in\N}(n+1)\big(\tfrac{\beta}{\beta_0}\big)^n,
			\end{aligned}
		\end{equation}
		using the form of \(\widehat{C}\). Further, the sum is finite as \(\beta<\beta_0\). Using \(\zeta=(\delta-1)/\delta\), we find for the {exponent of} \(m\)
		\begin{equation*}
			 1-\delta-\tfrac{\zeta(1-\gamma\delta)}{\gamma} =-\tfrac{\zeta}{\gamma}<-\tfrac{(1-\gamma)\zeta}{\gamma}.
		\end{equation*}
		For \(1/2<\gamma<\delta/(\delta+1)\) or, equivalently \(1/\delta<1/\gamma -1<1\), we infer performing the same calculations
	\begin{equation}\label{eq:MainThmLGupperHResult2}
			\begin{aligned}
				& \tfrac{\beta^{\delta} \omega_d}{|1-\gamma\delta|(\delta-1)}m^{1-\delta-\zeta(1-\gamma\delta)/\gamma}\int\limits_{s_m^{-1/\gamma}}^1 \d u_o \int\limits_{|y|^d<m}\d y \int\limits_{s_m^{-1/\gamma}}^1 \d u_y \ \P_{\o,\y}\big(\o\xleftrightarrow[B(m^{1/d})\times(s_m^{-1/\gamma},1)]{}\y \text{ in }\cG^\beta_{\o,\y}\big) \\
			 	& \leq m^{1-\delta-\tfrac{\zeta(1-\gamma\delta)}{\gamma}-\tfrac{\zeta(1-2\gamma)}{\gamma}} \tfrac{\beta^{\delta} \omega_d(\delta-\gamma(\delta+1))^2}{\delta|1-\gamma\delta|(\delta-1)(\delta-\gamma)(1-\gamma)|1-2\gamma|}\sum_{n\in\N}(n+1)\big(\tfrac{\beta}{\beta_0}\big)^n
			\end{aligned}
		\end{equation}
		and it remains to {compare the exponent of \(m\) with the desired one, i.e., }
		\begin{equation*} 
			\begin{aligned}
				1-\delta-\tfrac{\zeta(1-\gamma\delta)}{\gamma}-\tfrac{\zeta(1-2\gamma)}{\gamma} & = -\tfrac{\zeta}{\gamma} - \tfrac{\zeta(1-2\gamma)}{\gamma} = -2\tfrac{(1-\gamma)}{\zeta}<-\tfrac{(1-\gamma)\zeta}{\gamma}	.		
			\end{aligned}
		\end{equation*}
 		If \(\gamma=1/2\) in~\eqref{eq:MainThmLGupperHResult1} only the \(|1-2\gamma|^{-1}\) term in the constant has to be replaced by \(\zeta\log(m)/\gamma\). Hence, for \(\beta<\beta_0\), there exists a constant \(C>0\) and some \(\zeta'>\zeta\) such that we find for~\eqref{eq:MainThmLGupperHToBound1}
		\begin{equation}\label{eq:MainThmLGupperHResult3}
			m^{1-\delta-\zeta(1-\gamma\delta)}\int\limits_{s_m^{-1/\gamma}}^1 \d u_o \int\limits_{|y|^d<m}\d y \int\limits_{s_m^{-1/\gamma}}^1 \d u_y \ \P_{\o,\y}\big(\o\xleftrightarrow[B(m^{1/d})\times(s_m^{-1/\gamma},1)]{}\y \text{ in }\cG^{\beta}_{\o,\y}\big)\leq  C m^{-(1-\gamma)\zeta'/\gamma}.
		\end{equation}
		
  \emph{Step 2: Bounding the integral in~\eqref{eq:MainThmLGupperHToBound2}}. We infer with the same arguments as above
		\begin{equation}\label{eq:MainThmLGupperHSecondBound}
			\begin{aligned}
				 \big(1+\tfrac{1}{|1-\gamma\delta|}\big) & \tfrac{\beta^\delta \omega_d}{\delta-1} m^{1-\delta}\int\limits_{s_m^{-1/\gamma}}^1  \d u_o \int\limits_{|y|^d<m}\d y \int\limits_{s_m^{-1/\gamma}}^1 \d u_y \ u_y^{-\gamma\delta}\P_{\o,\y}\big(\o\xleftrightarrow[B(m^{1/d})\times(s_m^{-1/\gamma},1)]{}\y\text{ in }\cG^{\beta}_{\o,\y}\big) \\
				& \leq \big(1+\tfrac{1}{|1-\gamma\delta|}\big)\tfrac{\beta^\delta \omega_d}{\delta-1} m^{1-\delta}\sum_{\substack{n\in\N}} \sum_{k=1}^n \beta ^n \binom{n}{k}\widehat{C}^{n-k}\big(\tfrac{\omega_d \delta}{\delta-1}\big)^k \\
				 & \phantom{\big(1+\tfrac{1}{|1-\gamma\delta|}\big)\tfrac{2^{d\delta}\beta^\delta \omega_d}{\delta-1}}
				 \times\sum_{h = 0}^k \ \int\limits_{\substack{u_0\geq \dots \geq u_h\geq s_m^{-1/\gamma} \\ u_h \leq u_{h+1}\leq \dots \leq u_k}} \hspace{-0.5cm} \d u_0\cdots\d u_k \ u_k^{-\gamma\delta} \prod_{j=1}^k g_{\gamma,\nicefrac{\gamma}{\delta}}(u_{j-1},u_j)^{-1},	
			\end{aligned}
		\end{equation}
		and we focus on the integral on the right-hand side. Again, we have to consider the three different cases depending on where the most powerful vertex is located within the path. We start with \(h=0\) and infer, similarly to the above, 
		\begin{equation*}
			\begin{aligned}
				 \int\limits_{s_m^{-1/\gamma}}^1 \d u_0 \int\limits_{u_0}^{1}\d u_1 \dots \int\limits_{u_{k-1}}^1 \d u_k \ u_0^{-\gamma}\Big(\prod_{j=1}^{k-1} u_j^{-\nicefrac{\gamma}{\delta}-\gamma}\Big) u_k^{-\gamma\delta-\nicefrac{\gamma}{\delta}} \leq \tfrac{1\vee m^{-\zeta(1-\gamma(\delta+\nicefrac{1}{\delta}))/\gamma}}{|1-\gamma(\delta+\nicefrac{1}{\delta})|}\big(\tfrac{\delta}{\delta-\gamma(\delta+1)}\big)^{k-1}\tfrac{1}{1-\gamma},
			\end{aligned}
		\end{equation*}
		using \(u_k>m^{-\zeta/\gamma}\). Hence, with the same arguments as above, combining this with~\eqref{eq:MainThmLGupperHSecondBound}, we infer for some constant \(C>0\) that 
		\begin{equation}\label{eq:MainThmLGupperHResult4}
			\begin{aligned}
				m^{1-\delta} & \sum_{\substack{n\in\N}}\beta^n \sum_{k=1}^n \binom{n}{k} \widehat{C}^{n-k}\big(\tfrac{\omega_d \delta}{\delta-1}\big)^k \int\limits_{s_m^{-1/\gamma}}^1 \d u_0 \int\limits_{u_0}^{1}\d u_1 \dots \int\limits_{u_{k-1}}^1 \d u_k \ u_0^{-\gamma}\Big(\prod_{j=1}^{k-1} u_j^{-\nicefrac{\gamma}{\delta}-\gamma}\Big) u_k^{-\gamma\delta-\nicefrac{\gamma}{\delta}}	\\
				& \leq C \big(m^{1-\delta}\vee m^{1-\delta-\zeta(1-\gamma(\delta+\nicefrac{1}{\delta}))/\gamma}\big) \tfrac{\beta_0}{\beta_0-\beta},
			\end{aligned}
		\end{equation}
		since \(\beta<\beta_0\). To deduce the order in \(m\), we only have to check the case when \(m^{1-\delta-\zeta(1-\gamma(\delta+1/\delta))/\gamma}\) is the dominant term. In the other case we immediately have \(m^{1-\delta}<m^{-(1-\gamma)\zeta/\gamma}\) since {\(\delta<1/\gamma-1\)}. As 
		\[
			1-\delta-\tfrac{\zeta(1-\gamma(\delta+\nicefrac{1}{\delta}))}{\gamma} = -\tfrac{\zeta(1-\nicefrac{\gamma}{\delta})}{\gamma}<-\tfrac{\zeta(1-\gamma)}{\gamma},
		\]
		since \(\delta>1\), we find that~\eqref{eq:MainThmLGupperHResult4} is bounded by \(Cm^{-(1-\gamma)\zeta'/\gamma}\) for some \(\zeta'>\zeta\). 
		
		The next case we consider is \(1\leq h\leq k-1\). Then, the integral on the right-hand side of~\eqref{eq:MainThmLGupperHSecondBound} reads
		\begin{equation*}
			\begin{aligned}
				\int\limits_{s_m^{-1/\gamma}}^1 \d u_0 & \int\limits_{s_m^{-1/\gamma}}^{u_0} \d u_1 \dots \int\limits_{s_m^{-1/\gamma}}^{u_{h-1}} \d u_h \int\limits_{u_h}^1 \d u_{h +1} \dots \int\limits_{u_{k-1}}^1 \d u_k \ u_0^{-\gamma/\delta} \Big(\prod_{\substack{j=1 \\ j\neq h}}^{k-1} u_j^{-\gamma-\gamma/\delta}\Big) u_h^{-2\gamma} u_k^{-\gamma\delta-\gamma/\delta}.
			\end{aligned}
		\end{equation*}
		We only consider the case when \(1-\gamma\delta-\gamma/\delta<0\),  since the other case coincides with~\eqref{eq:MainThmLGupperHInBetweenL} with a slightly changed constant. Now integrating out $u_k$ and then using that \(u_{k-1}\geq u_h\) we infer
		\begin{equation*}
			\begin{aligned}
				\int\limits_{s_m^{-1/\gamma}}^1 \d u_0 & \int\limits_{s_m^{-1/\gamma}}^{u_0} \d u_1 \dots \int\limits_{s_m^{-1/\gamma}}^{u_{h-1}} \d u_h \int\limits_{u_h}^1 \d u_{h +1} \dots \int\limits_{u_{k-1}}^1 \d u_k \, u_0^{-\gamma/\delta} \Big(\prod_{\substack{j=1 \\ j\neq h}}^{k-1} u_j^{-\gamma-\gamma/\delta}\Big) u_h^{-2\gamma} u_k^{-\gamma\delta-\gamma/\delta} \\
				& \leq \tfrac{\delta}{\gamma(\delta^2+1)-\delta}\big(\tfrac{\delta}{\delta-\gamma(\delta+1)}\big)^{k-h-1}
                {\int\limits_{s_m^{-1/\gamma}}^1 \d u_0 \int\limits_{s_m^{-1/\gamma}}^{u_0} \d u_1 \dots }\int\limits_{s_m^{-1/\gamma}}^{u_{h-1}} \d u_h
                 \ u_0^{-\gamma/\delta} \Big(\prod_{j=1}^{h-1}u_j^{-\gamma-\nicefrac{\gamma}{\delta}}\Big)u_h^{1-2\gamma-\gamma\delta-\gamma/\delta} \\
				& \leq \tfrac{\delta}{\gamma(\delta^2+1)-\delta}\big(\tfrac{\delta}{\delta-\gamma(\delta+1)}\big)^{k-2}\tfrac{\delta}{\delta-\gamma} m^{-\zeta(2-2\gamma-\gamma\delta-\gamma/\delta)/\gamma},
			\end{aligned}
		\end{equation*}
		where we have again restricted ourselves to the case \(2-2\gamma-\gamma\delta-\gamma/\delta<0\) since otherwise the \(\gamma<1/2\) case of~\eqref{eq:MainThmLGupperHInBetweenL} applies again. As above, combining with~\eqref{eq:MainThmLGupperHSecondBound} and since \(\beta<\beta_0\), we find \(C>0\) such that
		\begin{equation}\label{eq:MainThmLGupperHResult5}
			\begin{aligned}
				m^{1-\delta}\sum_{\substack{n\in\N}} 
				& 
					\beta^n \sum_{k=1}^n \binom{n}{k}\widehat{C}^{n-k}\big(\tfrac{\omega_d \delta}{\delta-1}\big)^k \sum_{h = 1}^{k-1} \ \int\limits_{\substack{u_0\geq \dots \leq u_h\geq s_m^{-1/\gamma} \\ u_h \leq u_{h+1}\leq \dots \leq u_k}} \hspace{-0.5cm} \d u_0\cdots \d u_k \ u_k^{-\gamma\delta} \ \prod_{j=1}^k g_{\gamma,\gamma/\delta}(u_{j-1},u_j)^{-1}  
				\\&
					\leq C m^{1-\delta-\zeta(2-2\gamma-\gamma\delta-\gamma/\delta)/\gamma}	< Cm^{-(1-\gamma)\zeta'/\gamma}
			\end{aligned}
		\end{equation}
		for some \(\zeta'>\zeta\) since
		\begin{equation*} 
			\begin{aligned}
				1-\delta-\tfrac{\zeta(2-2\gamma-\gamma\delta-\gamma/\delta)}{\gamma} &= -2\tfrac{\zeta(1-\gamma)}{\gamma} + \tfrac{\zeta}{\delta} < -\tfrac{(1-\gamma)\zeta}{\gamma},		
			\end{aligned}
		\end{equation*}
		as \(1/\delta<1/\gamma -1 \).
		
		It remains to bound the case when the most powerful vertex of the path is the end vertex. This case can be seen as a pendant strategy to the one used in the lower bound in Lemma~\ref{lem:upperPowerfulVertex}: Namely, instead of connecting the origin to a vertex with mark slightly more powerful than \(s_m\), which then is connected to distant vertices, we now connect to vertices slightly less powerful. We shall see however that this strategy dominates in probability all former ones and is of the same order as the one from the lower bound. More precisely, for \(h=k\) the integral in~\eqref{eq:MainThmLGupperHSecondBound} under consideration reads
		\begin{equation*}
			\begin{aligned}
				\int\limits_{s_m^{-1/\gamma}}^1 \d u_0 \int\limits_{s_m^{-1/\gamma}}^{u_0}\d u_1 \dots \int\limits_{s_m^{-1/\gamma}}^{u_{k-1}}\d u_k \ u_0^{-\nicefrac{\gamma}{\delta}}\Big(\prod_{j=1}^{k-1} u_j^{-\gamma-\nicefrac{\gamma}{\delta}}\Big) u_k^{-\gamma\delta-\gamma} \leq \tfrac{1}{\gamma(\delta+1)-1}\big(\tfrac{\delta}{\delta-\gamma(\delta+1)}\big)^{k-1} \tfrac{\delta}{\delta-\gamma}m^{-\zeta(1-\gamma(\delta+1))/\gamma},
			\end{aligned}
		\end{equation*}
		where we have used that \(\gamma>1/(\delta+1)\), or equivalently \(1/\gamma-1<\delta\). Plugging this into the right-hand side of~\eqref{eq:MainThmLGupperHSecondBound} yields, performing similar calculations as before,		
	\begin{equation}\label{eq:MainThmLGupperHResult6}
			\begin{aligned}
				\big(1 & +\tfrac{1}{|1-\gamma\delta|}\big) \tfrac{\beta^\delta \omega_d}{\delta-1} m^{1-\delta}\sum_{\substack{n\in\N}}\beta^n  \sum_{k=1}^n \binom{n}{k}\widehat{C}^{n-k}\big(\tfrac{\omega_d \delta}{\delta-1}\big)^k \\
				& \phantom{abcdefg} \times \int\limits_{s_m^{-1/\gamma}}^1 \d u_0 \int\limits_{s_m^{-1/\gamma}}^{u_0}\d u_1 \dots \int\limits_{s_m^{-1/\gamma}}^{u_{k-1}}\d u_k \ u_0^{-\nicefrac{\gamma}{\delta}}\Big(\prod_{j=1}^{k-1} u_j^{-\gamma-\nicefrac{\gamma}{\delta}}\Big) u_k^{-\gamma\delta-\gamma} \\
				& \leq m^{-(1-\gamma)\zeta/\gamma} \big(1 +\tfrac{1}{|1-\gamma\delta|}\big)\tfrac{\beta^\delta \omega_d(\delta-\gamma(\delta+1))}{(\delta-1)(\gamma(\delta+1)-1)(\delta-\gamma)}\cdot \tfrac{\beta_0}{\beta_0-\beta},
			\end{aligned}
		\end{equation}  
		using again \(\beta<\beta_0\) and {\(1-\delta-\zeta(1-\gamma(\delta+1))/\gamma=-(1-\gamma)\zeta/\gamma\)}. Hence, combining~\eqref{eq:MainThmLGupperHResult4}, \eqref{eq:MainThmLGupperHResult5} and~\eqref{eq:MainThmLGupperHResult6}, we find that Term~\eqref{eq:MainThmLGupperHToBound2} is bounded by
		\[
			Cm^{-(1-\gamma)\zeta'/\gamma} + m^{-(1-\gamma)\zeta/\gamma} \big(1 +\tfrac{1}{|1-\gamma\delta|}\big)\tfrac{\beta^\delta (\delta-\gamma(\delta+1))^2}{(\gamma(\delta+1)-1)(\delta-\gamma)(1+2^{d\delta+3}) \delta^2}\cdot \tfrac{1}{\beta_0-\beta},
			%\sum_{n\in\N} \big(\tfrac{\beta}{\beta_0})^{n},
		\]
		where \(\zeta'>\zeta\) and hence \(m^{-(1-\gamma)\zeta'/\gamma}\) is of strictly smaller order than \(m^{-(1-\gamma)\zeta/\gamma}\). We set 
		\begin{equation}\label{eq:MainThmLGupperLemHConst}
			 C_6^{(2)}:= 2\big(1 +\tfrac{1}{|1-\gamma\delta|}\big)\tfrac{\beta^\delta (\delta-\gamma(\delta+1))^2}{(\gamma(\delta+1)-1)(\delta-\gamma)\delta^2}\cdot \tfrac{1}{\beta_0-\beta}
		\end{equation}
		and infer for \(\beta<\beta_0\) and sufficiently large \(m\)
		\[
			\P_o\big(\cH(m)\big)\leq C_6^{(2)}m^{-(1-\gamma)\zeta/\gamma},
		\]
		as claimed. {It only remains to comment on the previously excluded \(\gamma\delta=1\) case and argue that no qualitative changes appear. In this case, the \(m^{1-\delta-\zeta(1-\gamma\delta)/\gamma}/|1-\gamma\delta|\) term in~\eqref{eq:MainThmLGupperHToBound1} has to be replaced by \((\zeta/\gamma)\log(m) m^{1-\delta}\). However, all remaining calculations in Step~1 remain the same so that the right-hand side in~\eqref{eq:MainThmLGupperHResult3} still applies. Observe further that the \(1/|1-\gamma\delta|\) term in the constant of~\eqref{eq:MainThmLGupperHToBound2} is no longer required and nothing else changes, noting that \(u_y^{-\gamma\delta}=u_y^{-1}\). Carefully revisiting the calculations in Step~2, we observe that due to the additional product \(\prod g_{\gamma,\nicefrac{\gamma}{\delta}}(u_{j-1,u_j})^{-1}\) in~\eqref{eq:MainThmLGupperHSecondBound}, the exponent \(-1\) never appears so that all calculations can be performed, \emph{verbatim}, yielding the bound with the \(1/|1-\gamma\delta|\) term removed from \(C_6^{(2)}\).} 
		\end{proof}
		
		Our next result is to bound the probability of \(\cI(d(m))\), the event that the component of the origin restricted to vertices in \(B(2m^{1/d})\) contains {at least one edge} longer than \(d(m)^{1/d}\), where \(d(m)\in(\beta m^{\gamma\zeta},m]\), preparing \ref{stepD}.

		\begin{lemma}\label{lem:MainThmLGupperI}
			Consider the soft Boolean model \(\cG^\beta_o=\cG^{\beta,\gamma,0,\delta}(\xi_o)\) with \(\delta>1\), {\(1/\delta<1/\gamma-1<\delta\)}, and \(\beta<\beta_0\). Then, there exists \(M>1\) such that, for all \(m>M\), we have
		\[
			\P_o\big(\cI(d(m))\big)\leq C_6^{(2)} \big(m/d(m)\big)^{\delta-1} m^{-(1-\gamma)\zeta/\gamma},
		\]
		where \(C_6^{(2)}\) is the constant given above in~\eqref{eq:MainThmLGupperLemHConst}. 
		\end{lemma}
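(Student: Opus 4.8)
The plan is to mimic the proof of Lemma~\ref{lem:MainThmLGupperH} almost verbatim and to track only the (minor) modifications. The first step is to apply Mecke's equation to the two endpoints $\x$ and $\y$ of the long edge. After a harmless factor-$2$ union bound over which of the two is the vertex joined to the origin — a shortcut-free path realising $\o\leftrightarrow\x$ that uses the edge $\{\x,\y\}$ necessarily has $\y$ as its penultimate vertex, so it is the same configuration with $\x,\y$ interchanged — one reaches
\[
	\P_o\big(I(d(m)^{1/d})\big)\le \int\limits_{s_m}^1\!\d u_o\int\limits_{|x|^d\le 2^dm}\!\d x\int\limits_{s_m}^1\!\d u_x\int\limits_{\substack{|y|^d\le 2^dm\\ |y-x|^d>d(m)}}\!\d y\int\limits_{s_m}^1\!\d u_y\;\P_{\o,\x}\big(\o\xleftrightarrow[\B(2m^{1/d})\times(s_m,1)]{}\x \text{ in }\cG^\beta_{\o,\x}\big)\,\rho\big(\tfrac1\beta (u_x\wedge u_y)^\gamma|x-y|^d\big).
\]
Two observations reduce the right-hand side to the estimate already performed for $H(m^{1/d})$. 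Since $u_x,u_y>s_m$ and, by assumption, $d(m)>\beta m^{\gamma\zeta}=\beta s_m^{-\gamma}\ge\beta(u_x\wedge u_y)^{-\gamma}$, the condition $|y-x|^d>d(m)$ forces $\rho(\beta^{-1}(u_x\wedge u_y)^\gamma|x-y|^d)=\beta^\delta(u_x\wedge u_y)^{-\gamma\delta}|x-y|^{-d\delta}$; integrating this over $y$ with $|y-x|^d>d(m)$ gives $\tfrac{\omega_d}{\delta-1}d(m)^{1-\delta}$. This is the only place at which $d(m)$ enters the computation, and it replaces the factor $\tfrac{\omega_d}{\delta-1}m^{1-\delta}$ that appears at the corresponding step of the proof of Lemma~\ref{lem:MainThmLGupperH}; the ratio of the two is exactly $(m/d(m))^{\delta-1}$.

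The remaining integrals are then handled exactly as there. The $u_y$-integral is $\int_{s_m}^1(u_x\wedge u_y)^{-\gamma\delta}\,\d u_y\le(1+\tfrac1{|1-\gamma\delta|})u_x^{-\gamma\delta}+\tfrac1{|1-\gamma\delta|}s_m^{1-\gamma\delta}$, and the ensuing integral $\int\d u_o\int_{|x|^d\le2^dm}\d x\int\d u_x\,[u_x^{-\gamma\delta}\text{ or }s_m^{1-\gamma\delta}]\,\P_{\o,\x}(\o\xleftrightarrow[\B(2m^{1/d})\times(s_m,1)]{}\x)$ is bounded via~\eqref{eq:pathsBoundI}--\eqref{eq:pathsBoundII}, the change of variables $z_i=\beta^{-1/d}g_{\gamma,\gamma/\delta}(u_{i-1},u_i)^{1/d}(x_i-x_{i-1})$, and Lemma~\ref{lem:CoupledPath}, precisely as in Steps~1 and~2 of the proof of Lemma~\ref{lem:MainThmLGupperH}. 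Enlarging the ball from $\B(m^{1/d})$ to $\B(2m^{1/d})$ is immaterial, because after the change of variables the spatial skeleton integrals are extended to all of $\R^d$ regardless, so the ball only constrains the skeleton vertices and the bound is unchanged. As before, the dominant contribution comes from skeletons whose most powerful vertex is the endpoint $\x$ (the index $h=k$), which produces the mark factor $m^{-\zeta(1-\gamma(\delta+1))}$, whereas all other positions of the most powerful vertex and the $s_m^{1-\gamma\delta}$ boundary term yield strictly smaller orders $m^{-(1-\gamma)\zeta'}$ with $\zeta'>\zeta$. Multiplying the leading term by the factor $d(m)^{1-\delta}$ and using $1-\delta-\zeta(1-\gamma(\delta+1))=-(1-\gamma)\zeta$ produces $(m/d(m))^{\delta-1}m^{-(1-\gamma)\zeta}$ with the same prefactor $C_6^{(2)}$ as in Lemma~\ref{lem:MainThmLGupperH}, which is the assertion for $m$ large enough.

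I do not expect a genuinely new obstacle here: the work lies in verifying that replacing the cut-off $m^{1/d}$ by $d(m)^{1/d}$ alters the earlier estimate only through the single factor $(m/d(m))^{\delta-1}$, and that the hypothesis $d(m)>\beta m^{\gamma\zeta}$ is exactly what makes the long edge still a genuine (probability-$<1$) edge, so that the $\rho$-term linearises and the $y$-integral converges. The only mildly delicate bookkeeping point is the Mecke/BK step when $\x$ and $\y$ both lie in $\B(2m^{1/d})$, since a priori $\y$ could sit on the path to $\x$; but shortcut-freeness forces $\y$ to then be the penultimate vertex of that path, i.e.\ the configuration is already counted with $\x$ and $\y$ swapped, which is absorbed by the factor $2$ above.
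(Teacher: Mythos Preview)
Your proposal is correct and follows the paper's proof essentially verbatim: both apply Mecke's equation to the pair $\x,\y$, integrate out the long edge to produce the factor $\tfrac{\omega_d}{\delta-1}d(m)^{1-\delta}$ in place of the $\tfrac{\omega_d}{\delta-1}m^{1-\delta}$ arising in Lemma~\ref{lem:MainThmLGupperH}, pull out the ratio $(m/d(m))^{\delta-1}$, and then quote the identical Step~1/Step~2 mark-integral computations from that lemma. Your additional remarks---the factor-$2$ union bound to handle the possibility that $\y$ sits on the path to $\x$, the explicit use of the hypothesis $d(m)>\beta m^{\gamma\zeta}$ to force $\rho$ into its power-law tail, and the observation that the enlarged ball $\B(2m^{1/d})$ is harmless because the spatial skeleton integrals are extended to $\R^d$ after the change of variables---are all valid and simply make explicit points the paper leaves implicit.
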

		\begin{proof}
			We calculate similarly to above
			\begin{equation*}
				\begin{aligned}
					& \P_o\big(\cI(d(m))\big) \\
					& \leq \int\limits_{s_m^{-1/\gamma}}^1 \d u_o \int\limits_{|x|^d<2^dm}\d x \int\limits_{s_m^{-1/\gamma}}^1 \d u_x \hspace{-0.3cm} \int\limits_{\substack{|y|^d<2^d m \\ |x-y|^d>d(m)}} \hspace{-0.3cm}\d y\int\limits_{s_m^{-1/\gamma}}\d u_y \, \P_{\o,\x}\big(\o\xleftrightarrow[B(2m^{1/d})\times(s_m^{-1/\gamma},1)]{}\x \text{ in }\cG^{\beta}_{\o,\x}\big) \tfrac{\beta^\delta}{(u_x\wedge u_y)^{\gamma\delta}|x-y|^{d\delta}} \\
					& \leq \beta^{\delta}\tfrac{\omega_d}{\delta-1}d(m)^{1-\delta}\int\limits_{s_m^{-1/\gamma}}^1 \d u_o \int\limits_{|x|^d<2^d m} \hspace{-0.3cm} \d x \int\limits_{s_m^{-1/\gamma}}^1 \hspace{-0.15cm} \d u_x \ \P_{\o,\x}\big(\o\xleftrightarrow[B(2m^{1/d})\times(s_m^{-1/\gamma},1)]{}\x \text{ in }\cG^{\beta}_{\o,\x} \big) \hspace{-0.15cm}\int\limits_{s_m^{-1/\gamma}}^1 \hspace{-0.15cm} \d u_y (u_x\wedge u_y)^{-\gamma\delta} \\
					& \leq \big(\tfrac{d(m)}{m}\big)^{1-\delta}\tfrac{\beta^{\delta}\omega_d}{\delta-1}\big((1+\tfrac{1}{|1-\gamma\delta|})m^{1-\delta}u_x^{-\gamma\delta}+\tfrac{m^{1-\delta-\zeta(1-\gamma\delta)/\zeta}}{|1-\gamma\delta|}\big) \\
					& \phantom{\leq \big(\tfrac{d(m)}{m}\big)^{1-\delta}}
					\times \int\limits_{s_m^{-1/\gamma}}^1 \d u_o \int\limits_{|x|^d<2^d m}\d x \int\limits_{s_m^{-1/\gamma}}^1 \d u_x \ \P_{\o,\x}\big(\o\xleftrightarrow[B(2m^{1/d})\times(s_m^{-1/\gamma},1)]{}\x\text{ in }\cG^\beta_{\o,\x}\big).			
				\end{aligned}
			\end{equation*} 
			Therefore, we can perform for the integral the exact same computations as in the previous proof, including the \(1=\gamma/\delta\) case, and we infer for \(\beta<\beta_0\) and sufficiently large \(m\),
			\begin{equation*}
				\P_o\big(\cI(d(m))\big)\leq C_6^{(2)} \big(m/d(m)\big)^{\delta-1} m^{-(1-\gamma)\zeta},
			\end{equation*}
				where \(C_6^{(2)}\) is the constant given above in~\eqref{eq:MainThmLGupperLemHConst}. 
   			This concludes the proof.
		\end{proof}
		
	\paragraph{Finalising the proof of the upper bound in Part (iii).}
	With the results of the previous sections, we have all tools at hand needed to finish the proof of Theorem~\ref{thm:Main}. Recall the events \(\cF(m)\) introduced in~\eqref{eq:UpperPowerfulEndvertex} as well as \(\calG(m)\), \(\cH(m)\), and \(\cI(d(m))\) introduced in~\eqref{eq:MainThmLGupperEvents}. The upper bound in Theorem~\ref{thm:Main} Part~(iii) is an immediate consequence of the following proposition.
	
	\begin{prop}\label{prop:mainThm(iii)upperBound}
		Consider the soft Boolean model \(\cG^\beta_o=\cG^{\beta,\gamma,0,\delta}(\xi_o)\) with \(\delta>1\), {\(1/\delta<1/\gamma-1<\delta\)}, and \(\beta<\beta_0\). Then, there exists \(M>1\) such that, for all \(m>M\), we have
		\[
			\P_o(\M_\beta>m)\leq C_6 \log(m)^{1\vee d(\delta-1)} m^{-(1-\gamma)\zeta/\gamma},
		\]
		where \(C_6\) is given below in~\eqref{eq:MainThmLGupperConst}.
	\end{prop}	
	\begin{proof}
	For large \(m\) by Lemma~\ref{lem:upperPowerfulVertex}
	\begin{equation*}
		\begin{aligned}
			\P_o(\M_\beta>m) 
			& 
				\leq \P_o(U_o^{-\gamma}\geq s_m)+\P_o(\{\M_\beta>m\}\cap\{U_o^{-\gamma }<s_m\}) 
			\\ &
				\leq s_m^{-1/\gamma} + \P_o(\cF(m)) +\P_o\big(\{\M_\beta>m\}\cap \cF(m)^c\cap\{U_o^{-\gamma}<s_m\}\big) 
			\\ &
				\leq \tfrac{3}{2}C_6^{(1)}m^{-(1-\gamma)\zeta/\gamma} + \P_o\big(\{\M_\beta>m\}\cap \cF(m)^c\cap\{U_o^{-\gamma}<s_m\}\big).
		\end{aligned}
	\end{equation*}
	To bound the remaining probability, we set \(\kappa=\frac{(\gamma \vee (1-\gamma))\zeta}{\gamma(\log\beta_0 -\log \beta)}\)
	and choose \(d(m)=\tfrac{m}{(\kappa\log m)^d}\) in \(\cI(d(m))\). We deduce, using Lemmas~\ref{lem:MainThmLGupperH} and~\ref{lem:MainThmLGupperI} 
		\begin{equation*}
			\begin{aligned}
				\P_o\big( 
				&
					\{\M_\beta>m\}\cap \cF(m)^c\cap\{U_o^{-\gamma}>s_m\}\big) 
				\\ & 
					\leq \P_o\big(\calG(m)\cap\cF(m)^c\big)+\P_o\big(\cH(m)\big)  
				%&\leq \P_o\big(\calG(m)\cap \cI(d(m))^c\big) + \P_o\big(\cI(d(m))\big) + C_6^{(2)} m^{-(1-\gamma)\zeta} \\
				\\ & 
					\leq \P_o\big(\calG(m)\cap \cF(m)^c\cap \cI(d(m))^c\big)+\tfrac{3C_6^{(2)}}{2}\big(\kappa \log(m)\big)^{d(\delta-1)}m^{-(1-\gamma)\zeta/\gamma},
			\end{aligned}
		\end{equation*}	
		for sufficiently large \(m\). It hence remains to bound the probability of \(\calG(m)\cap \cF(m)^c\cap \cI(d(m))^c\) which then finishes \ref{stepD}. On this event, there exists some path of length at least \(\kappa \log(m)\) where no vertex of the path has ball volume larger than \(s_m\) or mark smaller than \(s_m^{-1/\gamma}\), respectively. Hence, by~\eqref{eq:pathsBoundI}
		\begin{equation*}
			\begin{aligned}
				\P_o\big(\calG(m)\cap\cF(m)^c\cap \cI(d(m))^c\big) & \leq \sum_{n\geq \kappa \log(m)} \sum_{k=1}^n\E_o\Big[ \sum_{\substack{\x_1,\dots,\x_k \in\cX \\ u_i>s_m^{-1/\gamma} \, \forall i}}^{\neq} \P_{\x_0,\dots,\x_k}(\x_0\xleftrightarrow[\x_0,\dots,\x_k]{n}\x_k \text{ in }\widehat{\cG}^\beta_{\x_0,\dots,\x_k})\Big].
			\end{aligned}
		\end{equation*}
		Using again Mecke's equation and the skeleton strategy~\eqref{eq:pathsBoundII} once more, we infer
		\begin{equation*}
			\begin{aligned}
				\sum_{k=1}^n\E_o\Big[ & \sum_{\substack{\x_1,\dots,\x_k \in\cX \\ u_i>s_m^{-1/\gamma} \, \forall i}} \P_{\x_0,\dots,\x_k}(\x_0\xleftrightarrow[\x_0,\dots,\x_k]{n}\x_k \text{ in }\widehat{\cG}^\beta_{\x_0,\dots,\x_k})\Big] \\
				& \leq \beta^n \sum_{k=1}^n \sum_{h = 0}^k \binom{n}{k} \widehat{C}^{n-k} \big(\tfrac{\omega_d \delta}{\delta-1}\big)^k \hspace{-0.5cm} \int\limits_{\substack{u_0>u_1>\dots>u_h>s_m^{-1/\gamma}\\ u_h<u_{h+1}<\dots< u_k }} \hspace{-1cm} \d u_0\cdots\d u_k \ \prod_{j=1}^k g_{\gamma,\nicefrac{\gamma}{\delta}}(u_{j-1},u_j)^{-1} \\
				& \leq \big(\tfrac{\delta}{\delta-\gamma}\big)^2\big(\tfrac{\delta-\gamma(\delta+1)}{\delta}\big)^2(1 \vee m^{-(1-2\gamma)\zeta/\gamma}) (n+1) \big(\tfrac{\beta}{\beta_0}\big)^n,
				%\big(\beta \tfrac{\omega_d \delta^2(2^{d\delta+3}+1)}{(\delta-1)(\delta-\gamma(\delta+1))}\big)^n, 
			\end{aligned}
		\end{equation*}
		where we performed the same calculations as above in~\eqref{eq:MainThmLGupperHInBetweenL} to~\eqref{eq:MainThmLGupperHResult1} to bound the integral. Since \(\beta<\beta_0\), we have by our choice of \(\kappa\)
		\begin{equation*}
			\begin{aligned}
				\sum_{n\geq \kappa\log(m)}(n+1)(\tfrac{\beta}{\beta_0})^n &\leq \int\limits_{\kappa\log(m)}^\infty (x+1){\rm e}^{-x\log(\beta_0/\beta)} \d x \leq \tfrac{3}{2}\tfrac{\gamma\vee (1-\gamma)}{\gamma(\log\beta_0-\log\beta)^2}\log(m) m^{-(\gamma\vee (1-\gamma))\zeta/\gamma}.
			\end{aligned}
		\end{equation*}
		Therefore, 
		\begin{equation*}
			\begin{aligned}
				\P_o\big(\calG(m)\cap \cF(m)^c \cap \cI(d(m))^c\big) &  \leq  \big(\tfrac{\delta}{\delta-\gamma}\big)^2\big(\tfrac{\delta-\gamma(\delta+1)}{\delta}\big)^2 (1\vee m^{-(1-2\gamma)\zeta/\gamma}) \sum_{n\geq \kappa\log(m)} (n+1)\big(\tfrac{\beta}{\beta_0}\big)^n \\
				& \leq \tfrac{3}{2}\big(\tfrac{\delta}{\delta-\gamma}\big)^2\big(\tfrac{\delta-\gamma(\delta+1)}{\delta}\big)^2 \tfrac{\gamma\vee (1-\gamma)}{\gamma(\log\beta_0-\log\beta)^2} \log(m) m^{-(1-\gamma)\zeta/\gamma}.
			\end{aligned} 
		\end{equation*}
		Hence, we set
		\begin{equation}\label{eq:MainThmLGupperConst}
			C_6 = 
				\begin{cases}
 					2\big(\frac{(\gamma \vee (1-\gamma))\zeta}{\gamma(\log\beta_0 -\log \beta)}\big)^{d(\delta-1)} C_6^{(2)}, & \text{if } d(\delta-1)>1, \\
 					\tfrac{3}{2}\big(\frac{(\gamma \vee (1-\gamma))\zeta}{\gamma(\log\beta_0 -\log \beta)} C_6^{(2)} + \big(\tfrac{\delta}{\delta-\gamma}\big)^2\big(\tfrac{\delta-\gamma(\delta+1)}{\delta}\big)^2 \tfrac{\gamma\vee 1-\gamma}{(\log\beta_0-\log\beta)^2}\big), & \text{if } d(\delta-1)=1, \text{ and } \\
 					2\big(\tfrac{\delta}{\delta-\gamma}\big)^2\big(\tfrac{\delta-\gamma(\delta+1)}{\delta}\big)^2 \tfrac{\gamma\vee 1-\gamma}{\gamma(\log\beta_0-\log\beta)^2}, & \text{if } d(\delta-1)<1.
 				\end{cases}
		\end{equation}
		to infer 
		\[
			\P_o(\M_\beta>m) \leq C_6 \log(m)^{1\vee d(\delta-1)}m^{-(1-\gamma)\zeta/\gamma}	
		\]
		for large enough \(m\), concluding the proof.
	\end{proof}

	%%%%%%%%%%%%%%%%%%%%%%%%%%
	%%%%%% Part (ii) %%%%%%%%%
	%%%%%%%%%%%%%%%%%%%%%%%%%%
	\subsubsection{The upper bound of Part~(ii)}\label{sec:MainThmBC}
	We shortly explain where we have to adapt our proofs to obtain the upper bound in the boundary case.
	\begin{prop}\label{prop:mainThm(ii)}
		Consider the soft Boolean model \(\cG^\beta_o=\cG^{\beta,\gamma,0,\delta}(\xi_o)\) with \(\delta>1\), {\(\delta=1/\gamma-1\)}, and \(\beta<\beta_0\). Then, there exists \(M>1\) such that, for all \(m>M\), we have
		\[
			\P_\beta(\M_\beta>m)\leq C_6 \log(m) m^{1-\delta},
		\]
		where \(C_4\) is given below in~\eqref{eq:MainThmBCupperConst}.
	\end{prop}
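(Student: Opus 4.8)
The plan is to re-run the direct path-counting estimate of Proposition~\ref{prop:mainThm(i)upperBound} almost verbatim, after first removing -- exactly as at the beginning of the proof of Proposition~\ref{prop:mainThm(iii)upperBound} -- the event $F(m)$ that the component of the origin contains a powerful vertex. The only genuinely new ingredient is an observation about a single mark integral: at the boundary value $\gamma=1/(\delta+1)$ the exponent $-\gamma-\gamma\delta=-\gamma(\delta+1)$ appearing in the ``$\ell\in\{h,h+1\}$'' part of the mark integral in the proof of Proposition~\ref{prop:mainThm(i)upperBound} is exactly $-1$, so that the factor $\int_0^{u_{h-1}}u_h^{-1}\,\d u_h$ -- which diverges in the notation of~\eqref{eq:Bound(i)GammaCondition} -- is, on $F(m)^c$, automatically truncated at $s_m=m^{-\zeta}$ and becomes $\int_{s_m}^{u_{h-1}}u_h^{-1}\,\d u_h\le\log(1/s_m)=\zeta\log m$. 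This is precisely the origin of the logarithmic correction in the statement.

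In more detail, I would first record that at $\gamma=1/(\delta+1)$ one has $\zeta=(\delta^2-1)/\delta$, hence $-(1-\gamma)\zeta=1-\delta$, $\zeta>\delta-1$, and $\delta-\gamma(\delta+1)=\delta-1>0$, so that $\beta_0>0$ and all constants below remain finite. Splitting
\[
	\P_o(\M_\beta>m)\le\P_o(U_o\le s_m)+\P_o(F(m))+\P_o\big(\{\M_\beta>m\}\cap F(m)^c\cap\{U_o>s_m\}\big),
\]
the first term equals $s_m=o(m^{1-\delta})$, and Lemma~\ref{lem:upperPowerfulVertex} (which only requires $\gamma<\delta/(\delta+1)$) bounds the second by $C_6^{(1)}m^{1-\delta}$. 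For the third term I would argue as in Section~\ref{sec:upperBound(iii)}: on $F(m)^c\cap\{U_o>s_m\}$ every vertex of $\C_\beta$ has mark at least $s_m$, so, passing to a length-minimal shortcut-free path from $\o$ to $\B(m^{1/d})^c$ (whose skeleton therefore sits inside $\B(m^{1/d})\times(s_m,1)$ bar its endpoint) and dominating by $\widehat\cG^\beta$ through Lemma~\ref{lem:MainThmLGUpperDomModel} and~\eqref{eq:pathsBoundI}--\eqref{eq:pathsBoundII}, one reaches the same bound as in~\eqref{eq:part(i)upperBound}, but with every vertex-mark integration restricted to $(s_m,1)$; the spatial integrals and the changes of variables $z_i=\beta^{-1/d}g_{\gamma,\gamma/\delta}(u_i,u_{i-1})^{1/d}(x_i-x_{i-1})$ are unchanged.

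The crux -- and the only place where care is needed -- is then to re-examine the mark integral of Proposition~\ref{prop:mainThm(i)upperBound} at $\gamma=1/(\delta+1)$. Every exponent occurring there other than the critical one stays strictly above $-1$: $-2\gamma>-1$ because $2\gamma=2/(\delta+1)<1$, $-\gamma-\gamma/\delta=-1/\delta>-1$, and $-\gamma/\delta-\gamma\delta=-\gamma(\delta^2+1)/\delta$ obeys $1-\gamma(\delta^2+1)/\delta=(\delta-1)/(\delta(\delta+1))>0$; hence Lemma~\ref{lem:CoupledPath} and the one-variable bounds used before still apply, now over $(s_m,1)$. Only in the cases $\ell\in\{h,h+1\}$, where the long skeleton edge is incident to the minimal-mark skeleton vertex, does that vertex carry the exponent $-\gamma\delta-\gamma=-1$, and there the truncation produces the factor $\zeta\log m$ in place of the divergent integral. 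Consequently the full mark integral is bounded by $C(k+1)\,\zeta\log(m)\,\big(\tfrac{\delta}{\delta-\gamma(\delta+1)}\big)^{k}$ for a numerical $C$ and all large $m$; summing over $\ell$, $k$ and $n$ exactly as in Proposition~\ref{prop:mainThm(i)upperBound} -- the geometric series in $n$ converging since $\beta<\beta_0$ -- gives $\P_o(\{\M_\beta>m\}\cap F(m)^c\cap\{U_o>s_m\})\le C\log(m)m^{1-\delta}$, and collecting the three contributions yields $\P_o(\M_\beta>m)\le C_4\log(m)m^{1-\delta}$ with $C_4$ as in~\eqref{eq:MainThmBCupperConst}. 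I do not anticipate any real obstacle: $\gamma=1/(\delta+1)$ is exactly the value at which the two competing mechanisms -- one long edge, giving $m^{1-\delta}$, versus an intermediate powerful vertex, giving $m^{-(1-\gamma)\zeta}$ -- have the same order, and the logarithm is merely the price of the degenerate mark integral at the matching point; in particular, unlike in Part~(iii), no multi-scale scheme and no further renormalisation are needed.
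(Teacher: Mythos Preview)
Your proposal is correct and follows essentially the same route as the paper's own proof: first strip off $\{U_o\le s_m\}$ and $F(m)$ via Lemma~\ref{lem:upperPowerfulVertex} (using that $-(1-\gamma)\zeta=1-\delta$ at the boundary), then rerun the mark-integral computation of Proposition~\ref{prop:mainThm(i)upperBound} with all skeleton marks confined to $(s_m,1)$, and observe that the only place the assumption $\gamma<1/(\delta+1)$ was used is the $\ell\in\{h,h+1\}$ case~\eqref{eq:Bound(i)GammaCondition}, where the exponent $-\gamma(\delta+1)=-1$ now produces the factor $-\log s_m=\zeta\log m$. Your verification that the remaining exponents stay strictly above $-1$ is slightly more explicit than the paper's, but otherwise the arguments coincide.
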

	\begin{proof}
		First recall that for {\(\delta=1/\gamma-1\)} the exponents \(1-\delta\) and \(-(1-\gamma)\zeta/\gamma\) coincide. {Hence, we can restrict to the case that no vertex with ball volume larger than \(s_m\) is used on the considered paths, since the complementary probability is of the desired order by Lemma~\ref{lem:upperPowerfulVertex}}. We then perform the same proof as for the case {\(\delta<1/\gamma-1\)} in Proposition~\ref{prop:mainThm(i)upperBound} {but now with the additional restriction that all skeleton vertices have associated volume smaller than \(s_m\), i.e., mark in} \((s_m^{- 1/\gamma},1)\). Observe that for {\(\delta=1/\gamma-1\)} we still have \(\gamma<1/2\) and \(1-\gamma(\delta+1/\delta)>0\). Observe that we only rely in~\eqref{eq:Bound(i)GammaCondition} on the condition \(\gamma<1/(\delta+1)\). Performing the same calculations but with the lower integral bound replaced by \(s_m^{-1/\gamma}\) and using {\(-\gamma-\gamma\delta=-1\), as \(\delta=1/\gamma-1\)}, we obtain instead
		\begin{equation*}
			\begin{aligned}
				\int\limits_{u_0>\dots>u_{h-2}>s_m^{-1/\gamma}}
				&
					\hspace{-0.7cm}\d u_0\cdots \d u_{h-2}\prod_{i=0}^{h-2} u_i^{-\gamma-\gamma/\delta} \hspace{-0.15cm} \int\limits_{s_m^{-1/\gamma}}^{u_{h-2}}\d u_{h-1} \, u_{h-1}^{-2\gamma} \int\limits_{s_m^{-1/\gamma}}^{u_{h-1}}\d u_{h} \ u_{h}^{-1}
					\hspace{-0.5cm}\int\limits_{s_m^{-1/\gamma}<u_{h+1}<\dots<u_{k}} \hspace{-0.7cm}\d u_{h+1}\cdots\d u_k\prod_{i=h+1}^{k} u_i^{-\gamma-\gamma/\delta}
				\\ & 
					\leq -\log(s_m^{-1/\gamma})\frac{1}{1-2\gamma}\Big(\frac{\delta}{\delta-\gamma(\delta+1)}\Big)^k,
			\end{aligned}
		\end{equation*}
		Due to the additional \(-\log(s_m^{-1/\gamma})=\zeta\log(m)/\gamma\) term, this summand is now clearly the dominant one in the whole sum. Hence, finalising the proof from here as done in Proposition~\ref{prop:mainThm(i)upperBound}, we obtain
		\[
			\P_o(\M_\beta>m)\leq C_4 \log(m) m^{1-\delta},
		\]
		where
		\begin{equation}\label{eq:MainThmBCupperConst}
			C_4 := \tfrac{2(\delta^2-1)(\delta+1)\beta^{\delta-1}}{\delta(\delta-1)}\sum_{n\in\N}(n+1)n^{d\delta} (\tfrac{\beta}{\beta_0})^n,
		\end{equation}
		as \(\zeta/\gamma=(\delta^2-1)/\delta\) and \((1-2\gamma)^{-1}=(\delta+1)/(\delta-1))\). This concludes the proof.
	\end{proof}

	%%%%%%%%%%%%%%%%%%%%%%%%%%
	%%%%%%% gamma < 1/2 %%%%%%
	%%%%%%%%%%%%%%%%%%%%%%%%%%
	\subsubsection{The special case \(\gamma<1/2\)} \label{sec:gammaSmall}
	The final result of this section is the following lemma, showing that the logarithmic term in the upper bound of Theorem~\ref{thm:Main} Part (iii) can be omitted if \(\gamma<1/2\) at least for a changed intensity threshold, as discussed in Section~\ref{sec:openProblems}. The proof of Theorem~\ref{thm:UpperBoundSmallGamma} follows immediately by summing the following lemma.

	\begin{lemma}\label{lem:PathBoundUpperInterGamma}
		Define \(K=\tfrac{\omega_d \delta}{\delta-1}\big(\tfrac{1}{1-2\gamma}+\tfrac{1}{1-\gamma}\big)\). Let \(\beta<K^{-1}\), \(\gamma<1/2\) and \(\delta>1\).  Then, there exist constants \(K' >0\) such that, for all \(n\in\N\), we have 
		\begin{enumerate}[(i)]
			\item for \(\delta<1/\gamma-1\)
				\begin{equation*}
					\P_o\big(\exists \y\colon |y|^d>m, u_y>s_m \text{ and }\0\xleftrightarrow[B(m^{1/d})]{n}\y \text{ in }\cG^{\beta}_\0\big) \leq K'\beta^{\delta-1}n^{d\delta+1}(\beta K)^n \, 
						m^{1-\delta},
				\end{equation*}
			\item for \(1<1/\gamma-1<\delta\)
				\begin{equation*}
					\begin{aligned}
						\P_o\big(\exists \y\colon |y|^d>m, u_y>s_m \text{ and }\0\xleftrightarrow[B(m^{1/d})]{n}\y \text{ in }\cG^{\beta}_\0\big) \leq K'\beta^{\delta-1}n^{d\delta+1}(\beta K)^n \, 
						m^{-(1-\gamma)\zeta/\gamma}. 			
					\end{aligned}
				\end{equation*}
		\end{enumerate}
	\end{lemma}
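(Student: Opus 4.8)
The plan is to bound, for each fixed path length $n$, the expected number of shortcut-free paths of length $n$ that start at the origin, stay in $\B(m^{1/d})$ except for the last vertex which leaves the ball, and use only vertices with mark larger than $s_m$. This is exactly the type of truncated moment computation carried out in the proof of Proposition~\ref{prop:mainThm(i)upperBound}, so the skeleton machinery of Section~\ref{sec:MainThmLGSkeleton} applies verbatim: dominate $\cG^\beta$ by $\widehat{\cG}^\beta$ via Lemma~\ref{lem:MainThmLGUpperDomModel}(i), apply the BK inequality~\eqref{eq:BK} and the $n$-connection bound of Lemma~\ref{lem:MainThmLGUpperDomModel}(ii) to reduce to an integral over the $k$ skeleton vertices only, as in~\eqref{eq:pathsBoundII}. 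The one genuinely new feature compared to Part~\textit{(i)} is that the vertex marks are restricted to the interval $(s_m,1)$ rather than $(0,1)$, which is what produces the improvement from the constant $\widehat C$ (and $\beta_0$) to the better constant $K$ (and $K^{-1}$) — the mark truncation removes precisely the divergence at $0$ that forced the worse threshold.

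First I would set up, for a fixed skeleton length $k\le n$, the sum over the location of the minimum-mark vertex $h$ and over the index $\ell$ where the long edge sits (either $\ell\in\{1,\dots,k-1\}$ with $|x_\ell-x_{\ell-1}|^d > m/k^d$, or the final edge $|x_k-x_{k-1}|^d > |x_k|^d/k^d$), exactly mirroring~\eqref{eq:part(i)upperBound}. After the change of variables $z_i = \beta^{-1/d} g_{\gamma,\gamma/\delta}(u_i,u_{i-1})^{1/d}(x_i-x_{i-1})$ this integrates the spatial part down to a factor $\omega_d m^{1-\delta}$ (from the long edge, using $\rho(\cdot)\le \beta^\delta g^{-\delta} m^\delta$ on that edge) times $(\omega_d\delta/(\delta-1))^{k-1}$ from the remaining $k-1$ edges, leaving the mark integral
\[
\int_{(s_m,1)^{k+1}, \text{ skeleton}} \bigotimes_{j=0}^k \d u_j\; g_{\gamma,\gamma/\delta}(u_\ell,u_{\ell-1})^{-\delta}\prod_{i\ne\ell} g_{\gamma,\gamma/\delta}(u_i,u_{i-1})^{-1}.
\]
Here the key arithmetic is that, restricted to $(s_m,1)$, the one-variable integrals $\int_{s_m}^{1} u^{-2\gamma}\d u$ and $\int_{s_m}^{1} u^{-\gamma}\d u$ are bounded by $\tfrac{1}{1-2\gamma}$ and $\tfrac{1}{1-\gamma}$ respectively when $\gamma<1/2$ (no $\log$ since $\gamma\neq 1/2$), while the "double-exponent" factor $u^{-\gamma(\delta+1/\delta)}$ or $u^{-\gamma-\gamma\delta}$ coming from the long edge at $u_\ell$ is handled by splitting on the sign of $1-\gamma\delta$ and of $1-\gamma(\delta+1/\delta)$: in Part~\textit{(i)} ($\gamma<1/(\delta+1)$) these exponents are integrable on $(0,1)$ and give an $m$-independent constant, producing the clean $m^{1-\delta}$; in Part~\textit{(ii)} ($1/(\delta+1)<\gamma<1/2$) they are not integrable at $0$ but the truncation at $s_m$ yields a factor $s_m^{-(\gamma\delta-1)} = m^{\zeta(\gamma\delta-1)}$, and a short computation $1-\delta+\zeta(\gamma\delta-1) = -(1-\gamma)\zeta$ (using $\zeta=(\delta-1)/(\gamma\delta)$) gives exactly the claimed exponent $m^{-(1-\gamma)\zeta}$. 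Lemma~\ref{lem:CoupledPath} controls the product of the remaining $u_j^{-\gamma-\gamma/\delta}$ factors by $(\delta/(\delta-\gamma(\delta+1)))^{\text{(number of such factors)}}$, and collecting the combinatorial prefactors $\binom{n}{k}$, the $(\beta\widehat C)^{n-k}$, the $k^{d\delta}$ from the $1/k$ losses, and the sum over $h$ and $\ell$ (contributing at most $(k+1)^2\le (n+1)^2$) one arrives at a bound of the form $K'\beta^{\delta-1} n^{d\delta+1}(\beta K)^n$ times the appropriate power of $m$, with $K$ as defined in the statement.

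The main obstacle — though it is more bookkeeping than conceptual — is tracking which of the one-variable mark integrals near $u_\ell$ (and near $u_h$, when $\ell$ and $h$ coincide or are adjacent) actually touches the truncation and hence contributes the $s_m$ power, versus which stays bounded; one must check every relative position of $\ell$ and $h$ (the cases $\ell\in\{1,\dots,h-1\}$, $\ell=h$, $\ell=h+1$, $\ell\in\{h+2,\dots,k\}$, and the degenerate endpoint cases $h\in\{0,k\}$) and verify that the worst case still yields the single power $s_m^{-(\gamma\delta-1)}$ and no worse, so that the exponent of $m$ is exactly $-(1-\gamma)\zeta$ in Part~\textit{(ii)} and there is no stray extra power of $s_m$ that would change it. Since the constant $K$ is obtained by bounding each factor by $\tfrac{1}{1-2\gamma}+\tfrac{1}{1-\gamma}$ rather than extracting the sharper per-edge constant, the final summation over $n$ converges precisely when $\beta K<1$, i.e. $\beta < K^{-1}$, which is the threshold in the statement; Theorem~\ref{thm:UpperBoundSmallGamma} then follows by summing the geometric-times-polynomial series over $n$.
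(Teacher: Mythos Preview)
Your approach is genuinely different from the paper's, and the claimed constant $K$ does not fall out of it. The paper does \emph{not} use the skeleton machinery for this lemma at all: the sentence introducing Lemma~\ref{lem:WholePathBound} says explicitly that the bound is ``for all paths rather than only those with skeleton structure as we no longer make use of the latter.'' The paper works directly in $\cG^\beta$ with the kernel $g_{\gamma,0}(s,t)=(s\wedge t)^\gamma$, writes the first-moment bound over \emph{all} $n$ path vertices (no sum over skeleton length $k$, no domination by $\widehat{\cG}^\beta$, no $\widehat C$), and then controls the mark integral $\int_{(s_m,1)^{n+1}}\prod_j (u_{j-1}\wedge u_j)^{-\gamma}$ via Lemma~\ref{lem:WholePathBound}. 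That lemma is the source of the factor $\big(\tfrac{1}{1-2\gamma}+\tfrac{1}{1-\gamma}\big)$ per step, which combined with the spatial factor $\tfrac{\omega_d\delta}{\delta-1}$ gives exactly $K^n$; the long edge at index $\ell$ then contributes $(u_{\ell-1}\wedge u_\ell)^{-\gamma\delta}$ instead, and~\eqref{eq:MainThmLGupperUseOfBound} extracts the extra $m^{-\zeta(1-\gamma(\delta+1))}$ for Part~\textit{(ii)}. This only works because $\gamma<1/2$ makes the naive path count finite without any skeleton reduction.

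Your route, by contrast, collects $\binom{n}{k}(\beta\widehat C)^{n-k}(\tfrac{\omega_d\delta}{\delta-1})^k$ and skeleton mark integrals bounded by Lemma~\ref{lem:CoupledPath}; summing over $k$ yields $\big(\widehat C + \tfrac{\omega_d\delta^2}{(\delta-1)(\delta-\gamma(\delta+1))}\big)^n=\beta_0^{-n}$, not $K^n$. So your proof would establish the bound with $(\beta/\beta_0)^n$ and threshold $\beta<\beta_0$, which is a correct statement but not the stated lemma. Your sentence ``the mark truncation removes precisely the divergence at $0$ that forced the worse threshold'' is the misconception: the $2^{d\delta+3}$ in $\widehat C$ comes from the two-connection estimate in Lemma~\ref{lem:MainThmLGUpperDomModel}, not from any mark divergence, and truncating at $s_m$ does nothing to it. The improvement to $K^{-1}$ is bought by abandoning the skeleton entirely and using Lemma~\ref{lem:WholePathBound} instead.
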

	
	Proving the lemma requires bounds on the expected number of paths as we no longer apply the skeleton strategy. 
		
	\begin{lemma}\label{lem:WholePathBound}
		Let \(\gamma<1/2\) and \(u_0\in(0,1)\). 
	\begin{enumerate}[(a)]
		\item For all \(k\in\N\), we have
    		\begin{equation*}
        		\int\limits_0^1 \d u_1 \cdots \int\limits_0^1 \d u_k \ \prod_{j=1}^k (u_{j-1}\wedge u_{j})^{-\gamma} \leq \tfrac{2-\gamma}{1-\gamma}\big(\tfrac{1}{1-2\gamma}+\tfrac{1}{1-\gamma}\big)^{k-1}u_0^{-\gamma}. 
    		\end{equation*}
    	\item Further, for all \(k\in\N\), we have
   			\[
   				\int\limits_0^1 \d u_1 \cdots \int\limits_0^1 \d u_k \ \Big(\prod_{j=1}^k (u_{j-1}\wedge u_{j})^{-\gamma}\Big) u_k^{-\gamma} \leq \big(\tfrac{1}{1-2\gamma}+\tfrac{1}{1-\gamma}\big)^{k-1} u_0^{-\gamma}.
   			\]
	\end{enumerate} 
	\end{lemma}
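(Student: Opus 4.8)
The plan is to prove parts (a) and (b) together by induction on $k$, letting the two estimates feed into each other; write $I_k(u_0)$ for the left-hand side of (a) and $J_k(u_0)$ for that of (b). The engine of the induction is the evaluation of two elementary one-dimensional integrals. Splitting at $v=u$ and using that $\gamma<1/2$ makes $\int_0^u v^{-2\gamma}\,\d v=u^{1-2\gamma}/(1-2\gamma)$ finite, one obtains, for every $u\in(0,1)$,
\begin{align*}
  \int_0^1 (u\wedge v)^{-\gamma}\,\d v &= \tfrac{\gamma}{1-\gamma}\,u^{1-\gamma}+u^{-\gamma}, \\
  \int_0^1 (u\wedge v)^{-\gamma}v^{-\gamma}\,\d v &= \tfrac{\gamma}{(1-2\gamma)(1-\gamma)}\,u^{1-2\gamma}+\tfrac1{1-\gamma}\,u^{-\gamma}.
\end{align*}
Since $u\le 1$ and $1-\gamma>0$ we have $u^{1-\gamma}\le 1$ and $u^{1-2\gamma}\le 1$; these are the only two facts about $u$ being bounded by $1$ that enter, and together with $\gamma<1/2$ they keep all constants finite. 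The two displayed evaluations also serve as the base case $k=1$.

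For the inductive step I would always integrate out the innermost variable $u_k$. In (b), $\int_0^1 (u_{k-1}\wedge u_k)^{-\gamma}u_k^{-\gamma}\,\d u_k$ is the second integral above evaluated at $u=u_{k-1}$; its first summand $\tfrac1{1-\gamma}u_{k-1}^{-\gamma}$ leaves behind exactly the integrand of (b) on the remaining $k-1$ variables, while its second summand $\tfrac{\gamma}{(1-2\gamma)(1-\gamma)}u_{k-1}^{1-2\gamma}$, estimated via $u_{k-1}^{1-2\gamma}\le 1$, leaves the integrand of (a) on $k-1$ variables. This gives
\begin{equation*}
  J_k(u_0)\ \le\ \tfrac1{1-\gamma}\,J_{k-1}(u_0)+\tfrac{\gamma}{(1-2\gamma)(1-\gamma)}\,I_{k-1}(u_0),
\end{equation*}
and the identical manipulation in (a), using the first displayed integral and $u_{k-1}^{1-\gamma}\le 1$, gives
\begin{equation*}
  I_k(u_0)\ \le\ J_{k-1}(u_0)+\tfrac{\gamma}{1-\gamma}\,I_{k-1}(u_0).
\end{equation*}
Substituting the induction hypotheses $J_{k-1}(u_0)\le(\tfrac1{1-2\gamma}+\tfrac1{1-\gamma})^{k-2}u_0^{-\gamma}$ and $I_{k-1}(u_0)\le\tfrac{2-\gamma}{1-\gamma}(\tfrac1{1-2\gamma}+\tfrac1{1-\gamma})^{k-2}u_0^{-\gamma}$ and collecting terms, the induction closes once one checks that the coefficient produced in front of $(\tfrac1{1-2\gamma}+\tfrac1{1-\gamma})^{k-2}u_0^{-\gamma}$ is at most $\tfrac1{1-2\gamma}+\tfrac1{1-\gamma}$ for $J_k$, and at most $\tfrac{2-\gamma}{1-\gamma}(\tfrac1{1-2\gamma}+\tfrac1{1-\gamma})$ for $I_k$; after clearing the positive denominators these are elementary polynomial inequalities in $\gamma$.

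The whole difficulty lies in this last constant bookkeeping, which is the step I would spend the most care on. Two things matter. First, one must keep the $u^{1-2\gamma}$ (respectively $u^{1-\gamma}$) summand of the one-variable integral alive and route it into the \emph{companion} integral through $u^{1-2\gamma}\le 1$, rather than collapsing it into a multiple of $u^{-\gamma}$: a crude collapse at every stage compounds a factor strictly larger than $1$ and degrades the geometric exponent from $k-1$ to $k$. Second, running (a) and (b) in tandem, rather than separately, is precisely what allows the $u^{1-2\gamma}\le 1$ step to be discharged. Confirming that the resulting polynomial inequalities in $\gamma$ hold on all of $(0,1/2)$ is the one non-routine computation, and it is where the hypothesis $\gamma<1/2$ is used most sharply.
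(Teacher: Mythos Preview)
The paper takes a simpler route: it peels off the \emph{outer} variable $u_1$, not the inner $u_k$. Applying the induction hypothesis for (a) to the integral over $u_2,\dots,u_k$ with $u_1$ in the role of the starting point produces a factor $u_1^{-\gamma}$, and what remains is the single integral $\int_0^1(u_0\wedge u_1)^{-\gamma}u_1^{-\gamma}\,\d u_1\le B\,u_0^{-\gamma}$ with $B=\tfrac1{1-2\gamma}+\tfrac1{1-\gamma}$, obtained by one split at $u_1=u_0$. This closes (a), and (b) is handled by the identical step; no coupling between (a) and (b) and no polynomial inequality in $\gamma$ is needed.

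Your coupled scheme has a genuine gap in part (b). Substituting the induction hypotheses into your recursion $J_k\le\tfrac1{1-\gamma}J_{k-1}+\tfrac{\gamma}{(1-2\gamma)(1-\gamma)}I_{k-1}$ and demanding that the result not exceed $B^{k-1}u_0^{-\gamma}$ reduces, after clearing denominators, to $2\gamma^2-4\gamma+1\ge 0$; this fails on $\bigl(1-\tfrac{1}{\sqrt2},\tfrac12\bigr)\approx(0.293,0.5)$, so the induction does not close in the full range. The base case already signals the problem: from your own second display, $J_1(1)=\tfrac1{1-2\gamma}>1$, so the claimed bound $J_1(u_0)\le u_0^{-\gamma}$ is false at $u_0=1$. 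In fact the paper's proof of the $k=1$ case of (b) actually establishes $J_1(u_0)\le B\,u_0^{-\gamma}$, so the stated exponent $k-1$ in (b) is a typo for $k$ (and only this weaker form is used downstream). With that correction your coupled recursion can be pushed through, but the paper's decoupled argument avoids the whole constant-chasing exercise.
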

	\begin{proof}
		We prove the Statement (a) by induction. For \(k=1\), we have
		\begin{equation*}
    		\begin{aligned}
       			\int\limits_0^1 \d u_1 \ (u_{0}\wedge u_1)^{-\gamma} & \leq \int\limits_0^{u_0} u_1^{-\gamma}\d u_1 + u_0^{-\gamma}\leq \tfrac{2-\gamma}{1-\gamma} u_0^{-\gamma}.
    		\end{aligned}
		\end{equation*}
		For \(k\geq 2\) we have by using the induction hypothesis 
		\begin{equation*}
    		\begin{aligned}
       			\int\limits_0^1 \d u_1\cdots \int\limits_0^1 \d u_k \ \prod_{j=1}^k (u_{j-1}\wedge u_j)^{-\gamma} 
        		& \leq \tfrac{2-\gamma}{1-\gamma}\big(\tfrac{1}{1-2\gamma}+\tfrac{1}{1-\gamma}\big)^{k-2}\int\limits_0^1 u_1^{-\gamma}(u_0\wedge u_1)^{-\gamma}\d u_1 \\
       			& = \tfrac{2-\gamma}{1-\gamma}\big(\tfrac{1}{1-2\gamma}+\tfrac{1}{1-\gamma}\big)^{k-2}\Big(\int\limits_0^{u_0} u_1^{-2\gamma}\d u_1 + \int\limits_{u_0}^1 u_0^{-\gamma}u_1^{-\gamma}\d u_1 \Big) \\
      			& \leq \tfrac{2-\gamma}{1-\gamma}\big(\tfrac{1}{1-2\gamma}+\tfrac{1}{1-\gamma}\big)^{k-2}\big(\tfrac{u_0^{1-2\gamma}}{1-2\gamma}+\tfrac{u_0^{-\gamma}}{1-\gamma}\big) \\
       			&\leq \tfrac{2-\gamma}{1-\gamma}\big(\tfrac{1}{1-2\gamma}+\tfrac{1}{1-\gamma}\big)^{k-1} u_0^{-\gamma}.
    		\end{aligned}
		\end{equation*}
		For statement (b), we only show the induction start \(k=1\) as the induction step works analogously as above. We have
		\begin{equation*}
			\begin{aligned}
				\int\limits_0^1 \d u_1 (u_0\wedge u_1)^{-\gamma} u_1^{-\gamma} & = \int\limits_{0}^{u_0}\d u_1 \ u_1^{-2\gamma} +  \int\limits_{u_0}^1 \d u_1 \ u_0^{-\gamma} u_1^{-\gamma} \leq \big(\tfrac{1}{1-2\gamma}+\tfrac{1}{1-\gamma}\big)u_0^{-\gamma},
			\end{aligned}
		\end{equation*}
		as desired.
	\end{proof}

	\begin{proof}[Proof of Lemma \ref{lem:PathBoundUpperInterGamma}.]
		We restrict ourselves to the proof of Part~(ii). The adaptation to the easier case of Part~(i) is straightforward. With Lemma~\ref{lem:WholePathBound} it is also straight forward to adapt the proof of Lemma~\ref{lem:upperPowerfulVertex} to the case \(\gamma<1/2\) and \(\beta<K^{-1}\) to derive
		\begin{equation*}
			\begin{aligned}
				\sum_{n=1}^\infty\P_o\big(\exists \x\colon u_x^{-\gamma}>s_m \text{ and }\o\xleftrightarrow[\R^d\times(s_m^{-1/\gamma},1)]{n}\x\big) \leq m^{-(1-\gamma)\zeta/\gamma} K' \tfrac{1}{1-\beta K}.
			\end{aligned}
		\end{equation*}
		Hence, from now on, we work on the assumption that all considered vertices have mark no smaller than \(s_m^{-1/\gamma}\). On the path of length \(n\), either one of the intermediate edges has length at least \(m^{1/d}/n\), or the last edge is longer than \(|y|/n\). Hence, following the arguments of the proof of Proposition~\ref{prop:mainThm(i)upperBound}, we infer
		\begin{equation*}
			\begin{aligned}
				&\P_o\big(\exists \y\colon |y|^d>m, u_y^{-\gamma}<s_m \text{ and }\0\xleftrightarrow[B(m^{1/d})\times(s_m^{-1/\gamma},1)]{n}\y \text{ in }\cG^{\beta}_o\big) \\
				& \ \leq \sum_{\ell=1}^{n-1} \beta^{\delta}n^{d\delta}\big(\beta \omega_d\tfrac{\delta}{\delta-1}\big)^{n-1}m^{1-\delta} \int\limits_{(s_m^{-1/\gamma},1)^{n+1}} \hspace{-0.3cm}\d u_0\cdots\d u_n \, (u_{\ell-1}\wedge u_\ell)^{-\gamma\delta} \prod_{\substack{j=1 \\ j\neq \ell}}^n (u_{j-1}\wedge u_j)^{-\gamma} \\
				& \qquad + \beta^{\delta}n^{d\delta}\big(\beta \omega_d\tfrac{\delta}{\delta-1}\big)^{n-1}\int\limits_{|x_n|^d>m} \d x_n \, |x_n|^{-d\delta} \int\limits_{(s_m^{-1/\gamma},1)^{n+1}}\hspace{-0.3cm}\d u_0\cdots\d u_n \, (u_{n-1}\wedge u_n)^{-\gamma\delta} \prod_{\substack{j=1}}^{n-1} (u_{j-1}\wedge u_j)^{-\gamma} \\
				& \leq  \beta^{\delta-1}n^{d\delta}\big(\beta \omega_d\tfrac{\delta}{\delta-1}\big)^{n}m^{1-\delta} \sum_{\ell=1}^{n} \int\limits_{(s_m^{-1/\gamma},1)^{n+1}}\hspace{-0.3 cm}\d u_0\cdots\d u_n \, (u_{\ell-1}\wedge u_\ell)^{-\gamma\delta} \prod_{\substack{j=1 \\ j\neq \ell}}^n (u_{j-1}\wedge u_j)^{-\gamma},
			\end{aligned}
		\end{equation*}
		{using \(\omega_d\delta/(\delta-1)>1\).} We have using Lemma~\ref{lem:WholePathBound}
		\begin{equation*}
			\begin{aligned}
				\int\limits_{(s_m^{-1/\gamma},1)^{n+1}} & \d u_0\cdots\d u_n \, (u_{\ell-1}\wedge u_\ell)^{-\gamma\delta} \prod_{\substack{j=1 \\ j\neq \ell}}^n (u_{j-1}\wedge u_j)^{-\gamma} \\ 
				& \leq \tfrac{2-\gamma}{1-\gamma}\big(\tfrac{1}{1-2\gamma}+\tfrac{1}{1-\gamma}\big)^{n-\ell-1}\int\limits_{(s_m^{-1/\gamma},1)^{\ell+1}} \d u_0\cdots\d u_{\ell} \, {u_\ell^{-\gamma}}(u_{\ell-1}\wedge u_\ell)^{-\gamma\delta} \prod_{\substack{j=1}}^{\ell-1} (u_{j-1}\wedge u_j)^{-\gamma}.
			\end{aligned}
		\end{equation*}
		Since \(1/\gamma-1>\delta\), we infer
		\begin{equation}\label{eq:MainThmLGupperUseOfBound}
    		\begin{aligned}
       			\int\limits_{s_m^{-1/\gamma}}^1 & \d u_{\ell-1}(u_{\ell-1}\wedge u_{\ell-2})^{-\gamma}\Big[\int\limits_{s_m^{-1/\gamma}}^{u_{\ell-1}} \d u_\ell \ u_{\ell}^{-\gamma(\delta+1)} +\int\limits_{u_{\ell-1}}^1 \d u_\ell \ u_\ell^{-\gamma\delta} u_{\ell-1}^{-\gamma}  \Big] \\
       			& \leq \int\limits_{s_m^{-1/\gamma}}^1 \d u_{\ell-1}(u_{\ell-1}\wedge u_{\ell-2})^{-\gamma}\Big[ \tfrac{m^{-(1-\gamma(\delta+1))\zeta/\gamma}}{\gamma(\delta+1)-1} + \tfrac{u_{\ell-1}^{1-\gamma(\delta+1)}\vee u_{\ell-1}^{-\gamma}}{|1-\gamma\delta|}\Big]\\
       			&\leq \big(\tfrac{1}{\gamma(\delta+1)-1}+\tfrac{1}{|1-\gamma\delta|}\big)m^{-\zeta(1-\gamma(\delta+1))/\gamma}\int\limits_{s_m^{-1/\gamma}}^1 \d u_{\ell-1} (u_{\ell-1}\wedge u_{\ell-2})^{-\gamma} {u_{\ell-1}^{-\gamma}}. 
    		\end{aligned}
		\end{equation}
		{Here, we have again excluded the \(1=\gamma\delta\) case. However, it can be dealt with just like above in Lemma~\ref{lem:MainThmLGupperH}.} Using Lemma~\ref{lem:WholePathBound} once more, we hence get
		\[
			\int\limits_{(s_m^{-1/\gamma},1)^{n+1}} \d u_0\cdots \d u_n \, (u_{\ell-1}\wedge u_\ell)^{-\gamma\delta} \prod_{\substack{j=1 \\ j\neq \ell}}^n (u_{j-1}\wedge u_j)^{-\gamma}\leq K' \big(\tfrac{1}{1-2\gamma}+\tfrac{1}{1-\gamma}\big)^n m^{-(1-\gamma(\delta+1))\zeta/\gamma}
		\]
		for 
		\[
			K'=\tfrac{2-\gamma}{1-\gamma}(\tfrac{1}{\gamma(\delta+1)-1}+\tfrac{1}{|1-\gamma\delta|})(\tfrac{1}{1-2\gamma}+\tfrac{1}{1-\gamma})^{-2}.
		\] 
		Therefore,
		\begin{equation*}
			\begin{aligned}
				\sum_{\ell=1}^{n} & \beta^{\delta-1}n^{d\delta}\big(\beta \omega_d\tfrac{\delta}{\delta-1}\big)^{n}m^{1-\delta} \int\limits_{(s_m^{-1/\gamma},1)^{n+1}}\d u_0\cdots \d u_n \, (u_{\ell-1}\wedge u_\ell)^{-\gamma\delta} \prod_{\substack{j=1 \\ j\neq \ell}}^n (u_{j-1}\wedge u_j)^{-\gamma} \\
				&\leq n K' \beta^{\delta-1}n^{d\delta}\big(\beta K\big)^{n}m^{-(1-\gamma)\zeta/\gamma},
			\end{aligned}
		\end{equation*}
		where \(K\) is given in the formulation of the lemma, concluding the proof of Part~(ii). The proof of Part~(i) works analogously. However, the restrictions to vertices with not too small vertex marks is no longer necessary and we repeat the same computations without this restriction for the case \(\delta<1/\gamma-1\) to obtain the desired result.
\end{proof}

%%%%%%%%%%%%%%%%%%%%%%%%%%
%%%%%% Number points %%%%%
%%%%%%%%%%%%%%%%%%%%%%%%%%
\subsection{Number of points contained in the component of the origin}\label{sec:NumberPoints}
In this final section we prove our results about the number of points in a subcritical component. Part~(ii) of Theorem~\ref{thm:NumberPoints}, stating that the expected cardinality of the component of the origin is infinite whenever the degree distribution has no second moment, is only stated for completeness and directly follows from the corresponding result for the classical Boolean model~\cite[Theorem 3.2]{MeesterRoy1996}. Hence, it suffices to prove Part~(i). Again, the lower bound \(\P_o(\scN_\beta\geq m)\geq c m^{1-1/\gamma}\) is an immediate consequence of the result for the classical Boolean model in~\cite{Gouere08,JahnelAndrasCali2022}. We however give the short proof for completeness. 

\begin{proof}[Proof of the lower bound in Theorem~\ref{thm:NumberPoints}, Part~(i).]
Let us denote by \(N(\x)\) the number of neighbours of the vertex \(\x=(x,u_x)\) in \(\cG^\beta_{\x}\) and let us write \(N^>(\x)\) for the number of neighbours of \(\x\) with mark no smaller than \(u_x\), {which are those with smaller associated ball}. It is clear that \(N^>(\x)\leq N(\x)\). Further, for given \(\x=(x,u_x)\), \(N^>(\x)\) is Poisson distributed with parameter \(\tfrac{\beta \omega_d \delta}{\delta-1}(u_x^{-\gamma}-1)\) independently from its location by~\cite[Proposition~2.1]{Lue2022}. 
Therefore, by the standard Poisson tail bound, each vertex with mark smaller than \((c2m)^{-1/\gamma}\) for \(c=(\delta-1)/(\beta \omega_d \delta)\) has at least \(m\) neighbours with exponentially small error probability. Additionally, each vertex located in \(B(m^{{1}/{d}})\) with mark smaller than \((c2m)^{-1/\gamma}\) is connected to~\(\0\). Therefore,
  \begin{equation*}
 	\begin{aligned}
 		\P_o&\big(\mathscr{N}_\beta>m\big) \\
 		&\geq \P\big(\exists \x\in\cX\cap \big(B(m^{1/d})\times(0,c(2m)^{-1/\gamma})\big)\big)- \P\big(\exists \x\in\cX\cap\big(B(m^{1/d})\times(0,c(2m)^{-1/\gamma})\big)\colon N^>(\x)<m\big) \\
 		&\geq \tfrac{c \omega_d}{2^{1+1/\gamma}} m^{1-1/\gamma},
 	\end{aligned}
 \end{equation*} 
 for all sufficiently large \(m>c^{1/\gamma}\), since 
 \begin{equation*}
 	\begin{aligned}
 		\P\big(\exists \x\in\cX\cap\big(B(m^{1/d})\times(0,c(2m)^{-1/\gamma})\big)\colon N^>(\x)<m\big) & \leq \int\limits_{|x|^d<m}\d x\int\limits_{0}^{c(2m)^{-1/\gamma}} \d u \ \P\big(N^<(\x)<m\big) \\
 		& \leq \tfrac{c \omega_d}{2^{1/\gamma}} m^{1-1/\gamma} \operatorname{Pois}_{2m-1}(m)\leq \tfrac{c \omega_d}{2^{1+1/\gamma}} m^{1-1/\gamma},
 	\end{aligned}
 \end{equation*}
 where we have written \(\operatorname{Pois}_{2m-1}\) for the cumulative distribution function of a Poisson random variable with parameter \(2m-1\). This proves the lower bound. 
 \end{proof}
 
 Consider now the scale-free percolation model corresponding to the choice of \(\alpha=\gamma\) in the interpolation kernel~\eqref{eq:InterpolKern}. That is, \(g_{\gamma,\gamma}(s,t)=s^\gamma t^{\gamma}\). Since \(g_{\gamma,0}\geq g_{\gamma,\gamma}\), we have \(E(\cG^{\beta,\gamma,0,\delta}(\xi))\subset E(\cG^{\beta,\gamma,\gamma,\delta}(\xi))\) by~\eqref{eq:edgeSet}. Put differently, the scale-free percolation model in this parametrisation contains all edges of the soft Boolean model and more. As a direct result, the lower bound for the tail probability of the cardinality of the component of the origin in the soft Boolean model is also valid for scale-free percolation. Contrarily, each upper bound for scale-free percolation is then also an upper bound in the soft Boolean model. Hence, it suffices to prove the upper bound for scale-free percolation in order to finish the proofs of the Theorems~\ref{thm:NumberPoints} and~\ref{thm:SFP}.
 
\begin{proof}[Proof of the upper bound in Theorem~\ref{thm:SFP}.]
 We couple the component of the origin to a multi-type branching process starting at the origin which we now describe. The individuals of the branching process are nodes \(\x=(x,u_x)\) that have a location in \(\R^d\) and a type \(u_x\in(0,1)\). Let \(\mathcal{Y}^{(1)}\) be a unit-intensity Poisson point process on \(\R^d\times(0,1)\times(0,1)\), independent of everything else, whose nodes we denote by \((x,u_x,v_x)\). Given the origin and its type \(\o=(o,u_o)\), the first generation consists of the points	
	\[
		\mathcal{Z}^{(1)} = \big\{(x,u)\colon \exists v \text{ s.t.\ }(x,u,v)\in \mathcal{Y}^{(1)} \text{ and }v\leq 1\wedge (\beta^{-1}u_o^{\gamma}u^{\gamma}|x|^d)^{-\delta}\big\}.
	\]
	Let us denote by \(Z^{(1)}(J)\) the number of points in \(\mathcal{Z}^{(1)}\) that have their type in \(J\subset(0,1)\). Then, given \(\o=(o,u_o)\), \(Z^{(1)}(J)\) is Poisson distributed with mean
	\begin{equation}\label{eq:expectedType}
		\E_{(o,u_o)}Z^{(1)}(J) = \int\limits_J \d u \int\limits_{\R^d} \d x \ 1 \wedge (\beta^{-1}u^\gamma  u_o^\gamma |x|^d)^{-\delta}  = \beta \tfrac{\omega_d \delta}{\delta-1} u_o^{-\gamma} \int\limits_J \d u \ u^{-\gamma}.
	\end{equation}
	For simplicity we write \(Z^{(1)}=Z^{(1)}((0,1))\) in which case we have
	\begin{equation}\label{eq:expectedGen1}
		\begin{aligned}
			\mu^{(1)}((o, u_o)) := \E_{(o,u_o)}Z^{(1)} = \beta \tfrac{\omega_d \delta}{(\delta-1)(1-\gamma)}u_o^{-\gamma}\leq \beta \tfrac{\omega_d \delta}{(\delta-1)(1-2\gamma)}u_o^{-\gamma},
		\end{aligned}
	\end{equation}
	 using \(\gamma<1/2\) in the last step. It is easy to see that the number of individuals in the first generation of this process coincides with the number of neighbours the origin has in \(\cG^{\beta,\gamma,\gamma,\delta}_\o\). To construct the second generation, let the first generation \(\{\x_1^{(1)},\dots,\x_k^{(1)}\}=\mathcal{Z}^{(1)}\) be given. For \(j\in\{1,\dots,k\}\) let \(\mathcal{Y}_{j}^{(2)}\) be a unit-intensity Poisson point process on \(\R^d\times(0,1)\times(0,1)\) independent of everything else. The children of \(\x_j^{(1)}=(x_j^{(1)},u_j^{(1)})\) are then given by
	\[
		\mathcal{Z}_j^{(2)} = \big\{(x,u)\colon \exists v\text{ s.t.\ }(x,u,v)\in{\mathcal{Y}_j^{(2)}}\text{ and } v\leq 1\wedge\big(\beta^{-1}u^{\gamma}(u_j^{(1)})^\gamma|x-x_j^{(1)}|^d\big)^{-\delta}\big\}.
	\]
	Let as above \(Z^{(2)}_j\) (resp.\ \(Z^{(2)}_j(J)\)) denote the number of children of the individual \(\x_j\) (of type in \(J\)), which again is Poisson distributed with mean \(\mu^{(1)}(\x_j)\), see~\eqref{eq:expectedGen1}. Given the whole first generation \(\mathcal{Z}^{(1)}\), the expected size of the second generation is hence given by \(\sum_{\x\in \mathcal{Z}^{(1)}}\mu^{(1)}(\x)\). Using the Markov property of this process together with Mecke's equation, we infer that the expected size of the second generation, given the origin \(\o=(o,u_o)\), is bounded by
	\begin{equation*}
		\begin{aligned}
			\mu^{(2)}((o,u_o)) & = \E_{(o,u_o)}\Big[ \sum_{\x\in \mathcal{Z}^{(1)}}\mu^{(1)}(\x)\Big] = \hspace{-0.1cm}\int\limits_0^1 \hspace{-0.1cm}\d u \hspace{-0.1cm}\int\limits_{\R^d} \d x \big(1 \wedge (\tfrac{1}{\beta}u^\gamma u_o^\gamma |x|^d)^{-\delta}\big) \mu^{(1)}((x,u))\leq \big(\beta \tfrac{\omega_d \delta}{(\delta-1)(1-2\gamma)}\big)^2 u_o^{-\gamma},
		\end{aligned}
	\end{equation*} 
	where we used~\eqref{eq:expectedGen1} and \(\gamma<1/2\). Again, it is easy to see that the branching process can be coupled with \(\cG^{\beta,\gamma,\gamma,\delta}_\o\) such that the second generation contains at least as many individuals as there are vertices at graph distance two away from \(\o\) in \(\cG^{\beta,\gamma,\gamma,\delta}_\o\). We continue constructing the subsequent generations in the obvious way. Then, for each \(n\), the \(n\)-th generation \(\mathcal{Z}^{(n)}\) contains at least as many individuals as there are vertices at graph distance \(n\) away from \(\o\) in \(\cG^\beta_\o\). Moreover, we obtain inductively by the Markov property of the process and Mecke's equation that
	\begin{equation*}
		\begin{aligned}
			\mu^{(n)}((o,u_o)) & := \E_{(o,u_o)} Z^{(n)} \leq \big(\beta \tfrac{\omega_d \delta}{(\delta-1)(1-2\gamma)}\big)^n u_o^{-\gamma}. 
		\end{aligned}
	\end{equation*}
	Choosing \(\beta<\tfrac{(\delta-1)(1-2\gamma)}{\omega_d \delta}\), the sequence \(\mu^{(n)}((o,u_o))\) is summable and hence 
	\[
		\E \scN_{\beta,\gamma,\gamma,\delta}\leq \int\limits_0^1 \d u \sum_n \mu^{(n)}(o,u_o) <\infty.
	\] 
	In particular, the constructed branching process is subcritical. Moreover, we can see in our calculations that the spatial embedding plays no particular role and only the polynomial types of the form \(u^{-\gamma}\) (i.e., the associated volume) influence the offspring distribution. Let us think in the following of the individual types as of the form \(W_x=u_x^{-\gamma}\in(1,\infty)\). By the Poisson nature of the branching process,~\eqref{eq:expectedType} and~\eqref{eq:expectedGen1} together yield that an offspring of a type-\(w\) individual has type in 	\((1,z)\) with probability
	\[
		\frac{\beta \tfrac{\omega_d \delta}{\delta-1} w \int\limits_{z^{-1/\gamma}}^1 \d u \ u^{-\gamma}}{\beta \tfrac{\omega_d \delta}{(\delta-1)(1-\gamma)}w} = 1- z^{1-1/\gamma},
	\]
	independently from the ancestor's type, for all \(z>1\). Hence, the constructed branching process has the same law as a single-type branching process with mixed-Poisson offspring distribution with mean \(\beta C W\) where \(W\) is \(\operatorname{Pareto}(1/\gamma)\) distributed and \(C=\tfrac{\omega_d \delta}{(\delta-1)(1-\gamma)}\). This reduction to a single-type branching process is a crucial feature of the underlying product structure of the kernel \(g_{\gamma,\gamma}\), see also~\cite[Chapter 3.4.3]{vdH2024}. In particular, the mixing parameter \(W'=\beta C W\) is heavy tailed with tail distribution
	\[
		\P(W'\geq w) = c w^{1-1/\gamma}, 
	\]
	for an appropriate \(c>0\). But this also implies that  a random variable \(Z\) that is distributed as mixed Poisson with mean \(W'\) is heavy tailed with tail distribution
	\[
		cz^{1-1/\gamma}\leq \P(Z\geq z) \leq C z^{1-1/\gamma}, 
	\] 
	for some constants \(c,C>0\), see~\cite[Chapter 6]{vdH2017}, which, in particular, implies \(\P(Z=z)\asymp z^{-1/\gamma}\) for all large \(z\in\N\). Summarising, the cardinality of the  component of the origin is stochastically dominated by the total progeny \(\mathcal{C}\) of a branching process with offspring distribution identically to \(Z\). Together with Dwass' Theorem~\cite{Dwass1969}, we obtain
	\begin{equation}\label{eq:Dwass}
		\begin{aligned}
		\P_o(\scN_{\beta,\gamma,\gamma,\delta} \geq m) & \leq \P(\mathcal{C}\geq m) = \sum_{k\geq m} \P(\mathcal{C}=k) = \sum_{k\geq m} \tfrac{1}{k} \P(Z_1+\dots+Z_k=k-1),
		\end{aligned}
	\end{equation}
	where \(Z_1,\dots,Z_k\) are i.i.d.\ copies of \(Z\). Finally, recall that we have already shown the subcriticality of the process which particularly implies \(\E Z_1<1\) (the integrability is due to \(\gamma<1/2\) and the boundedness is due to \(\beta\) being small enough). Hence, \(n-1>n\E Z_1\) for large enough \(n\) and as \(Z_1,Z_2,\dots\) are independent and heavy tailed with finite expectation, the \emph{single big jump paradigm}~\cite[Theorem 9.1]{DenisovEtAl2008} yields 
	\[
		\P(Z_1+\dots+Z_n\geq n-1)\sim n \P(Z_1\geq n-1) \asymp n^{{2-1/\gamma}},
	\]     
	as \(\gamma<1/2\). Again, this implies \(\P(Z_1+\dots+Z_n=n-1)\asymp n^{1-1/\gamma}\) for sufficiently large \(n\). Plugging this back into~\eqref{eq:Dwass} we obtain, for large enough \(m\), that
	\begin{equation*}
		\begin{aligned}
			\P_o(\scN_{\beta,\gamma,\gamma,\delta} \geq m) & \leq \sum_{k\geq m} \tfrac{1}{k}\P(Z_1+\dots+Z_k=k-1) \asymp \sum_{k\geq m} k^{-1/\gamma} \asymp m^{1-1/\gamma}. 
		\end{aligned}
	\end{equation*}
	This concludes the proof. 
	\end{proof}

\section*{Acknowledgement} 
This project was initiated by a research visit of LL to the University of Bath generously supported by the \emph{Federico Tonielli Award 2021} of the University of Cologne. 
BJ and LL further acknowledge the financial support of the Leibniz Association within the Leibniz Junior Research Group on \emph{Probabilistic Methods for Dynamic Communication Networks} as part of the Leibniz Competition. {Finally, we would like to thank the referees for their thorough reading of the manuscript and their suggestions that have considerably improved the discussion of our results.}
	
%%%%%%%%%%%%%%%%%%%%%%%%%%%		
%%%%%%% References %%%%%%%%
%%%%%%%%%%%%%%%%%%%%%%%%%%%
\section*{References}
\renewcommand*{\bibfont}{\footnotesize}
\printbibliography[heading = none]

%%%%%%%%%%%%%%%%%%%%%%%%%%
%%%%%% Appendix%%%%%%%%%%%
%%%%%%%%%%%%%%%%%%%%%%%%%%
%\appendix

\end{document}